\DeclareMathOperator{\sech}{sech}
\newtheorem{thm}{Theorem}[section]
\newtheorem{cor}[thm]{Corollary}
\newtheorem{lem}[thm]{Lemma}
\newtheorem{prop}[thm]{Proposition}
\theoremstyle{definition}
\newtheorem{defn}[thm]{Definition}
\theoremstyle{remark}
\newtheorem{rem}[thm]{Remark}
\newtheorem*{ex}{Example}
\numberwithin{equation}{section}
\newcommand{\Cl}{C \kern -0.1em \ell}  
\DeclareMathOperator{\Tr}{\mathrm{Tr}}
\newcommand{\BF}{\mathbb{F}}
\newcommand{\BZ}{\mathbb{Z}}
\newcommand{\BR}{\mathbb{R}}
\newcommand{\BC}{\mathbb{C}}
\newcommand{\BN}{\mathbb{N}}
\newcommand{\ed}{\end{document}}
\newcommand{\SO}{\mathrm{SO}}
\newcommand{\GO}{\mathrm{GO}}
\newcommand{\SL}{\mathrm{SL}}
\newcommand{\GL}{\mathrm{GL}}
\newcommand{\Sp}{\mathrm{Sp}}
\newcommand{\GSp}{\mathrm{GSp}}
\renewcommand{\U}{\mathrm{U}}
\newcommand{\Spin}{\mathbf{Spin}}
\newcommand{\g}{\mathfrak{g}}
\newcommand{\spin}{\mathfrak{spin}}
\renewcommand{\sl}{\mathfrak{sl}}
\begin{document}

%
%
%
%
%
%
%
%
%
\title[Particles and $p$-adic]{Particles and $p-$adic integrals of $\Spin\left(\frac{1}{2}\right)$: spin Lie group, $\mathcal{R}(\rho,q)-$gamma and $\mathcal{R}(\rho,q)-$ beta functions, ghost and applications}%

\author[M. N. Hounkonnou, F. A. Howard]{Mahouton Norbert Hounkonnou, Francis Atta Howard}

\address{
	University of Abomey-Calavi,\\
	International Chair in Mathematical Physics and Applications\\ 
	(ICMPA--UNESCO Chair),
	072 B.P. 50 \\ 
	Cotonou,   Benin Republic}
\email{norbert.hounkonnou@cipma.uac.bj\\ (with copy to hounkonnou@yahoo.fr),\\ hfrancisatta@ymail.com; Francis$ \_ $atta$ \_ $Howard@cipma.net}

\thanks{This work was completed with the support of the NLAGA project.}

\author[K. Kangni]{Kinvi Kangni}
\address{ 
	University Felix Houphouet Boigny of Cocody, \\
	Department of Mathematics and Computer science,\\ 
	22 BP. 1214, Abidjan 22, C\^{o}te d'Ivoire}
\email{kangnikinvi@yahoo.fr}
\subjclass{Primary 05A10, 05A30 ; Secondary 11B65, 11B68, 11S23, 11S80, 33B15, 22E60, 03C10, 20E18}

\keywords{ $p$-adic spin Lie groups; fermion $p$-adic integral; $\mathcal{R}(\rho,q)-$beta function, Iwasawa algebras, Ghost polynomial }

\date{September 3, 2020}

\begin{abstract}
	In this work, we address the $p$-adic analogues of the fermion spin Lie algebras and Lie groups. We consider the extension of the fermion spin Lie groups and Lie algebras to the $p-$adic Lie groups and investigate the way to extend their integral to the zeta function as well. We  show that their groups
	are ghost friendly. In addition, we develop the $\mathcal{R}(p,q)-$deformed calculus for the Bernoulli, Volkenborn, Euler and Genocchi polynomials, and establish related  definitions. Finally, we perform a $p-$adic  generalization of beta and gamma functions and exhibit some physical applications.   
\end{abstract}

\maketitle
\section{Introduction}\label{INT}
The concept of $p-$adic analysis has many applications in Mathematics and Physics, especially, in string theory, where it is used to compute the zeta functions. The results of the $p-$adic integral were used in several areas of mathematics;  Ilani \cite{ref48} gave an explicit formula for the number of subgroups of index $p^{n}$ in the principle congruence subgroups of $\SL2(\BC_{p})$ (for odd primes $p$), and for the zeta function associated with the group.
 Kim \cite{ref62} introduced  interesting $p-$adic analogues of the Eulerian polynomials.  Some identities of the Eulerian polynomials in connection with the Genocchi, Euler, and tangent numbers were studied. A symmetric relation between the $q-$extension of the alternating sum of integer powers and the Eulerian polynomials was found.
The $(p, q)-$generalization of the binomial coefficients was a focus of Corcino's work \cite{ref14}.  He also discussed a number of helpful features that are similar to those of the regular and $q-$binomial coefficients. Duran $\textit{et al.}$ \cite{ref15} took into account the $(p, q)-$extensions of the Genocchi, Euler, and Bernoulli polynomials and produced the $(p, q)-$analogues of well-known prior formulae and identities. Milovanovic $\textit{et al.}$ \cite{ref55} proceeded to the integral modification of the generalized Bernstien polynomials and developed a novel generalization of beta functions based on $(p, q)-$numbers. As an application, Sadjang \cite{ref59} provided two $(p, q)-$Taylor formulas for polynomials and derived various properties of $(p, q)-$derivatives and $(p,q)-$integrals. Many mathematicians and physicists have studied and investigated special polynomials covering the classical Bernoulli, Euler, and Genocchi polynomials and their generalizations with several applications \cite{ref2,ref7, ref8, ref12, ref13, ref16, ref17, ref58}.
Based on $(\rho, q)-$numbers, Duran $\textit{et al.}$ \cite{ref65} generalized the $p$-adic factorial function and $p-$adic gamma function. They created several recurrence relations and identities using these generalizations. They  developed a number of fresh and intriguing identities and formulae employing some properties of $(\rho, q)-$numbers. In addition, they investigated the $(\rho, q)-$extension of the $p-$adic beta function via the $p-$adic $(\rho, q)-$gamma function. 
In order to generalize known deformed derivatives and integrations of analytical functions defined on a complex disc as special cases corresponding to conveniently selected meromorphic functions, Hounkonnou $\textit{et al.}$, developed a framework for $\mathcal{R}(p,q)-$deformed calculus. This framework provided a method of computation for deformed $\mathcal{R}(p,q)-$derivative and integration. They defined the $\mathcal{R}(p,q)-$derivative and integration and provided pertinent examples under predetermined conditions. More results on this can be found in \cite{ ref34, ref35, ref36, ref37, ref38, ref39, ref40, ref41, ref42, ref43, ref44, ref45}.
Elementary spin particles have Lie structure which are parastatistics elements with some kind of Hopf alagbras. These Lie algebras have its corresponding Lie groups which are specifically spin Lie groups, that is, Fermion spin Lie group and Boson spin Lie group \cite{ref33}. A recent study in \cite{ref33} by Hounkonnou, Howard and Kangni, showed that these spin Lie groups arise from Clifford algebras and they are connected and semisimple. These authors further showed that any spin Lie group $\mathrm{G}$ can be decomposed into  
$$
\mathrm{G}= \mbox{\CYRZH} K D^{s} N
$$ 
where $K$ is compact, $D^{s}$ is a rotational function ($d$-function), and $N$ is nilpotent (Ladder operators), and  \CYRZHDSC $(\alpha^{-1})$ denotes the fine structure constant and all other translational energy of elementary spin particles.
Most inspiringly, their paper revealed that the Lie algebra $\spin (j)$ of a particles can be represented by classical matrices, which make it easier to see their algebraic nature \cite{ref15, ref17, ref22}: 
\[ 
\spin(j)=
\begin{cases}
	\text{higgs}    & \text{$j=0$;}\\
	\text{fermions} & \text{$j=\frac{\mathbb{Z}}{2}$ when odd integer spins are considered};\\ 
	\text{bosons}   & \text{$j=\BZ$ when positive integer spins are considered}.
\end{cases}
\]	
For fermions, when the quantum $j=\frac{\mathbb{Z}}{2},$ that is, odd integer spins are considered, the following questions naturally arise:
\begin{enumerate}\label{NQ}
	\item [(1)] Are fermion spin Lie groups $p-$adic \cite{ref46}?
	\item [(2)] What are the Iwasawa algebras of the $\spin(\frac{1}{2})$?
	\item [(3)] Is there a natural deformation for $\spin(\frac{1}{2})$ ?
	\item [(4)] Can one define the $p-$adic zeta function for $\spin(\frac{1}{2})$?
	
	\item [(5)] Can one extend the $p-$adic integral (fermionic and bosonic due to T. Kim \cite{ref62}) quantum calculus to $\mathcal{R}(p,q)-$deformation developed by Hounkonnou?	
	
\end{enumerate}

Motivated by the work described above, we demonstrate in this study that the spin particle fermionic Lie group structure admits a $p-$adic Lie group structure. We next extend the findings to the $p-$adic zeta integral, and  the known $(p,q)-$calculus results to the $\mathcal{R}(p,q)-$quantum calculus.

The  paper is organized as follows. In Section~\ref{prelim}, we recall main  definitions  and known results  useful in the sequel, and set the notation. 
Section~\ref{HNK} deals with the Hounkonnou $\mathcal{R}(p,q)-$deformed quantum algebras. In Section~\ref{Gamma}, we develop the $\mathcal{R}(p,q)-$gamma and beta functions and some properties. We also set the definition of the $p-$adic spin Lie group and its connections with zeta functions in Section~\ref{Adic}. Further, we consider some applications of the $\mathcal{R}(p,q)-$polynomials in \ref{Poly}. Finally, we end with some concluding remarks in Section~\ref{rem}.
\section{Preliminaries}\label{prelim}
In this section, for the sake of clarity of our presentation, we give a quick overview of adopted notations,  definitions and some properties on the $p-$adic numbers, ultrametric, one parameter subgroup, solvable and nilpotent Lie algebras, unipotent group, finite $p-$groups, ghost polynomials, $p-$adic analytic manifolds, $p-$adic $q-$integrals, Bernoulli, Euler  and Genocchi polynomials. More details can be found in \cite{ref3, ref4, ref5, ref6, ref7, ref8, ref12, ref14, ref17, ref18, ref19, ref20, ref22, ref23, ref24, ref25, ref26, ref27, ref29, ref99, ref93, ref53, ref52, ref49, ref33, ref62, ref69, ref83, ref56, ref70, ref79}. 
\subsection{$p$-adic numbers}
$p$ will denote an arbitrary, but fixed, prime number. Each rational number $x\neq 0$ can be written uniquely as
\begin{align*}
	x=p^{n}\cdotp \dfrac{a}{b}
\end{align*}
with $a,b,n \in \BZ$, $b>0$ , $\gcd(a,b)=1$ and $p \nmid ab$. We put
$$v_{p}(x)=n, \quad \mid x \mid_{p}=p^{-n};$$
here $\mid\cdotp \mid_{p}$ is the $p$-adic absolute value of $\mathbb{Q}$.
This absolute value induces a metric on $\mathbb{Q}$, and the completion of $\mathbb{Q}$ with respect to this metric is the $p$-adic field $\mathbb{Q}_{p}$. Each element of $\BZ_{p}$ is the limit of a Cauchy sequence in $\mathbb{Q}$ whose terms all lie in $\BZ$. 
It follows that each $p$-adic integer is the sum of a series $\sum_{n=0}^{\infty}a_{n}p^{n}$ with $a_{n}\in \BZ$.
Thus, 
\begin{align*}
	\mid x \mid\in \left\lbrace p^{k}: k\in \mathbb{Z} \right\rbrace  \cup \left \lbrace 0 \right \rbrace,
	\mid x+y \mid \leq \max \left\lbrace\mid x \mid, \mid y \mid \right\rbrace 
	\mid xy \mid = \mid x \mid \mid y \mid, \mid 1 \mid=1
\end{align*}
for all $x,y\in \mathbb{Q}_{p}$. For $t>0$, let $ \mathbb{Q}_{p}=\left \lbrace x\in \mathbb{Q}_{p}: \mid x \mid\leq t \right \rbrace. $  
Assume $\mathbb{Q}_{p}$ is an additive abelian locally compact topological group. The set $t$ is then a compact open subgroup of $\mathbb{Q}_{p}$ for each $t>0$.
Assume $\mathbb{\BZ_{p}}$ is the group of $p$-adic integers, and $\BZ$ is a dense subgroup of $\BZ_{p}$.
The proper quotient rings of $\BZ_{p}$ are the familiar finite rings $\BZ_{p}/p^{n} \BZ_{p}\cong \BZ/p^{n} \BZ_{p}. $ $\BZ_{p}$ can be regarded as the inverse limit of $\BZ_{p}/p^{n} \BZ_{p}$. We also put $\mathbb{Q}_{p}^{*}=\mathbb{Q}_{p}-\left \lbrace 0 \right \rbrace$ as the locally compact multiplicative group of $p$-adic numbers.   
The groups $\GL(n,\mathbb{Q}_{p})$, $\SL(n,\mathbb{Q}_{p})$, $\Sp(n,\mathbb{Q}_{p})$ are the $p$-adic analytic groups, taking as open compact subgroups, the groups $\GL(n,\mathbb{Z}_{p})$, $\SL(n,\mathbb{Z}_{p})$, $\Sp(n,\mathbb{Z}_{p})$.

\begin{defn}\cite{ref46}
	The space $(X,d)$ is called	ultrametric space if it satisfies the strong inequality $$  d(x,y)\leq \max d(x,z)+d(z,y)$$
	for all $x,y$ and $z\in X$. 
\end{defn}

\subsection{One-parameter subgroup}
Let $\varphi : \mathbb{Q}_{s} \longrightarrow G_{s}$ be a continuous one-to-one homomophism from the additive group $\mathbb{Q}_{s}$ into $G_{s}$. 
All one-parameter subgroups of $\GL(n,\mathbb{Z}_{p})$ can be described using the exponential map on matrices \cite{ref83}. 

\subsubsection{Solvable and nilpotent Lie algebras}
\begin{defn}\cite{ref82}
	A Lie algebra $g$ is solvable if its derived series terminates that is, $g^{n}=0$ for some $n.$ 
	We say $g$ is nilpotent if the lower central series terminates, that is, $g_{n}=0$ for some $n.$ The Lie algebra $g$ is nilpotent of class at most $n$, if $g_{n}=0$.
\end{defn}
\begin{cor}
	\begin{enumerate}
		\item [(i)] Every nilpotent Lie algebra is Solvable.
		\item [(ii)] The commutator series for the Lie algebra is given by $$g^{0}=g, \quad g^{1}=[g,g],\quad g^{n+1}=[g^{n}, g^{n}].$$
		\item [(iii)] The lower central series is given by
		$$g_{0}=g, \quad g_{1}=[g,g], \quad g_{n+1}=[g_{n},g].$$
		
	\end{enumerate}			
\end{cor}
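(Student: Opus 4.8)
The plan is to observe first that parts (ii) and (iii) carry no logical content beyond the preceding definition: they simply record the recursive constructions of the derived (commutator) series $g^{n+1}=[g^{n},g^{n}]$ and of the lower central series $g_{n+1}=[g_{n},g]$, both anchored at $g^{0}=g_{0}=g$. These are precisely the series whose termination was used to \emph{define} solvability and nilpotency, so I would present them as definitional and pass directly to the only assertion requiring an argument, namely (i).

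For (i), the strategy is to compare the two series term by term and establish the inclusion
$$
g^{n} \subseteq g_{n} \qquad \text{for all } n \geq 0
$$
by induction on $n$. The base case is immediate, since $g^{0}=g=g_{0}$. For the inductive step, assume $g^{n}\subseteq g_{n}$. Since the derived series is decreasing, one also has $g^{n}\subseteq g^{0}=g$; and the bracket of subspaces is monotone in each argument, i.e. if $A\subseteq C$ and $B\subseteq D$ then $[A,B]\subseteq[C,D]$, because $[A,B]$ is spanned by the brackets $[a,b]$ with $a\in A$, $b\in B$. Applying this with $A=B=g^{n}$, $C=g_{n}$, $D=g$ yields
$$
g^{n+1}=[g^{n},g^{n}]\subseteq[g_{n},g]=g_{n+1},
$$
which closes the induction.

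The conclusion then follows at once. If $g$ is nilpotent, its lower central series terminates, so $g_{n}=0$ for some $n$; by the inclusion just proved, $g^{n}\subseteq g_{n}=0$, whence $g^{n}=0$ and the derived series terminates as well. By the definition of solvability, this means $g$ is solvable, which is exactly (i).

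I do not expect a genuine obstacle in this argument. The single point deserving care is the monotonicity of the bracket, which rests on the convention that $[\,\cdot\,,\,\cdot\,]$ applied to subspaces denotes the linear span of the pairwise brackets, so that enlarging either argument can only enlarge the result. Once this convention is granted, the induction is routine and the implication nilpotent $\Rightarrow$ solvable reduces to essentially one line.
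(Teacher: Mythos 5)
The paper states this corollary without any proof at all: it appears in the preliminaries (Section 2) as recalled background material, cited to Knapp's book, immediately after the definition of solvable and nilpotent Lie algebras. So there is no paper argument to compare against; your proposal supplies the proof the paper omits.

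Your proof is correct and is the standard one. You rightly identify (ii) and (iii) as purely definitional — the paper's definition invokes the derived and lower central series without having defined them, so the corollary's displayed recursions are in effect their definitions — and the only substantive claim is (i). The inclusion $g^{n}\subseteq g_{n}$, proved by induction using monotonicity of the bracket of subspaces together with $g^{n}\subseteq g$, is exactly the textbook argument, and the conclusion (nilpotent $\Rightarrow$ lower central series terminates $\Rightarrow$ derived series terminates $\Rightarrow$ solvable) follows at once. The one step you state but do not prove — that the derived series is decreasing, or at least that $g^{n}\subseteq g$ — is itself a one-line induction ($g^{0}=g$, and $g^{n}\subseteq g$ gives $g^{n+1}=[g^{n},g^{n}]\subseteq[g,g]\subseteq g$), so no genuine gap remains.
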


\begin{ex}
	\begin{enumerate}
		
		\item [(1)] The Lie algebra 
		\[
		g=\left(\begin{array}{ccccc}
			a_{1} &  &  &  & b\\
			& \cdot\\
			&  & \cdot\\
			&  &  & \cdot\\
			0 &  &  &  & a_{n}
		\end{array}\right)
		\] is solvable.
		\bigskip
		$g$ is a set of upper triangular matrices with real diagonal entries such that $a_{i}\in \mathbb{C}$ for $1\leq i \leq n$.
		
		\item [(2)] The Lie algebra 
		\[
		g=\left(\begin{array}{ccccc}
			0 &  &  &  & a\\
			& \cdot\\
			&  & \cdot\\
			&  &  & \cdot\\
			0 &  &  &  & 0
		\end{array}\right)
		\]

		is nilpotent.
		This is also know as the Heisenberg Lie algebra.
	\end{enumerate}
\end{ex}
\subsubsection{Unipotent group}
Let $k$ be a finite field of characteristic $p$ and $V=k^{n}$. The group $\GL(n,k)$ of all invertible $n \times n$ matrices over $k$ may be identified with the group $\GL(V)$ of all $k$-linear automorphisms of $V$. We denote by $\U(n)$ the subgroup consisting of upper uni-triangular matrices\cite{ref27}. An automorphism $g$ of $V$ is unipotent if $(g -1)^{n}$ is the zero endomorphism. A subgroup $H$ of $\GL(n,k)$ is said to be unipotent if each of its elements is unipotent \cite{ref27}.
\begin{rem}
	The group $G$ is a finite $p$-group if $|G| = p^{n}$
	for some $n$ and some prime $p$.
\end{rem}

\begin{thm}\cite{ref27}
	Every finite $p$-group is nilpotent.
\end{thm}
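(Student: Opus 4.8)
The plan is to reduce the statement to the single fundamental fact that every nontrivial finite $p$-group has a nontrivial center, and then to iterate this fact up the upper central series. First I would establish the \emph{center lemma}: if $G$ is a finite $p$-group with $|G| = p^{n} > 1$, then its center satisfies $Z(G) \neq \{e\}$. The tool here is the class equation
$$|G| = |Z(G)| + \sum_{i} [G : C_{G}(x_{i})],$$
where the $x_{i}$ run over representatives of the conjugacy classes of size greater than one. Each index $[G : C_{G}(x_{i})]$ divides $p^{n}$ and exceeds $1$, hence is divisible by $p$; since $|G|$ is also divisible by $p$, reducing the class equation modulo $p$ forces $p \mid |Z(G)|$, so in particular $Z(G)$ is nontrivial.

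With the center lemma in hand, I would build the upper central series $Z_{0} = \{e\} \subseteq Z_{1} = Z(G) \subseteq Z_{2} \subseteq \cdots$, defined by $Z_{i+1}/Z_{i} = Z(G/Z_{i})$. The key point is that each quotient $G/Z_{i}$ is again a finite $p$-group, so as long as $G/Z_{i} \neq \{e\}$ the center lemma yields $Z(G/Z_{i}) \neq \{e\}$, i.e.\ $Z_{i+1} \supsetneq Z_{i}$. Because $G$ is finite, this strictly ascending chain of subgroups cannot continue indefinitely; it must stabilize, and it can only stabilize when $G/Z_{i}$ is trivial, that is, when $Z_{i} = G$. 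Hence the upper central series reaches $G$ after finitely many steps, which is exactly the statement that $G$ is nilpotent.

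Alternatively, I could argue by induction on $n$: the base case $n \leq 1$ is immediate since abelian groups are nilpotent, and for the inductive step the center lemma furnishes a nontrivial normal subgroup $Z(G)$, so $G/Z(G)$ is a $p$-group of strictly smaller order, hence nilpotent by the induction hypothesis; one then lifts a terminating central series of $G/Z(G)$ through the quotient map to obtain one for $G$. I expect the only genuine obstacle to be the center lemma itself, specifically the observation that every nontrivial conjugacy class has $p$-divisible size, since the remaining bookkeeping with the central series is routine once nontriviality of the center is secured at every stage.
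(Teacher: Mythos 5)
Your proof is correct. Note, however, that the paper itself gives no proof of this statement: it is quoted verbatim from the cited reference (Dixon--du\,Sautoy--Mann--Segal, \emph{Analytic Pro-$p$ Groups}) as background material, so there is nothing in the paper to compare your argument against. What you have written is the standard classical argument --- the class equation forces $p \mid |Z(G)|$ for any nontrivial finite $p$-group, and iterating this nontriviality along the upper central series (or, equivalently, inducting on $|G|$ through the quotient $G/Z(G)$) terminates at $G$ by finiteness --- and it is complete; the only point worth flagging is that you use the upper central series characterization of nilpotency, whereas the paper's surrounding definitions are phrased via the lower central series, but these are equivalent by a routine argument.
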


\begin{thm}[ Ghost polynomials]\label{thmG}\cite{ref99}
	Let $G$ be one of the classical groups $\GO_{2l+1}$, $\GSp_{2l}$ or $\GO_{2l}^{+}$ of type $B_{l}$, $C_{l}$ or $D_{l}$ respectively. Then $\BZ_{G}(s)$ has abscissa of convergence $a_{l}+1$ and has natural boundary at $\Re(s)=\beta$ where
	\begin{enumerate}
		\item [(i)] $\beta=l^{2}-1=a_{l}-1$ if $G=\GO_{2l+1}$
		\item [(ii)] $\dfrac{l(l+1)}{2}-2= \dfrac{a_{l}-2}{2}+1$ if $G=\GSp_{2l}$ and 
		\item [(iii)] $\dfrac{l(l-1)}{2}-2= \dfrac{a_{l}-2}{2}$ if $\GO_{2l}^{+}.$
	\end{enumerate}
	
\end{thm}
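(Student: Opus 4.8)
The plan is to realize $\BZ_{G}(s)$ as an Euler product $\BZ_{G}(s)=\prod_{p}\BZ_{G,p}(s)$ over the rational primes, each local factor being an explicit rational function in $p$ and $p^{-s}$, and then to extract both the abscissa of convergence and the natural boundary from the analytic behaviour of this product. First I would compute the local factors $\BZ_{G,p}(s)$ by expressing them as $p$-adic (Igusa-type) integrals over $G(\BZ_{p})$ and evaluating them using the root datum and Weyl-group combinatorics attached to the type ($B_{l}$, $C_{l}$ or $D_{l}$). Because $G$ is a Chevalley group, this evaluation is uniform in $p$, so each $\BZ_{G,p}(s)$ comes out as a ratio of products of factors of the form $(1-p^{c-ds})^{\pm 1}$, with the exponents $c,d$ read off from the degrees of the basic invariants of the Weyl group and from the number of positive roots of the relevant root system.

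Granting the explicit local factors, the abscissa of convergence is almost immediate: the Euler product $\prod_{p}(1-p^{c-ds})^{-1}$ converges precisely for $\Re(s)>(c+1)/d$, so the rightmost such factor pins down the abscissa, and a direct bookkeeping of the dominant exponents delivers the value $a_{l}+1$ in each of the three cases. The meromorphic continuation past this line is obtained by comparing the product against translated copies of the Riemann zeta function: since $\prod_{p}(1-p^{c-ds})^{-1}=\zeta(ds-c)$, one factors $\BZ_{G}(s)=\bigl(\prod_{i}\zeta(\alpha_{i}s-\gamma_{i})^{m_{i}}\bigr)\,W(s)$, where the finite product over $i$ is the \emph{ghost} in the sense of du Sautoy and $W(s)$ is a correction Euler product.

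The crux is the location of the natural boundary at $\Re(s)=\beta$, and here the ghost carries the argument. The key point is that the correction factor $W(s)$ converges, and is holomorphic and nowhere zero, in a half-plane reaching strictly to the left of the candidate line, so it cannot cancel the singularities produced by the zeta factors. The singularities of the ghost arise from the pole of $\zeta$ at $1$ and from the nontrivial zeros of $\zeta$ in the critical strip; translating these by the maps $s\mapsto\alpha_{i}s-\gamma_{i}$ produces singularities of $\BZ_{G}(s)$ whose real parts accumulate. Using a standard zero-free region together with the density of nontrivial zeros of $\zeta$ along the critical line, I would show that these translated singularities accumulate at every point of the vertical line $\Re(s)=\beta$, so that $\BZ_{G}(s)$ admits no analytic continuation across it; the three displayed values of $\beta$ then emerge from the same exponent bookkeeping as in the abscissa computation, now applied to the leftmost ghost factor and its reflection.

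The main obstacle is precisely this last step. Establishing a genuine natural boundary, as opposed to merely a boundary of the region of absolute convergence, requires (a) proving that $W(s)$ really does extend holomorphically and without zeros past $\Re(s)=\beta$, which demands uniform estimates on the correction Euler product over all $p$, and (b) importing enough analytic number theory about the zeros of $\zeta$ to guarantee that the accumulating poles are dense along the whole line rather than a proper subset of it. Verifying the precise constants $l^{2}-1$, $\frac{l(l+1)}{2}-2$ and $\frac{l(l-1)}{2}-2$, and their identification with the invariants $a_{l}$, is then a finite but delicate combinatorial check on the exponents coming from the root systems of types $B_{l}$, $C_{l}$ and $D_{l}$.
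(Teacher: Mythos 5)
You should first be aware that the paper contains no proof of this statement: Theorem~\ref{thmG} is quoted as background directly from du Sautoy--Grunewald \cite{ref99}, and is later only invoked (together with Theorems \ref{thm1} and \ref{thm4}) to deduce the ghost statement for $\Spin_{\BZ_{p}}(\frac{1}{2})$. So your proposal can only be measured against the argument in that reference, and against it there is a genuine gap. The decisive flaw is your claim that each local factor $\BZ_{G,p}(s)$ comes out as a ratio of products of factors $(1-p^{c-ds})^{\pm 1}$ with exponents $c,d$ uniform in $p$. If that were true, multiplying over all primes would give $\BZ_{G}(s)=\prod_{i}\zeta(d_{i}s-c_{i})^{\pm 1}$, a \emph{finite} product of translated Riemann zeta functions, hence a function meromorphic on all of $\BC$ --- and the theorem you are trying to prove would be false, since such a function has no natural boundary. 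Igusa's evaluation of these $p$-adic integrals via the Weyl groups of types $B_{l}$, $C_{l}$, $D_{l}$ yields local factors whose numerators are polynomials in $p^{-1}$ and $p^{-s}$ that are \emph{not} products of cyclotomic factors; this non-cyclotomic numerator is precisely what makes the ghost necessary and the natural boundary possible, so your starting point erases the phenomenon the theorem is about.

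Your mechanism for the boundary itself also fails as stated. The ghost is a finite product of translated zetas, so it is meromorphic on $\BC$ with isolated zeros and poles; its singularities cannot ``accumulate at every point of the vertical line $\Re(s)=\beta$''. Combined with your own assumption (a), that $W(s)$ is holomorphic and nowhere zero in a half-plane reaching strictly past the candidate line, your factorization would exhibit $\BZ_{G}(s)$ as meromorphic in a neighbourhood of $\Re(s)=\beta$, contradicting the conclusion. In \cite{ref99} the roles are the reverse of what you describe: the finite ghost is the harmless, meromorphically continuable approximation, while the natural boundary is produced by the correction factor, which is expanded as an infinite product $\prod_{(a,b)}\zeta(bs-a)^{c(a,b)}$ over \emph{infinitely many} pairs $(a,b)$ with nonvanishing exponents. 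The zeros of these infinitely many translates (using Hardy's theorem and density results for critical zeros) have real parts accumulating at $\beta$ and become vertically dense along that line, and a non-cancellation argument shows they are genuine singular points of $\BZ_{G}(s)$. This infinite-expansion-plus-non-cancellation step is exactly what your plan defers to ``obstacles (a) and (b)'', so the crux of the theorem is missing; the exponent bookkeeping that would produce the constants $l^{2}-1$, $\frac{l(l+1)}{2}-2$, $\frac{l(l-1)}{2}-2$ cannot even begin from local factors of the form you posit.
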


\begin{thm}\label{thm1}\cite{ref33}
	Any spin Lie group $\Spin(J)$ of a spin particle is:
	\begin{enumerate}
		\item [(i)] connected;
		\item [(ii)] semi-simple if and only if its simple roots are one of the Dynkin's root systems $\Pi(B_{n})$ or $\Pi(D_{n})$ associated with the classical groups $\SO(2n+1)$ and $\SO(2n)$, respectively.
	\end{enumerate}
\end{thm}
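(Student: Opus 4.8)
The plan is to handle (i) and (ii) separately, using the realization of $\Spin(J)$ inside the Clifford algebra established in \cite{ref33}. For (i), recall that $\Spin(J)$ sits inside the even part $\Cl^{0}$ of the Clifford algebra as the group generated by products $v_{1}v_{2}\cdots v_{2k}$ of an even number of unit vectors. The key observation is that each generator lies on a continuous one-parameter subgroup: for orthonormal $e_{i},e_{j}$ the curve $\gamma(t)=\exp(\tfrac{t}{2}e_{i}e_{j})=\cos(t/2)+\sin(t/2)\,e_{i}e_{j}$ runs inside $\Spin(J)$ from the identity at $t=0$ to an arbitrary simple rotor. Writing a group element as a finite product of such rotors and concatenating the associated paths exhibits a path to the identity, so $\Spin(J)$ is path-connected, hence connected.

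For (ii), the central point is that the covering homomorphism $\Spin(J)\to\SO(\cdot)$ is a local isomorphism, so the Lie algebra $\spin(J)$ coincides with $\so(m)$, spanned by the bivectors $e_{i}e_{j}$ ($i<j$). Since a connected Lie group is semisimple exactly when its Lie algebra is, I would decide semisimplicity of $\so(m)$ via Cartan's criterion: compute the Killing form on the bivector basis and check nondegeneracy. This holds for $m\geq 3$ and fails only at $m=2$, where $\so(2)$ is one-dimensional abelian. I then choose a Cartan subalgebra and decompose $\so(m)\otimes\mathbb{C}$ into root spaces, recovering the standard systems: type $B_{n}$ for $m=2n+1$ (the group $\SO(2n+1)$) and type $D_{n}$ for $m=2n$ (the group $\SO(2n)$). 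Because the classification of complex semisimple Lie algebras shows that $B_{n}$ and $D_{n}$ are precisely the reduced irreducible systems of the orthogonal family and determine the algebra up to isomorphism, the semisimple cases are exactly those whose simple roots form $\Pi(B_{n})$ or $\Pi(D_{n})$; the lone excluded algebra $\so(2)$ carries no nontrivial root system, giving the stated equivalence.

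The main obstacle I anticipate is the careful treatment of the boundary and of the backward implication. One must use the uniqueness in the root-system classification to pass from the abstract data $\Pi(B_{n})$ or $\Pi(D_{n})$ back to $\so(2n+1)$ or $\so(2n)$, and correctly accommodate the low-rank coincidences — $\so(3)\cong\su(2)$ (type $B_{1}$), $\so(4)\cong\su(2)\oplus\su(2)$ (type $D_{2}$, semisimple but not simple), and $\so(6)\cong\su(4)$ (type $D_{3}$) — as well as the degenerate $\so(2)$, so that none of these special cases breaks the ``if and only if.''
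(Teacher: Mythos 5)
This paper never actually proves Theorem~\ref{thm1}: it is quoted in the Preliminaries as a known result imported from \cite{ref33}, so there is no internal proof to measure your attempt against, and the review below judges your argument on its own merits and against what of \cite{ref33} is visible here. Your route is the standard and correct one: path-connectedness from the rotor one-parameter subgroups $\exp\left(\frac{t}{2}e_{i}e_{j}\right)$ inside the even part of the Clifford algebra; the identification $\spin(J)\cong\so(m)$ through the covering homomorphism; Cartan's criterion (nondegeneracy of the Killing form) to isolate $m\geq 3$; and the root-space decomposition giving $\Pi(B_{n})$ for $m=2n+1$ and $\Pi(D_{n})$ for $m=2n$, with the uniqueness part of the classification of complex semisimple Lie algebras supplying the converse implication. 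This is consistent with the approach of \cite{ref33} insofar as it is reflected in this paper: Theorem~\ref{thm4} records exactly the passage from the spin matrices of a $\spin\left(\frac{2n-1}{2}\right)$ particle to the compact group $\SO(2n)$ with root system $\Pi(D_{n})$, i.e., the Clifford-algebraic realization your proof starts from. Your handling of the degenerate case $\so(2)$ and of the low-rank coincidences ($B_{1}$, $D_{2}$, $D_{3}$) is the right care to take for the ``if and only if.'' The one point you should tighten is the link between the physical label $J$ and the dimension $m$: the theorem is stated in terms of the particle's spin, so you need the dictionary (as in Theorem~\ref{thm4}, the $2n$-dimensional state space of a fermion, respectively $2n+1$ states for a boson) that tells you which $\so(m)$ the group $\Spin(J)$ actually has as its Lie algebra; without that sentence your argument proves a statement about abstract $\Spin(m)$ rather than about $\Spin(J)$ as defined in \cite{ref33}.
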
	

\begin{thm}\label{thm4}\cite{ref33}
	For any $\spin(\frac{2n-1}{2})$, the quantum state of the particle is spanned by $2n$ states and there exists orthogonal matrix element $S_{k_{n}}$ in the $S_{y_{n}}$ matrix which can be transformed into the classical group $\SO(2n)$ with natural numbers $n=1,2,3,\ldots$. This compact Lie group $\SO(2n)$ corresponds to the Dynkin's root $\Pi (D_{n})$. 
\end{thm}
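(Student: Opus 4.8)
The plan is to prove the three assertions in order: the dimension of the state space, the identification of the spin matrix $S_{y_n}$ with an element of the orthogonal Lie algebra $\so(2n)$, and the consequent correspondence with the Dynkin root system $\Pi(D_n)$, which I would read off from Theorem~\ref{thm1}. First I would settle the state count. For a spin-$j$ particle the carrier space of the irreducible representation has dimension $2j+1$; substituting $j=\frac{2n-1}{2}$ gives $2j+1=2n$, so the quantum state is spanned by exactly $2n$ kets $\ket{j,m}$ with $m=-j,-j+1,\dots,j$, fixing the ambient space as $\BC^{2n}$ (or $\BR^{2n}$ after realification).

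Next I would write the spin operators $S_x,S_y,S_z$ in this basis and single out $S_{y_n}$, whose matrix elements
\[
(S_{y_n})_{m',m}=\frac{1}{2i}\Bigl[\sqrt{(j-m)(j+m+1)}\,\delta_{m',m+1}-\sqrt{(j+m)(j-m+1)}\,\delta_{m',m-1}\Bigr]
\]
are purely imaginary. Because $S_{y_n}$ is Hermitian, purely imaginary entries force $S_{y_n}^{T}=-S_{y_n}$, so $iS_{y_n}$ is a \emph{real antisymmetric} $2n\times 2n$ matrix; its entries are the orthogonal matrix elements $S_{k_n}$ of the statement. Since the real antisymmetric matrices are exactly $\so(2n)=\mathrm{Lie}(\SO(2n))$, I would then invoke the canonical-form theorem for antisymmetric matrices to produce an orthogonal $O\in\SO(2n)$ conjugating $iS_{y_n}$ into block-diagonal form with $2\times2$ skew blocks $\left(\begin{smallmatrix}0&\lambda_k\\-\lambda_k&0\end{smallmatrix}\right)$, $k=1,\dots,n$. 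Exponentiating $iS_{y_n}$ then lands in $\SO(2n)$, realizing the transformation into the classical group. As $\so(2n)$ has rank $n$ and root system of type $D_n$, appealing to Theorem~\ref{thm1}(ii) --- which ties semisimplicity of the spin group to $\Pi(B_n)$ or $\Pi(D_n)$ --- selects $\Pi(D_n)$ for every $n=1,2,3,\dots$.

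The hard part will be passing from the single generator to the full group and root system. Exponentiating $iS_{y_n}$ by itself yields only a one-parameter torus $\SO(2)\subset\SO(2n)$, so to reach all of $\SO(2n)$ one must exhibit enough mutually independent skew generators, i.e.\ show that the symmetry algebra of the $2n$-dimensional state space is the full $\so(2n)$. A genuine subtlety is that the half-integer spin representation is pseudoreal and hence carries an $\SU(2)$-invariant \emph{alternating} form, which would naively point to the symplectic type $C_n$ (the group $\Sp(2n)$) rather than the orthogonal type $D_n$. The crux of the argument is therefore that the identification must key on the antisymmetry of the individual real generator $iS_{y_n}$, which forces membership in $\so(2n)$, rather than on an $\SU(2)$-invariant bilinear form; once that is secured, Theorem~\ref{thm1}(ii) pins the root system down as $\Pi(D_n)$.
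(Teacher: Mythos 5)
First, a point of comparison: the paper contains no proof of this statement at all --- it is recalled in the preliminaries and attributed to the authors' earlier work \cite{ref33} --- so your proposal can only be assessed on its own merits, not against an in-paper argument. Taken step by step, your ingredients are individually correct: $2j+1=2n$ for $j=\frac{2n-1}{2}$; the matrix elements of $S_{y_{n}}$ in the standard $\ket{j,m}$ basis are purely imaginary, so Hermiticity forces $S_{y_{n}}^{T}=-S_{y_{n}}$ and $iS_{y_{n}}$ is a real antisymmetric $2n\times 2n$ matrix, hence an element of $\so(2n)$; a real antisymmetric matrix is orthogonally conjugate to a direct sum of $2\times 2$ skew blocks; $\exp(iS_{y_{n}})\in\SO(2n)$; and $\so(2n)$ has root system $D_{n}$.

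The genuine gap is the one you name yourself and then do not close. Exponentiating the single generator $iS_{y_{n}}$ produces only a one-parameter subgroup of $\SO(2n)$, and nothing in your argument shows that the spin-$\frac{2n-1}{2}$ structure singles out the \emph{full} algebra $\so(2n)$ rather than a proper subalgebra; nor do you dispose of the obstruction you raise, namely that the half-integer-spin representation is pseudoreal, so the $\SU(2)$-invariant bilinear form on the $2n$-dimensional state space is alternating and the representation naturally sits inside $\mathfrak{sp}(2n)$ (type $C_{n}$), not inside an orthogonal algebra. Your proposed resolution --- that ``the identification must key on the antisymmetry of the individual real generator'' --- is a restatement of what has to be proved, not an argument: a Hermitian matrix with purely imaginary entries becomes real antisymmetric after multiplication by $i$ in \emph{any} dimension, even or odd, so this observation alone can neither establish the claimed structural correspondence between $\spin(\frac{2n-1}{2})$ and $\SO(2n)$ nor select $\Pi(D_{n})$ over $\Pi(B_{n})$. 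The appeal to Theorem~\ref{thm1}(ii) cannot do that work either, since that theorem only offers the dichotomy $\Pi(B_{n})$ or $\Pi(D_{n})$; choosing $D_{n}$ presupposes the identification of the relevant algebra as $\so(2n)$, which is exactly what remains unproven. Your phrase ``once that is secured'' concedes this: the crux is correctly identified but left open, so the proposal establishes the dimension count and the membership $iS_{y_{n}}\in\so(2n)$, but not the theorem's central assertion.
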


\subsection{Kim $p-$adic $q-$integral}
Let $p$ be a fixed odd prime. Throughout the paper, $\BZ_p$, $\mathbb{Q}_p$, $\BC$, and $\BC_p$ stand for the ring of $p$-adic rational integers, the field of $p$-adic rational numbers, the complex number field, and the completion of the algebraic closure of $\mathbb{Q}_p$, respectively \cite{ref62}. Let $v_{p}$ be the normalized exponential valuation of $\BC_p$ with $|p|p=p-vp(p)=\frac{1}{p}$ .
We assume that $$|q-1|_{p}<P^{-\frac{1}{p-1}},$$ so that $q^{x}=\exp(x\log q)$ for $|x|_{p}\leq 1$. When one speaks of $q$-extension, $q$ can be regarded, depending on the context, as an indeterminate, a complex number $q \in \BC$, or a $p$-adic number $q \in \BC_p$. 
We use the notation $$[x]_q=[x:q] = \dfrac{1 - q^{x}}{1 - q}$$ and $$[x]_-q = \dfrac{1 - (-q)^{x}}{1 + q}.$$ 

For $d$ a fixed positive integer with $(d,p)=1,$ let $$X=X_{d}=\varprojlim{\BZ}/{dp^{N}}, \quad \quad X_{1}=\BZ_{p},$$
$$X^{*}=\bigcup_{0<a<dp}     a+ dp\BZ_{p},  \quad \quad \mbox{with}\quad \mbox{(a,p)=1}$$
$$a+ dp\BZ_{p}= \left\lbrace x\in X| x\equiv a \;(\mod dp^{N}) \right\rbrace $$
where $a\in \BZ$ lies in $0\leq a<dp^{N}$.
For any positive integer $d$ we can obtain
\begin{align}
	\dfrac{1}{[p:q^{dp^{N}}]}\sum_{i=0}^{p-1}q^{idp^{N}}=1.
	\label{9.0}
\end{align}
We can define a $q$-analogue $\mu_{q}$ of basic distribution $\mu_{1}$.
We set 
\begin{align*}
	\mu_{q}(a+ dp\BZ_{p})=\dfrac{q^{a}}{[dp^{N}]}=\dfrac{q^{a}}{[dp^{N}:q]}
\end{align*}
and extend to any distribution $X$. For a $p$-adic distribution $\mu_{0}$ defined by 
$$\mu_{1}(a+ dp\BZ_{p})=\dfrac{1}{[dp^{N}]},$$  we note that $$\lim_{q\rightarrow 1} \mu_{q}=\mu_{1}.$$
To check if it is a distribution on $X,$ 
\begin{align*}
	\sum_{i=0}^{p-1}\mu_{q}(a+ i dp\BZ_{p}+ dp^{N+1}\BZ_{p})=\mu_{q}(a+ dp\BZ_{p}),
\end{align*}
we compute
\begin{align}
	\sum_{i=0}^{p-1}\dfrac{1}{[dp^{N+1}]}q^{a+idp^{N}}&=\dfrac{1}{[dp^{N+1}]}\sum_{i=0}^{p-1}q^{a+idp^{N}}\\
	\nonumber
	&=\dfrac{q^{a}}{[dp^{N+1}]}\sum_{i=0}^{p-1}q^{idp^{N}},
\end{align}
where $$[dp^{N+1}]=\dfrac{1-q^{dp^{N+1}}}{1-q}=[dp^{N}][p:q^{dp^{N}}].$$   
Therefore, 
\begin{align}
	\sum_{i=0}^{p-1}\mu_{q}(a+ i dp\BZ_{p}+ dp^{N+1}\BZ_{p})&=\dfrac{q^{a}}{[dp^{N+1}]}\sum_{i=0}^{p-1}q^{idp^{N}}
	\\
	\nonumber
	&=\dfrac{q^{a}}{[dp^{N}]}\dfrac{1}{[p:q^{dp^{N}}]}\sum_{i=0}^{p-1}q^{idp^{N}};\\
	\nonumber \mbox{from \quad \eqref{9.0}, \quad we \quad have \quad}
	\dfrac{q^{a}}{[dp^{N}]}
	&=\mu_{q}(a+ dp^{N}\BZ_{p}).
	\label{9.2}
\end{align}
For non-negative integers $m$, the distribution \eqref{9.2} yields an integral for the case $d=1$:
\begin{align}
	\int_{\BZ_{p}}[a]^{m}d\mu_{q}(a)=\lim_{N\rightarrow \infty}\sum_{a=0}^{p^{N}-1}[a]^{m}\dfrac{q^{a}}{[p^{N}]}
	=I_{q}([a]^{m}),
\end{align}
which is convergent.

We say that $f$ is a uniformly differentiable function at a point $a\in \BZ$, which we put as $f\in UD(\BZ_{p})$, if the quotient
$$F_{f(x,y)}=\dfrac{f(x)-f(y)}{x-y}$$ has a limit as $f^{\prime}(a)$ as $(x,y)\rightarrow (a,a).$ For $f\in UD(\BZ_{p})$, the fermionic $p$-adic $q$ integral on $\BZ_{p}$ is defined by \cite{ref62} 
\begin{align}
	I_{-q}(f)= \int_{\BZ_{p}} f(x)d \mu_{-q}(x)= \lim_{N\rightarrow \infty}\dfrac{1+q}{1+q^{p^{N}}}\sum_{x=0}^{p^{N}-1}f(x) (-q)^{x}.
\end{align}
For $f: \BZ_{p} \longrightarrow \BC_p$, we can also have 
\begin{align}
	I_{q}(f)&= \lim_{N\rightarrow \infty} \dfrac{1}{[P^{N}]_{q}} \sum_{0\leq x\leq p^{N}} q^{x} f(x) \\
	\nonumber 
	&=\lim_{N\rightarrow \infty} \sum_{0\leq x\leq p^{N}} f(x)\mu_{q}(x+p^{N}\BZ_{p})=\int_{\BZ_{p}}f(x)d \mu_{q}(x). 
\end{align}
When $q=-1,$  we have the following:
\begin{align}
	I_{-1}(f)= \lim_{q\rightarrow -1}I_{q}(f) =\lim_{q\rightarrow -1}\int_{\BZ_{p}}f(x)d \mu_{-q}(x)=\int_{\BZ_{p}}f(x)d \mu_{-1}(x).
	\label{9.3}
\end{align}

From equation \eqref{9.3} we obtain;
\begin{align}
	I_{-1}(f)=\int_{\BZ_{p}}f(x)d \mu_{-1}(x)=\lim_{N\rightarrow \infty}\sum_{x=0}^{p^{N}-1}(-1)^{x}f(x).
\end{align}
Now, given $f_{n}(x)=f(x+n),$ we can obtain $f_{1}(x)=f(x+1).$ Then, 
\begin{align*}
	I_{-1}(f_{1})&=-\lim_{N\rightarrow \infty} \sum_{x=0}^{P^{N}-1}f(x)(-1)^{x}+2f(0) \\
	\nonumber
	&=-I_{-1}(f)+2f(0),
\end{align*}

$$I_{-1}(f_{1})+I_{-1}(f)=2f(0).$$

$$\int_{\BZ_{p}}f(x+1)d\mu_{-1}(x)+ \int_{\BZ_{p}}f(x)d\mu_{-1}(x)=2f(0).$$

\subsubsection{$q$-Bernoulli number ($B_{m}(q)\in \BC_p$) \cite{ref62}}
For $B_{m}(q)\in \BC_p,$ we can see, from \eqref{9.2}, that
$$I_{q}([a]^{m})=B_{m}(q)$$ and $$\lim_{q\rightarrow 1} B_{m}(q)=B_{m},$$
where $B_{m}$ is the $m^{th}$ Bernoulli number.
The generating function 

\begin{align*}
	F_{q}(t)=\sum_{m=0}^{\infty}B_{m}(q)\dfrac{t^{m}}{m!}
\end{align*}
is given by
\begin{align}
	F_{q}(t)=\lim_{r\rightarrow \infty}\dfrac{1}{[p^{r}]}\sum_{i=0}^{p^{r}-1}q^{i}e^{[i]t},
\end{align}
satisfying the difference equation $F_{q}(t)=qe^{t}F_{q}(qt)+1-q-t$. For $q=1$, we obtain
\begin{align*}
	F_{q}(t)=\dfrac{\log e^{t}}{e^{t}-1}=\dfrac{t}{e^{t}-1}=e^{Bt}.
\end{align*}
\bigskip
The generalized $q$-Bernoulli number $B_{m,\chi}(q)$ for a primitive Dirichlet character $\chi,$ with conductor $d$ belonging to a set of positive natural numbers, is given as:

\begin{align}
	B_{m,\chi}(q)&=\int_{X}\chi(a)[a]^{m} d\mu_{q}(a)
	\\
	\nonumber
	&=\lim_{N\rightarrow \infty}\sum_{a=0}^{dp^{N-1}}[a]^{m}\chi(a)\dfrac{q^{a}}{[dp^{N}]}=I_{q}([a]^{m}\chi(a)).
\end{align}
Hence, we get $$\lim_{q\rightarrow 1}B_{m,\chi}(q)=B_{m,\chi},$$ where $B_{m,\chi}(q)$ is the $m^{th}$ generalized Bernoulli number.
In variable $x\in \BC_p$ with $|x|_{p}<1$, the $q$-Bernoulli polynomial is given by:
\begin{align}\label{6.11}
	B_{n,q}(x)&=\int_{\BZ_{p}}[x+t]^{n}_{q}d \mu_{q}(t)=\int_{\BZ_{p}}([x]+q^{x}[t])^{n}d \mu_{q}(t)\\
	\nonumber
	&=\sum_{r=0}^{n}\left(\begin{array}{c} n \\ r \end{array}\right)[x]^{n-r}q^{rx}\int_{\BZ_{p}}[t]^{r}d \mu_{q}(t)\\
	\nonumber
	&=\sum_{r=0}^{n}\left(\begin{array}{c} n \\ r \end{array}\right)[x]^{n-r}_{q}q^{rx}B_{r,q}.
\end{align}

\subsubsection{$q-$Volkenborn \cite{ref62}}
For a fermion, the $q$-Volkenborn integral is given as follows:
\begin{align*}
	K_{r,q}=\int_{\BZ_p}[x]^{r}_{q} d\mu_{-q}(x)\quad \quad \quad \mbox{for} \quad r\in\BN.
\end{align*}
Note that
\begin{align*}
	K_{r,q}=[2]_{q}\left( \dfrac{1}{1-q}\right)^{r} \sum_{l=0}^{r}\left(\begin{array}{c} r \\ l \end{array}\right)(-1)^{l}\dfrac{1}{1+q^{l+1}},
\end{align*}
where $\left(\begin{array}{c} r \\ l \end{array}\right)$ is the binomial coefficient.

\subsubsection{Bernoulli, Euler, Genocchi numbers and polynomials \cite{ref90}}
\begin{defn}\label{dfn 2.14}
	Let $n$ be a position integer. Define
	\begin{gather*}
		\mathcal{A}(x,z:p,q)=\sum_{n=0}^{\infty}B_{n}(x:p,q)\dfrac{z^{n}}{[n]_{p,q}!}=\dfrac{z}{e_{p,q}(z)-1}e_{p,q}(xz)\quad \quad (|z| <2\pi),
		\\
		\nonumber
		\mathbb{D}(x,z:p,q)=\sum_{n=0}^{\infty}E_{n}(x:p,q)\dfrac{z^{n}}{[n]_{p,q}!}=\dfrac{[2]_{p,q}}{e_{p,q}(z)+1}e_{p,q}(xz)	\quad \quad (|z|< \pi),\\
		\nonumber
		\mathbb{M}(x,z:p,q)=\sum_{n=0}^{\infty}G_{n}(x:p,q)\dfrac{z^{n}}{[n]_{p,q}!}=\dfrac{[2]_{p,q}z}{e_{p,q}(z)+1}e_{p,q}(xz)	\quad \quad (|z|< \pi),					
	\end{gather*}
	where $B_{n}(x:p,q)$,  $E_{n}(x:p,q)$ and  $G_{n}(x:p,q)$ are the $(p,q)-$Bernoulli polynomials, $(p,q)-$Euler polynomials and $(p,q)-$Genocchi polynomials respectively. 
	For the case where $x=0,$ we obtain 
	$B_{n}(0:p,q)=B_{n}(p,q)$,  $E_{n}(0:p,q)=E_{n}(p,q)$ and  $G_{n}(0:p,q)=G_{n}(p,q),$ which represent $(p,q)-$Bernoulli numbers, $(p,q)-$Euler numbers and $(p,q)-$Genocchi numbers respectively.
\end{defn}

\begin{rem}
	When we set $p=1$, in Definition 2.9,
	 we obtain the $(q)-$ Bernoulli polynomials, $(q)-$Euler polynomials and $(q)-$Genocchi polynomials, respectively.
	The case, where $x=0,$ leads to 
	$B_{n}(0:q)=B_{n}(q)$,  $E_{n}(0:q)=E_{n}(q)$ and  $G_{n}(0:q)=G_{n}(q),$ which represent $(q)-$Bernoulli numbers, $(q)-$Euler numbers and $(q)-$Genocchi numbers, respectively.
	 When we set $p=q \rightarrow 1$, we get the classical case.	
\end{rem}

\subsubsection{Biedenharn-Macfarlane oscillator algebra \cite{ref6}}
The Biedenharn - Macfarlane
$q-$numbers 
\begin{eqnarray*}
	[n]_{q}=\frac{q^n-q^{-n}}{q-q^{-1}}
\end{eqnarray*}
and $q-$factorials:
\begin{eqnarray*}
	[n]!_{q}= \left\{\begin{array}{lr} 1 \quad \mbox{for   } \quad n=0 \quad \\
		\frac{(q;q)_n}{(q-q^{-1})^n} \quad \mbox{for } \quad n\geq
		1, \quad \end{array} \right.
\end{eqnarray*}
generate the following  relevant
properties:
\begin{eqnarray*}
	\,[n]_{q}&=& \sum_{k=0}^{n-1}q^{n-1-2k},\\
	\,[n+m]_{q}&=& q^{-m}\,[n]_{q}+q^n\,[m]_{q}= q^m\,[n]_{q}+q^{-n}\,[m]_{q},\\
	\,[-m]_{q}&=& -[m]_{q},\\
	\,[n-m]_{q}&=& q^{m}\,[n]_{q}-q^{n}[m]_{q}= q^{-m}\,[n]_{q}-q^{-n}\,[m]_{q},\\
	\,[n]_{q}&=& [2]_{q}\,[n-1]_{q}-[n-2]_{q},
\end{eqnarray*}
where  $n$ and $m$ are  integers.

The $q-$ binomial coefficients
\begin{eqnarray*}
	\left[\begin{array}{c} n \\ k \end{array}\right]_{q}=
	\frac{[n]!_{q}}{[k]!_{q}\,[n-k]!_{q}},\quad 0\leq k\leq n;\;\; n\in\mathbb{N}
\end{eqnarray*} 
or
\begin{eqnarray*}
	\left[\begin{array}{c} n \\ k \end{array}\right]_{q}=
	\frac{(q;q)_n}{(q;q)_k(q;,q))_{n-k}},\quad 0\leq k\leq n;\;\; n\in\mathbb{N},
\end{eqnarray*}
where $(q;q)_m = (q-q^{-1})(q^2-q^{-2})\cdots(q^m-q^{-m})$,
$m\in\mathbb{N},$ satisfy the next identities:
\begin{eqnarray*}
	\,\left[\begin{array}{c} n \\ k \end{array}\right]_{q}&=& \left[\begin{array}{c} n \\ n-k \end{array}\right]_{q}=
	q^{k(n-k)}\left[\begin{array}{c} n \\ k \end{array}\right]_{q^{-2}}=
	q^{k(n-k)}\left[\begin{array}{c} n \\ n-k \end{array}\right]_{q^{-2}},\\
	\,\left[\begin{array}{c} n+1 \\ k \end{array}\right]_{q} &=& q^k\left[\begin{array}{c} n \\ k \end{array}\right]_{q}
	+q^{-n-1+k}\left[\begin{array}{c} n \\ k-1 \end{array}\right]_{q},\\
\end{eqnarray*}
\begin{eqnarray*}
	\left[\begin{array}{c} n+1 \\ k \end{array}\right]_{q}&=&
	q^{k}\left[\begin{array}{c} n \\ k \end{array}\right]_{q} +
	q^{n+1-k}\left[\begin{array}{c} n \\ k-1 \end{array}\right]_{q} -(q^n-q^{-n})
	\left[\begin{array}{c} n-1 \\ k-1 \end{array}\right]_{q}
\end{eqnarray*}
with
\begin{eqnarray*}
	\left[\begin{array}{c} n \\ k \end{array}\right]_{q^{-2}}= \frac{(q^{-2}; q^{-2})_n}{(q^{-2}; q^{-2})_k(q^{-2}; q^{-2})_{n-k}},
\end{eqnarray*}
where $(q^{-2}; q^{-2})_n = (1-q^{-2})(1-q^{-4})\cdots (1-q^{-2n})$;
and the $q-$shifted factorial
\begin{eqnarray*}
	((a,b);(q,q^{-1}))_n &:=& (a-b)(aq-bq^{-1})\cdots(aq^{n-1}-bq^{-n+1})
\end{eqnarray*}
or
\begin{eqnarray*}
	((a,b);(q,q^{-1}))_n = \sum_{k=0}^{n}\left[\begin{array}{c} n \\ k \end{array}\right]_{q}(-1)^k\,
	q^{-k(n-k)}a^{n-k}b^k.
\end{eqnarray*}
The generators of the deformed algebra introduced by Biedenharn \cite{ref6} and independently
by Macfarlane \cite{ref57}, in the context of oscillator realization of the quantum
algebra $su_q(2),$ satisfy the following relations:
\begin{eqnarray*}
	A\;A^\dag- q\,A^\dag A= q^{-N}, \quad &&A\;A^\dag- q^{-1}\,A^\dag A= q^N,\qquad 0<q <1\cr
	[N,\;A^\dag]= A^\dag,\quad\qquad\quad&& [N,\;A]= -A.
\end{eqnarray*}

\subsection{Hounkonnou-Bukweli $\mathcal{R}(p,q)-$deformed quantum algebras}\label{HNK}
Let $p$ and $q$ be two positive real numbers such that $ 0<q<p\leq 1.$ We consider a meromorphic function ${\mathcal R}$ defined on $\mathbb{C}\times\mathbb{C}$ by \cite{ref41}: 

\begin{equation}\label{r10}
	\mathcal{R}(u,v)= \sum_{s,t=-l}^{\infty}r_{st}u^sv^t,
\end{equation}
with an eventual isolated singularity at the zero, 
where $r_{st}$ are complex numbers, $l\in\mathbb{N}\cup\left\lbrace 0\right\rbrace,$ $\mathcal{R}(p^n,q^n)>0,  \forall n\in\mathbb{N},$ and $\mathcal{R}(1,1)=0$ by definition. We denote by $\mathbb{D}_{R}$ the bi-disk
\begin{eqnarray*}
	\mathbb{D}_{R}&:=&\prod_{j=1}^{2}\mathbb{D}_{R_j}\nonumber\\
	&=&\left\lbrace w=(w_1,w_2)\in\mathbb{C}^2: |w_j|<R_{j} \right\rbrace,
\end{eqnarray*}
where $R$ is the convergence radius of the series (\ref{r10}) defined by Hadamard formula as follows:
\begin{eqnarray*}
	\lim\sup_{s+t \longrightarrow \infty} \sqrt[s+t]{|r_{st}|R^s_1\,R^t_2}=1.
\end{eqnarray*}
For the proof and more details see \cite{ref41}. Let us also consider $\mathcal{O}(\mathbb{D}_{R})$ the set of holomorphic functions defined on $\mathbb{D}_{R}.$
Define the  $\mathcal{R}(p,q)-$ deformed numbers  \cite{ref41}:
\begin{equation}\label{rpqnumber}
	[n]_{\mathcal{R}(p,q)}:=\mathcal{R}(p^n,q^n),\quad n\in\mathbb{N},
\end{equation}
 the
$\mathcal{R}(p,q)-$ deformed factorials:
\begin{equation*}\label{s0}
	[n]!_{\mathcal{R}(p,q)}:=\left \{
	\begin{array}{l}
		1\quad\mbox{for}\quad n=0\\
		\\
		\mathcal{R}(p,q)\cdots\mathcal{R}(p^n,q^n)\quad\mbox{for}\quad n\geq 1,
	\end{array}
	\right .
\end{equation*}
and the  $\mathcal{R}(p,q)-$ deformed binomial coefficients:
\begin{eqnarray*}\label{bc}
	\bigg[\begin{array}{c} m  \\ n\end{array} \bigg]_{\mathcal{R}(p,q)} := \frac{[m]!_{\mathcal{R}(p,q)}}{[n]!_{\mathcal{R}(p,q)}[m-n]!_{\mathcal{R}(p,q)}},\quad m,n=0,1,2,\cdots,\quad m\geq n
\end{eqnarray*}
satisfying the relation:
\begin{equation*}
	\bigg[\begin{array}{c} m  \\ n\end{array} \bigg]_{\mathcal{R}(p,q)}=\bigg[\begin{array}{c} m  \\ m-n\end{array} \bigg]_{\mathcal{R}(p,q)},\quad m,n=0,1,2,\cdots,\quad m\geq n.
\end{equation*}
Consider the following linear operators defined on  $\mathcal{O}(\mathbb{D}_{R})$ by (see \cite{ref41} for more details):
\begin{eqnarray*}
	\;Q:\varPsi\longmapsto Q\varPsi(z):&=& \varPsi(qz),\\
	\; P:\varPsi\longmapsto P\varPsi(z):&=& \varPsi(pz),
\end{eqnarray*}
and the $\mathcal{R}(p,q)-$ derivative given by:
\begin{equation}\label{r5}
	\partial_{\mathcal{R}( p,q)}:=\partial_{p,q}\frac{p-q}{P-Q}\mathcal{R}( P,Q)=\frac{p-q}{p^{P}-q^{Q}}\mathcal{R}(p^{P},q^{Q})\partial_{p,q}.
\end{equation}
The following conditions can help retrieve some relevant $q-$analogues and $(p,q)-$analogues:
\begin{itemize}\label{HRi}
	\item[(a)] 
	Setting	$\mathcal{R}(1,q)=1$, we obtain the $q-$ Heine derivative \cite{ref32}
	\begin{equation*}
		\partial_{q}\varPsi(z)=\frac{\varPsi(z)-\varPsi(qz)}{z(1-q)}.
	\end{equation*} 
	\item[(b)] Setting $\mathcal{R}(1,q)={1-q^{-1} \over q-1}$ tends to the $q-$ Quesne  derivative \cite{ref79}
	\begin{equation*}
		\partial_{q}\varPsi(z)=\frac{\varPsi(z)-\varPsi(q^{-1}z)}{z(q-1)}.
	\end{equation*}
	\item [(c)]Setting $\mathcal{R}(1,q)=1$ gives the
	$q-$ Biedenharn-Macfarlane derivative \cite{ref6}
	\begin{equation*}
		\partial_{q}\varPsi(z)=\frac{\varPsi(qz)-\varPsi(q^{-1}z)}{z(q-q^{-1})}.
	\end{equation*}
	\item[(d)] Setting $\mathcal{R}(p,q)=1$ affords the
	$(p,q)-$ Jagannathan-Srinivasa derivative \cite{ref75}
	\begin{equation*}\label{J.S}
		\partial_{p,q}\varPsi(z)=\frac{\varPsi(pz)-\varPsi(qz)}{z(p-q)}.
	\end{equation*}
	\item[(e)] Setting $\mathcal{R}(p,q)={1-p\,q \over (p^{-1}-q)p}$ gives the $(p^{-1},q)-$ Chakrabarty - Jagannathan derivative \cite{ref10,ref11}
	\begin{eqnarray*}	
		\partial_{p^{-1},q}\varPsi(z)=\frac{\varPsi(p^{-1}z)-\varPsi(qz)}{z(p^{-1}-q)}.
	\end{eqnarray*}
	\item[(f)]Setting $\mathcal{R}(p,q)={p\,q-1 \over (q-p^{-1})q}$
	we arrive at the Hounkonnou-Ngompe  generalization of $q-$ Quesne derivative \cite{ref44}
	\begin{eqnarray*}
		\partial_{p,q}\varPsi(z)=\frac{\varPsi(pz)-\varPsi(q^{-1}z)}{z(q-p^{-1})}.
	\end{eqnarray*}
\end{itemize}
The  algebra associated with the $\mathcal{R}(p,q)-$ deformation is a quantum algebra, denoted $\mathcal{A}_{\mathcal{R}(p,q)},$ generated by the set of operators $\{1, A, A^{\dagger}, N\}$ satisfying the following commutation relations:
\begin{eqnarray*}
	&& \label{algN1}
	\quad A A^\dag= [N+1]_{\mathcal {R}(p,q)},\quad\quad\quad A^\dag  A = [N]_{\mathcal {R}(p,q)}.
	\cr&&\left[N,\; A\right] = - A, \qquad\qquad\quad \left[N,\;A^\dag\right] = A^\dag
\end{eqnarray*}
with its realization on  ${\mathcal O}(\mathbb{D}_R)$ given by:
\begin{eqnarray*}\label{algNa}
	A^{\dagger} := z,\qquad A:=\partial_{\mathcal {R}(p,q)}, \qquad N:= z\partial_z,
\end{eqnarray*} 
where $\partial_z:=\frac{\partial}{\partial z}$ is the usual derivative on $\mathbb{C}.$
\section{Hounkonnou-Bukweli $\mathcal {R}(p,q)$ exponential functions and trignometric functions}
The Hounkonnou $\mathcal {R}(p,q)$-exponential functions is given by the following;
\begin{enumerate}
	\item [(1)]
	\begin{gather}\label{E4.4}
		\mathbb{E}_{\mathcal {R}(p,q)}(z)=\sum_{n=0}^{\infty}\dfrac{\xi_{2}^{\left(\begin{array}{ccc}n\\2\end{array}\right)}}{[n]_{\mathcal {R}(p,q)}!}z^{n},\quad \mbox{and} \quad
		e_{\mathcal {R}(p,q)}(z)=\sum_{n=0}^{\infty}\dfrac{\xi_{1}^{\left(\begin{array}{ccc}n\\2\end{array}\right)}}{[n]_{\mathcal {R}(p,q)}!}z^{n},
	\end{gather}
	with $\mathbb{E}_{\mathcal {R}(p,q)}(-z)= 	e_{\mathcal {R}(p,q)}(z)=1$.\\
	One can easily retrieve the $(p,q)-$analogue just in the case of \cite{ref75}. We obtain the Jagannathan-Srinivasa $(p,q)-$exponential functions with $\xi_{2}=q$ and $\xi_{1}=p$. If $\lim_{p\rightarrow 1},$ we have the analogue $q$-exponential functions, such as, $\mathbb{E}_{q}(z)$ and $e_{q}(z).$ When
	$\lim_{p,q\longrightarrow 1},$
	we obtain the usual exponential function $e^{z}$. Note that  
	
	\begin{eqnarray*}
		\mathbb{E}_{q}(z)=e_{q^{-1}}(z)\\
		e_{q^{-1}}(z)=\mathbb{E}_{q}(z)\\
		e_{(p,q)^{-1}}(z)\mathbb{E}_{p,q}(-z)=1\\
		e_{(p^{-1},q^{-1})}(z)=\mathbb{E}_{p,q}(z).
	\end{eqnarray*}
	In the case of the complex numbers and trigonometric functions, we get the following identities:	
	\begin{enumerate}
		\item [(1a)] 
		\begin{eqnarray*}
			\mathbb{E}_{\mathcal{R}(p,q)}(iz)&=&\sum_{n=0}^{\infty}\dfrac{\xi_{2}^{\left(\begin{array}{ccc}n\\2\end{array}\right)}(iz)^{n}}{\mathcal{R!}(p^{n},q^{n})}\\ \nonumber
			&=&\sum_{n=0}^{\infty}\dfrac{(-1)^{n}(z)^{2n}\xi_{2}^{(2n-1)n}}{\mathcal{R!}(p^{2n},q^{2n})}+i\sum_{n=0}^{\infty}\dfrac{(-1)^{n}(z)^{2n+1}\xi_{2}^{(2n+1)n}}{\mathcal{R!}(p^{2n+1},q^{2n+1})},
		\end{eqnarray*}
		
		\begin{eqnarray*}
			\mathbb{E}_{\mathcal{R}(p,q)}(-iz)&=&\sum_{n=0}^{\infty}\dfrac{\xi_{2}^{\left(\begin{array}{ccc}n\\2\end{array}\right)}(-iz)^{n}}{\mathcal{R!}(p,q)}\\ \nonumber
			&=&\sum_{n=0}^{\infty}\dfrac{(-1)^{n}(z)^{2n}\xi_{2}^{(2n-1)n}}{\mathcal{R!}(p^{2n},q^{2n})}-i\sum_{n=0}^{\infty}\dfrac{(-1)^{n}(z)^{2n+1}\xi_{2}^{(2n+1)n}}{\mathcal{R!}(p^{2n+1},q^{2n+1})},
		\end{eqnarray*}
		where
		
		\begin{gather*}
			\sum_{n=0}^{\infty}\dfrac{(-1)^{n}(z)^{2n}\xi_{2}^{(2n-1)n}}{\mathcal{R!}(p^{2n},q^{2n})}=\dfrac{\mathbb{E}_{\mathcal{R}(p,q)}^{(iz)}+\mathbb{E}_{\mathcal{R}(p,q)}^{-(iz)}}{2}=\mathbb{COS}_{\mathcal{R}(p,q)}(z),\\ \nonumber	\sum_{n=0}^{\infty}\dfrac{(-1)^{n}(z)^{2n+1}\xi_{2}^{(2n+1)n}}{\mathcal{R!}(p^{2n+1},q^{2n+1})}=\dfrac{\mathbb{E}_{\mathcal{R}(p,q)}^{(iz)}-\mathbb{E}_{\mathcal{R}(p,q)}^{-(iz)}}{2i}=\mathbb{SIN}_{\mathcal{R}(p,q)}(z).
		\end{gather*}
		In the case of the hyperbolic functions, we have:
		\begin{gather*}
			\sum_{n=0}^{\infty}\dfrac{(z)^{2n}\xi_{2}^{(2n-1)n}}{\mathcal{R!}(p^{2n},q^{2n})}=\dfrac{\mathbb{E}_{\mathcal{R}(p,q)}^{(z)}+\mathbb{E}_{\mathcal{R}(p,q)}^{-(z)}}{2}=	\mathbb{COSH}_{\mathcal{R}(p,q)}(z), \\
			\nonumber 	\sum_{n=0}^{\infty}\dfrac{(z)^{2n+1}\xi_{2}^{(2n+1)n}}{\mathcal{R!}(p^{2n+1},q^{2n+1})}=\dfrac{\mathbb{E}_{\mathcal{R}(p,q)}^{(z)}-\mathbb{E}_{\mathcal{R}(p,q)}^{-(z)}}{2i}=	\mathbb{SINH}_{\mathcal{R}(p,q)}(z).
		\end{gather*}
		
		
		As a consequence, we get:
		
		\begin{align*}
			\mathbb{E}_{\mathcal{R}(p,q)}^{(iz)}= \mathbb{COS}_{\mathcal{R}(p,q)}(z)+i \mathbb{SIN}_{\mathcal{R}(p,q)}(z)
		\end{align*}
		
		\item [(1b)] Similarly, from the equation \eqref{E4.4}:
		\begin{eqnarray*}
			e_{\mathcal{R}(p,q)}(iz)&=&\sum_{n=0}^{\infty}\dfrac{\xi_{1}^{\left(\begin{array}{ccc}n\\2\end{array}\right)}(iz)^{n}}{\mathcal{R!}(p^{n},q^{n})}\\ \nonumber
			&=&\sum_{n=0}^{\infty}\dfrac{(-1)^{n}(z)^{2n}\xi_{1}^{(2n-1)n}}{\mathcal{R!}(p^{2n},q^{2n})}+i\sum_{n=0}^{\infty}\dfrac{(-1)^{n}(z)^{2n+1}\xi_{1}^{(2n+1)n}}{\mathcal{R!}(p^{2n+1},q^{2n+1})}
		\end{eqnarray*}
		
		\begin{eqnarray*}
			e_{\mathcal{R}(p,q)}(-iz)&=&\sum_{n=0}^{\infty}\dfrac{\xi_{1}^{\left(\begin{array}{ccc}n\\2\end{array}\right)}(-iz)^{n}}{\mathcal{R!}(p,q)}\\ \nonumber
			&=&\sum_{n=0}^{\infty}\dfrac{(-1)^{n}(z)^{2n}\xi_{1}^{(2n-1)n}}{\mathcal{R!}(p^{2n},q^{2n})}-i\sum_{n=0}^{\infty}\dfrac{(-1)^{n}(z)^{2n+1}\xi_{1}^{(2n+1)n}}{\mathcal{R!}(p^{2n+1},q^{2n+1})}
		\end{eqnarray*}
		where
		
		\begin{align*}
			\sum_{n=0}^{\infty}\dfrac{(-1)^{n}(z)^{2n}\xi_{2}^{(2n-1)n}}{\mathcal{R!}(p^{2n},q^{2n})}=\dfrac{e_{\mathcal{R}(p,q)}^{(iz)}+e_{\mathcal{R}(p,q)}^{-(iz)}}{2}=\cos_{\mathcal{R}(p,q)}(z),\\ \nonumber	\sum_{n=0}^{\infty}\dfrac{(-1)^{n}(z)^{2n+1}\xi_{2}^{(2n+1)n}}{\mathcal{R!}(p^{2n+1},q^{2n+1})}=\dfrac{e_{\mathcal{R}(p,q)}^{(iz)}-e_{\mathcal{R}(p,q)}^{-(iz)}}{2i}=\sin_{\mathcal{R}(p,q)}(z).
		\end{align*}
		In the case of the hyperbolic functions, we have;
		\begin{align*}
			\sum_{n=0}^{\infty}\dfrac{(z)^{2n}\xi_{2}^{(2n-1)n}}{\mathcal{R!}(p^{2n},q^{2n})}=\dfrac{e_{\mathcal{R}(p,q)}^{(z)}+e_{\mathcal{R}(p,q)}^{-(z)}}{2}=	\cosh_{\mathcal{R}(p,q)}(z), \\
			\nonumber 	\sum_{n=0}^{\infty}\dfrac{(z)^{2n+1}\xi_{2}^{(2n+1)n}}{\mathcal{R!}(p^{2n+1},q^{2n+1})}=\dfrac{e_{\mathcal{R}(p,q)}^{(z)}-e_{\mathcal{R}(p,q)}^{-(z)}}{2i}=	\sinh_{\mathcal{R}(p,q)}(z).
		\end{align*}
		
		
		As a consequnce, we obtain the following:
		
		\begin{align*}
			e_{\mathcal{R}(p,q)}^{(iz)}= \cos_{\mathcal{R}(p,q)}(z)+i \sin_{\mathcal{R}(p,q)}(z).
		\end{align*}
		Similarly,
		
		\begin{eqnarray*}
			\tan_{\mathcal{R}(p,q)}(z)=\dfrac{\sin_{\mathcal{R}(p,q)}(z)}{\cos_{\mathcal{R}(p,q)}(z)} \quad \mbox{and} \quad  \mathbb{TAN}_{\mathcal{R}(p,q)}(z)=\dfrac{\mathbb{SIN}_{\mathcal{R}(p,q)}(z)}{\mathbb{COS}_{\mathcal{R}(p,q)}(z)}.
		\end{eqnarray*}
	\end{enumerate}	
\end{enumerate}	
The Euler polynomials with respect to the $\mathcal{R}(p,q)-$trigonometry are defined by: 
\begin{align*}
	\dfrac{[2]_{\mathcal{R}(p,q)}}{e_{\mathcal{R}(p,q)}^{z}+1}e_{\mathcal{R}(p,q)}^{zt}&=\sum_{n=0}^{\infty}E^{p}_{n}\dfrac{z^{n}}{[n]_{\mathcal{R}(p,q)}!} \quad \mbox{and} \\
	\dfrac{[2]_{\mathcal{R}(p,q)}}{\mathbb{E}_{\mathcal{R}(p,q)}^{z}+1}\mathbb{E}_{\mathcal{R}(p,q)}^{zt}&=\sum_{n=0}^{\infty}E^{q}_{n}\dfrac{z^{n}}{[n]_{\mathcal{R}(p,q)}!} 
\end{align*}	
is the $n$-th Euler numbers whereas
\begin{align*}
	\dfrac{[2]_{\mathcal{R}(p,q)}}{e_{\mathcal{R}(p,q)}^{z}+e_{\mathcal{R}(p,q)}^{(-z)}}&=\sum_{n=0}^{\infty}E^{p*}_{n}\dfrac{z^{n}}{[n]_{\mathcal{R}(p,q)}!} \quad \mbox{and} \\
	\dfrac{[2]_{\mathcal{R}(p,q)}}{\mathbb{E}_{\mathcal{R}(p,q)}^{z}+\mathbb{E}_{\mathcal{R}(p,q)}^{(-z)}}&=\sum_{n=0}^{\infty}E^{q*}_{n}\dfrac{z^{n}}{[n]_{\mathcal{R}(p,q)}!}\\
\end{align*}
In particular, 

\begin{eqnarray*}	
	\sech_{\mathcal{R}(p,q)}(z)=\dfrac{1}{\cos_{\mathcal{R}(p,q)}(z)}=\dfrac{[2]_{\mathcal{R}(p,q)}}{e_{\mathcal{R}(p,q)}^{z}+e_{\mathcal{R}(p,q)}^{(-z)}}=\sum_{n=0}^{\infty}E^{*}_{n}\dfrac{z^{n}}{[n]_{\mathcal{R}(p,q)}!} \\ \mathbb{SECH}_{\mathcal{R}(p,q)}(z)=\dfrac{1}{\mathbb{COS}_{\mathcal{R}(p,q)}(z)}=\dfrac{[2]_{\mathcal{R}(p,q)}}{\mathbb{E}_{\mathcal{R}(p,q)}^{z}+\mathbb{E}_{\mathcal{R}(p,q)}^{(-z)}}=\sum_{n=0}^{\infty}E^{*}_{n}\dfrac{z^{n}}{[n]_{\mathcal{R}(p,q)}!}\\
\end{eqnarray*}
where for $n\geq0$, $E^{*}_{n}=2^{n}E^{*}_{n}(\frac{1}{2}).$

\begin{eqnarray*}
	\csc_{\mathcal{R}(p,q)}(z)=\dfrac{1}{\sin_{\mathcal{R}(p,q)}(z)} \quad \mbox{and}\quad \mathbb{CSC}_{\mathcal{R}(p,q)}(z)=\dfrac{1}{\mathbb{SIN}_{\mathcal{R}(p,q)}(z)}
\end{eqnarray*}

\begin{eqnarray*}
	\tanh_{\mathcal{R}(p,q)}(z)=\dfrac{\sinh_{\mathcal{R}(p,q)}(z)}{\cosh_{\mathcal{R}(p,q)}(z)} \quad \mbox{and} \quad  \mathbb{TANH}_{\mathcal{R}(p,q)}(z)=\dfrac{\mathbb{SINH}_{\mathcal{R}(p,q)}(z)}{\mathbb{COSH}_{\mathcal{R}(p,q)}(z)}\\
	\coth_{\mathcal{R}(p,q)}(z)=\dfrac{\cosh_{\mathcal{R}(p,q)}(z)}{\sinh_{\mathcal{R}(p,q)}(z)} \quad \mbox{and} \quad  \mathbb{COTH}_{\mathcal{R}(p,q)}(z)=\dfrac{\mathbb{COSH}_{\mathcal{R}(p,q)}(z)}{\mathbb{SINH}_{\mathcal{R}(p,q)}(z)}	
\end{eqnarray*}

\begin{defn}
	Let $n$ be a non-negative integer. The Hounkonnou \\$\mathcal{R}(p,q)-$analogue of Euler's zigzag numbers are given by
	
	\begin{gather*}
		F(x)=\sum_{n=0}^{\infty}A_{n}\dfrac{x^{n}}{[n]_{\mathcal{R}(p,q)}!},
	\end{gather*}
	with  $$\sum_{n=0}^{\infty}A_{n}\dfrac{x^{n}}{[n]_{\mathcal{R}(p,q)}!}=\sum_{n=0}^{\infty}A_{2n}\dfrac{x^{2n}}{[2n]_{\mathcal{R}(p,q)}!}+\sum_{n=0}^{\infty}A_{2n-1}\dfrac{x^{2n-1}}{[2n-1]_{\mathcal{R}(p,q)}!},$$
	
	where
	\begin{eqnarray*}
		\sec_{\mathcal{R}(p,q)}(x)=\sum_{n=0}^{\infty}A_{2n}\dfrac{x^{2n}}{[2n]_{\mathcal{R}(p,q)}!}=\sum_{n=1}^{\infty}S_{n}(x:\mathcal{R}(p,q)) \quad \quad \mbox{and}\\	 
		\tan_{\mathcal{R}(p,q)}(x)=\sum_{n=0}^{\infty}A_{2n-1}\dfrac{x^{2n-1}}{[2n-1]_{\mathcal{R}(p,q)}!}=\sum_{n=1}^{\infty}T_{n}(x:\mathcal{R}(p,q)),
	\end{eqnarray*}
	where $T_{n}$ is the tangent number.
\end{defn}

\subsection{Hounkonnou-Bukweli $\mathcal{R}(p,q)-$derivative}
It is defined from the following linear operators on  $\mathcal{O}(\mathbb{D}_{R}):$
\begin{eqnarray*}
	\;Q:\varPsi\longmapsto Q\varPsi(z):&=& \varPsi(qz),\\
	\; P:\varPsi\longmapsto P\varPsi(z):&=& \varPsi(pz),\\
	\; \partial_{p,q}:\varPsi\longmapsto \partial_{p,q} \varPsi(z):&=& \dfrac{\varPsi(pz)-\varPsi(qz)}{(p-q)z}.
\end{eqnarray*}
If we set $d_{\mathcal{R}(p,q)}=(dz)\partial_{\mathcal{R}( p,q)}$ and from equation \eqref{algN1}, then  follow the properties:

\begin{enumerate}
	\item [(i)] $d_{\mathcal{R}(p,q)} 1=0$
	\item [(ii)]$d_{\mathcal{R}(p,q)}z=(dz)\mathcal{R}(p,q)$
	\item [(iii)] $d_{\mathcal{R}(p,q)}\partial_{\mathcal{R}( p,q)}=(dz)\partial_{\mathcal{R}(p,q)}^{2}$
	\item [(iv)] $d_{\mathcal{R}(p,q)}(z \partial_{z})=dz(z\partial_{z}+1)\partial_{\mathcal{R}( p,q)}$
	\item [(v)] $d_{\mathcal{R}(p,q)}^{2}=0$	
\end{enumerate} 
For a non-negative integer $n:$
\begin{enumerate}
	\item [(i)]$d_{\mathcal{R}(p,q)}(z^{n})=dz \mathcal{R}(p^{n},q^{n})z^{n-1}$
	\item [(ii)]  $d_{\mathcal{R}(p,q)}(z \partial_{z})^{n}=dz(z\partial_{z}+1)^{n}\partial_{\mathcal{R}( p,q)}$
	\item [(iii)] $d_{\mathcal{R}(p,q)}\partial_{\mathcal{R}( p,q)}^{n}=(dz)\partial_{\mathcal{R}(p,q)}^{n+1}$
\end{enumerate}
\bigskip
The $\mathcal{R}(p,q)-$ derivative is then defined by:
\begin{equation}\label{r5}
	\partial_{\mathcal{R}( p,q)}:=\partial_{p,q}\frac{p-q}{P-Q}\mathcal{R}( P,Q)=\frac{p-q}{p^{P}-q^{Q}}\mathcal{R}(p^{P},q^{Q})\partial_{p,q}
\end{equation}

Let $f,$ 
 $f\in \mathcal{O}(\mathbb{D}_{R}),$ then:
$$ d_{\mathcal{R}(p,q)}=(dz)\partial_{\mathcal{R}( p,q)} f(z).$$
The following identities are true:
\begin{itemize}
	\item [(a)]$d_{\mathcal{R}(p,q)}(fg)=(d_{z})\frac{p-q}{p^{P}-q^{Q}}\mathcal{R}(p^{P},q^{Q})\partial_{p,q}\left\{\partial_{p,q}(f)(Pg)+(Qf)(\partial_{p,q}(g))  \right\},$ 
	
	\item [(b)]$d_{\mathcal{R}(p,q)}(fg)=(d_{z})\frac{p-q}{p^{P}-q^{Q}}\mathcal{R}(p^{P},q^{Q})\partial_{p,q}\left\{\partial_{p,q}(f)(Qg)+(Pf)(\partial_{p,q}(g))  \right\},$ 
	
\end{itemize}
for $f,g\in \mathcal{O}(\mathbb{D}_{R}).$	
\\
We define the operator $I_{\mathcal{R}( p,q)}$ over $\mathcal{O}(\mathbb{D}_{R})$ as the inverse image of the \\
$\mathcal{R}( p,q)-$derivative. For the elements $z^{n}$ of the basis of $\mathcal{O}(\mathbb{D}_{R})$, $I_{\mathcal{R}( p,q)}$ acts as follows:
\begin{align}
	I_{\mathcal{R}( p,q)}z^{n}:=(\partial_{\mathcal{R}( p,q)})^{-1}z^{n}=\dfrac{1}{[N+1]_{\mathcal{R}( p,q)}}z^{n+1}+c,
\end{align}
where $n\geq 0$ and $c$ is an integration constant. 
If $f\in\mathcal{O}(\mathbb{D}_{R}),$ then 

\begin{align*}
	I_{\mathcal{R}( p,q)}\partial_{\mathcal{R}( p,q)}f(z)=f(z)+c,
	\\
	\partial_{\mathcal{R}( p,q)}I_{\mathcal{R}( p,q)}f(z)=f(z)+c^{\prime},
\end{align*}
where $c$ and $c^{\prime}$ are integrated constants. Provided $\mathcal{R}(P,Q)$ is invertible, one can define the $\mathcal{R}(p,q)-$integration by the following formula:

\begin{align}
	I_{\mathcal{R}( p,q)}= R^{-1}(P,Q)z.
\end{align}

We can also derive the definite Integrals:
\begin{align*}
	\int_{\alpha}^{\beta}f(z)d_{\mathcal{R}(p,q)}z=I_{\mathcal{R}( p,q)}f(\beta)- I_{\mathcal{R}( p,q)}f(\alpha)\\
	\int_{\alpha}^{+\infty}f(z)d_{\mathcal{R}(p,q)}z=\lim_{n\longrightarrow\infty}\int_{\alpha}^{\dfrac{p^{n}}{q^{n}}}f(z)d_{\mathcal{R}(p,q)}
	\\
	\int_{-\infty}^{+\infty}f(z)d_{\mathcal{R}(p,q)}z=\lim_{n\longrightarrow\infty}\int_{-\dfrac{p^{n}}{q^{n}}}^{\dfrac{p^{n}}{q^{n}}}f(z)d_{\mathcal{R}(p,q)}
\end{align*}
Integrating  by parts:
\begin{align*}
	I_{\mathcal{R}( p,q)}\partial_{\mathcal{R}( p,q)}(f(z)g(z))&=f(z)g(z)+c	\\
	=	& I_{\mathcal{R}( p,q)} \left\lbrace \frac{p-q}{p^{P}-q^{Q}}\mathcal{R}(p^{P},q^{Q})\partial_{p,q}\left\{\partial_{p,q}(f)(Pg)  \right\}\right\rbrace \\
	&\left. +I_{\mathcal{R}( p,q)} \left\lbrace\frac{p-q}{p^{P}-q^{Q}}\mathcal{R}(p^{P},q^{Q})(Qf)(\partial_{p,q}(g)) \right\rbrace,\right. \\
	I_{\mathcal{R}( p,q)}\partial_{\mathcal{R}( p,q)}(f(z)g(z))&=f(z)g(z)+c\\
	=&	I_{\mathcal{R}( p,q)} \left\lbrace \frac{p-q}{p^{P}-q^{Q}}\mathcal{R}(p^{P},q^{Q})\partial_{p,q}\left\{\partial_{p,q}(f)(Qg)  \right\}\right\rbrace\\
	&\left. +I_{\mathcal{R}(p,q)} \left\lbrace\frac{p-q}{p^{P}-q^{Q}}\mathcal{R}(p^{P},q^{Q})(Pf)(\partial_{p,q}(g)) \right\rbrace. \right.\\
\end{align*}

\subsection{ $\mathcal{R}(p,q)-$analogue of Sadjang $(p,q)-$integrals}
Provided the above results, we consider a formal power series $$f(z)=\sum_{n=0}^{\infty}a_{n}z^{n}.$$ Then, its anti-derivative will be given by
\begin{equation}
	\int f(z)d_{\mathcal{R}(p,q)}z=\sum_{n=0}^{\infty}\dfrac{a_{n}z^{n+1}}{[n+1]_{\mathcal{R}( p,q)}}+C.
\end{equation}
Let $f(z)$ and $F(x)$ be arbitrary functions such that
\begin{align*}
	d_{\mathcal{R}(p,q)}F(z)&=\partial_{\mathcal{R}( p,q)}F(z) \frac{p-q}{P-Q}\mathcal{R}(P,Q) \\
	&=\frac{p-q}{p^{P}-q^{Q}}\mathcal{R}(p^{P},q^{Q}) \partial_{( p,q)}F(z)\\
	&=\frac{p-q}{p^{P}-q^{Q}}\mathcal{R}(p^{P},q^{Q})\dfrac{F(pz)-F(qz)}{(p-q)z}\\
	&=\frac{p-q}{p^{P}-q^{Q}}\mathcal{R}(p^{P},q^{Q})f(z).
\end{align*}

Thus, 
\begin{align}
	\frac{p-q}{p^{P}-q^{Q}}\mathcal{R}(p^{P},q^{Q})	\dfrac{F(pz)-F(qz)}{(p-q)z}=f(z), 
\end{align}
which can be rewritten  as:
\begin{align*}
	\frac{p-q}{p^{P}-q^{Q}}\mathcal{R}(p^{P},q^{Q})	F(pz)-F(qz)=(p-q)zf(z). 
\end{align*}
Setting $(p-q)=\eta,$ then,
\begin{align*}
	\frac{p-q}{p^{P}-q^{Q}}\mathcal{R}(p^{P},q^{Q})	F(pz)-F(qz)=\eta zf(z).
\end{align*}
By induction, one can obtain the following:
\begin{align*}
	\frac{p^{1}-q^{1}}{p^{P}-q^{Q}}\mathcal{R}(p^{P},q^{Q})	F(pq^{-1}z)-F(p^{0}q^{0}z)&=\eta p^{0}q^{-1} zf(p^{0}q^{-1} z),
	\\
	\frac{p^{2}-q^{2}}{p^{P}-q^{Q}}\mathcal{R}(p^{P},q^{Q})	F(p^{2}q^{-2}z)-F(p^{1}q^{-1}z)&=\eta p^{1}q^{-2} zf(p^{1}q^{-2} z),
	\\
	&\vdots
	\\
	\frac{p^{n}-q^{n}}{p^{P}-q^{Q}}\mathcal{R}(p^{P},q^{Q})	F(p^{n+1}q^{-(n+1)}z)-F(p^{n}q^{-n}z)&=\eta p^{n}q^{-(n+1)} z \\
	&\times f(p^{n}q^{-(n+1)} z).
\end{align*}
It is easy to see that:
$$F(p^{n+1}q^{-(n+1)}z)-F(z)=\frac{p^{P}-q^{Q}}{p^{n}-q^{n}}\mathcal{R^{-\prime}}(p^{P},q^{Q}) (p-q)z\sum_{r=0}^{n} f(p^{r}q^{-(r+1)} z).$$
Suppose $\mid p \mid < \mid q \mid$ as $n\longrightarrow \infty.$ Then,
\begin{align*}
	F(x)-F(0)= \frac{p^{P}-q^{Q}}{p^{n}-q^{n}}\mathcal{R^{-\prime}}(p^{P},q^{Q})(q-p)z\sum_{r=0}^{n}p^{r}/ q^{r+1} f(p^{r}/q^{(r+1)} z).
\end{align*}
Similarly, for $\mid q \mid < \mid p \mid$, we have 

\begin{align*}\label{3.2}
	F(x)-F(0)= \frac{p^{P}-q^{Q}}{p^{n}-q^{n}}\mathcal{R^{-\prime}}(p^{P},q^{Q})(p-q)z\sum_{r=0}^{n}q^{r}/ p^{r+1} f(q^{r}/p^{(r+1)} z).
\end{align*}
This leads to the following:
\begin{thm}
	\begin{align}
		\int f(z)d_{\mathcal{R}(p,q)}z=\frac{p^{P}-q^{Q}}{p^{n}-q^{n}}\dfrac{(p-q)z}{\mathcal{R}(p^{P},q^{Q})}\sum_{r=0}^{n}q^{r}/ p^{r+1} f(q^{r}/p^{(r+1)} z).
	\end{align}
\end{thm}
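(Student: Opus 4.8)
The plan is to realise the antiderivative $\int f(z)\,d_{\mathcal{R}(p,q)}z$ as a telescoping limit built from an $\mathcal{R}(p,q)$-antiderivative $F$ of $f$. First I would take $F\in\mathcal{O}(\mathbb{D}_{R})$ with $\partial_{\mathcal{R}(p,q)}F=f$; by the definition of the $\mathcal{R}(p,q)$-derivative this antiderivative condition is exactly the functional identity
\begin{equation*}
	\frac{p-q}{p^{P}-q^{Q}}\mathcal{R}(p^{P},q^{Q})\,\frac{F(pz)-F(qz)}{(p-q)z}=f(z),
\end{equation*}
which rearranges into a first-order $(p,q)$-difference equation relating $F(pz)$ and $F(qz)$. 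Setting $\eta=p-q$, this single relation is what I would iterate.

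Next I would substitute $z\mapsto p^{k}q^{-k}z$ for $k=0,1,\dots,n$ into that relation, producing the chain of $n+1$ identities displayed above, the $k$-th of which links $F(p^{k+1}q^{-(k+1)}z)$ to $F(p^{k}q^{-k}z)$ through the operator factor $\dfrac{p^{k}-q^{k}}{p^{P}-q^{Q}}\mathcal{R}(p^{P},q^{Q})$ and a single evaluation of $f$ at the shifted point $p^{k}q^{-(k+1)}z$. Adding the lines makes the $F$-terms telescope, and after isolating the sum, using the invertibility of $\mathcal{R}(p^{P},q^{Q})$ (whose inverse I write $\mathcal{R}^{-\prime}(p^{P},q^{Q})$), I obtain
\begin{equation*}
	F(p^{n+1}q^{-(n+1)}z)-F(z)=(p-q)z\,\frac{p^{P}-q^{Q}}{p^{n}-q^{n}}\,\mathcal{R}^{-\prime}(p^{P},q^{Q})\sum_{r=0}^{n}f(p^{r}q^{-(r+1)}z).
\end{equation*}
The integral is then read off as the surviving finite sum once the boundary term is controlled.

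The main obstacle is the limit $n\to\infty$ together with the attendant convergence bookkeeping. Since the iterated argument is $p^{n+1}q^{-(n+1)}z=(p/q)^{n+1}z$, the term $F(p^{n+1}q^{-(n+1)}z)$ tends to $F(0)$ precisely when $|p|<|q|$; in the opposite regime $|q|<|p|$ one must instead run the dual iteration with $z\mapsto q^{k}p^{-k}z$, so that the argument $(q/p)^{n+1}z\to 0$ and the boundary term again collapses to $F(0)$. I would then need to verify that every shifted point $q^{r}p^{-(r+1)}z$ stays inside $\mathbb{D}_{R}$, keeping $F$ and $f$ holomorphic and the partial sums convergent, and that $\mathcal{R}(p^{P},q^{Q})$ remains invertible along the orbit. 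Carrying out the $|q|<|p|$ case and cancelling $F(0)$ against $F(x)$ then yields exactly the stated formula
\begin{equation*}
	\int f(z)\,d_{\mathcal{R}(p,q)}z=\frac{p^{P}-q^{Q}}{p^{n}-q^{n}}\,\frac{(p-q)z}{\mathcal{R}(p^{P},q^{Q})}\sum_{r=0}^{n}q^{r}/p^{r+1}\,f(q^{r}/p^{(r+1)}z),
\end{equation*}
the $\mathcal{R}(p,q)$-deformation of Sadjang's $(p,q)$-integral.
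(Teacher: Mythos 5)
Your proposal reproduces the paper's own derivation essentially step for step: the same antiderivative relation $\partial_{\mathcal{R}(p,q)}F=f$ rewritten as a first-order $(p,q)$-difference equation with $\eta=p-q$, the same iterated chain of shifted identities telescoping to $F(p^{n+1}q^{-(n+1)}z)-F(z)$ with the factor $\frac{p^{P}-q^{Q}}{p^{n}-q^{n}}\mathcal{R}^{-\prime}(p^{P},q^{Q})$, and the same case split $|p|<|q|$ versus $|q|<|p|$ in which the boundary term collapses to $F(0)$ and the $|q|<|p|$ branch yields the stated formula. Your added convergence bookkeeping (shifted points remaining in $\mathbb{D}_{R}$, invertibility of $\mathcal{R}(p^{P},q^{Q})$ along the orbit) only makes explicit what the paper handles separately in its subsequent remark.
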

\bigskip
For two positive real numbers $p,q$ such that $ 0<q<p\leq 1$, we can rewrite 
\begin{align*}
	0<\dfrac{q}{p}<1\leq \dfrac{1}{p}\\
	0<\dfrac{q}{p}<1\leq \phi_{1}
\end{align*}
or, similarly, 
\begin{align*}
	0<\dfrac{p}{q}<1\leq \dfrac{1}{q}\\
	0<\dfrac{p}{q}<1\leq \phi_{2}.
\end{align*}

\begin{thm}\label{def 3.4}
	\begin{align}
		\int_{0}^{a} f(z)d_{\mathcal{R}(p,q)}z=\frac{q^{Q}-p^{P}}{q^{n}-p^{n}}\dfrac{(q-p)a}{\mathcal{R}(q^{Q},p^{P})}\sum_{r=0}^{n}p^{r}/q^{r+1} f(p^{r}/q^{(r+1)} a)
	\end{align}
	if $\bigg| \dfrac{p}{q}\bigg|<1\leq \phi_{1}$.
	\begin{align}\label{def 3.27}
		\int_{0}^{a} f(z)d_{\mathcal{R}(p,q)}z=\frac{p^{P}-q^{Q}}{p^{n}-q^{n}}\dfrac{(p-q)a}{\mathcal{R}(p^{P},q^{Q})}\sum_{r=0}^{n}q^{r}/ p^{r+1} f(q^{r}/p^{(r+1)} a)
	\end{align}
	if $\bigg| \dfrac{q}{p}\bigg|<1\leq \phi_{2}$.
\end{thm}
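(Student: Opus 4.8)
The plan is to deduce both formulas directly from the indefinite $\mathcal{R}(p,q)$-integral established in the preceding Theorem, together with the telescoping identity
$$
F(p^{n+1}q^{-(n+1)}z)-F(z)=\frac{p^{P}-q^{Q}}{p^{n}-q^{n}}\mathcal{R^{-\prime}}(p^{P},q^{Q})\,(p-q)z\sum_{r=0}^{n} f(p^{r}q^{-(r+1)} z)
$$
obtained there by iterating the first-order relation $\tfrac{p-q}{p^{P}-q^{Q}}\mathcal{R}(p^{P},q^{Q})F(pz)-F(qz)=(p-q)zf(z)$. First I would recall that, by the definition of the definite $\mathcal{R}(p,q)$-integral, $\int_{0}^{a} f(z)\,d_{\mathcal{R}(p,q)}z=I_{\mathcal{R}(p,q)}f(a)-I_{\mathcal{R}(p,q)}f(0)=F(a)-F(0)$, where $F=I_{\mathcal{R}(p,q)}f$ is the anti-derivative. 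Thus the whole problem reduces to isolating the boundary value $F(0)$ from the telescoping relation evaluated at $z=a$.

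Next I would observe that the scaled argument on the left is $F(p^{n+1}q^{-(n+1)}z)=F\big((p/q)^{n+1}z\big)$, so the two hypotheses of the theorem are precisely what make one geometric ratio contractive. Under $|p/q|<1\le\phi_{1}$ one has $(p/q)^{n+1}\to 0$, hence by continuity of $F$ at the origin $F\big((p/q)^{n+1}a\big)\to F(0)$; passing to the limit $n\to\infty$ in the displayed identity then collapses the left side to $F(a)-F(0)=\int_{0}^{a}f\,d_{\mathcal{R}(p,q)}z$ and, after tracking the factor $(p-q)$ and the sum over $p^{r}/q^{r+1}$, produces the first formula. Interchanging the roles of $p$ and $q$ gives the partner telescoping identity, in which the argument is $(q/p)^{n+1}$ and the sum runs over $q^{r}/p^{r+1}$; applying the same limiting argument under the hypothesis $|q/p|<1\le\phi_{2}$ (so that $(q/p)^{n+1}\to 0$) yields exactly \eqref{def 3.27}. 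The sign change from $(p-q)$ to $(q-p)$ and the replacement $\mathcal{R}(p^{P},q^{Q})\mapsto\mathcal{R}(q^{Q},p^{P})$ are accounted for by this interchange.

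The step I expect to be the main obstacle is the rigorous justification of the passage to the limit. One must verify that the series $\sum_{r\ge 0}(q^{r}/p^{r+1})\,f(q^{r}/p^{r+1}\,a)$ (respectively its $p\leftrightarrow q$ counterpart) converges on the bidisk $\mathbb{D}_{R}$, and that the prefactor $\tfrac{p^{P}-q^{Q}}{p^{n}-q^{n}}\mathcal{R^{-\prime}}(p^{P},q^{Q})$ is controlled in the limit, so that the telescoping sum genuinely represents $F(a)-F(0)$ and not merely a formal expression. This relies on $\mathcal{R}(P,Q)$ being invertible, as assumed in the construction of $I_{\mathcal{R}(p,q)}$, together with continuity of $F$ at $0$ to force the boundary term to $F(0)$. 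The two separate hypotheses $|p/q|<1\le\phi_{1}$ and $|q/p|<1\le\phi_{2}$ are exactly the conditions that keep the iterated points inside the radius of convergence and drive the contractive ratio to zero, so each case is handled by the corresponding version of the identity.
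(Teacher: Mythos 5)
Your proposal matches the paper's own argument: the paper derives exactly the same first-order relation $\frac{p-q}{p^{P}-q^{Q}}\mathcal{R}(p^{P},q^{Q})F(pz)-F(qz)=(p-q)zf(z)$, iterates it into the telescoping identity, passes to the limit $n\to\infty$ under $|p|<|q|$ (resp.\ $|q|<|p|$) to obtain $F(a)-F(0)$, and then secures convergence via the growth bound $|f(x)z^{\gamma}|<M$ and continuity of $F$ at $z=0$ — the same ingredients you identify, merely presented in the paper partly before and partly after the theorem statement. No gap; your proof is essentially the paper's, assembled in a cleaner order.
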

\begin{rem}\label{rem3.5}
	Suppose $|f(x)z^{\gamma}|<M$ on $(0,A]$. For any $0<z<A, \quad j\geq 0$, we have 
	$$\bigg|f\left(\dfrac{q^{j}}{p^{j+1}}z\right)\bigg|<M \left(\dfrac{q^{j}}{p^{j+1}}z\right)^{-\gamma}.$$
	For $0<z\leq A$, we can observe that
	$$\bigg|\dfrac{q^{j}}{p^{j+1}}f\left(\dfrac{q^{j}}{p^{j+1}}z\right)\bigg|<M \dfrac{q^{j}}{p^{j+1}} \left(\dfrac{q^{j}}{p^{j+1}}z\right)^{-\gamma}=Mp^{\gamma -1}z^{-\gamma}\left[\left(\dfrac{q}{p}\right)^{1-\gamma} \right]^{j}.$$
\end{rem}
Since, $1-\gamma>0$ and $0<\dfrac{q}{p}<1\leq \phi_{1},$ it is easy to see that the integral converges. Furthermore,
\begin{align}
	\bigg|\frac{p^{P}-q^{Q}}{p^{n}-q^{n}}\dfrac{(p-q)z}{\mathcal{R}(p^{P},q^{Q})}\sum_{r=0}^{\infty}\dfrac{q^{j}}{p^{j+1}}f\left(\dfrac{q^{j}}{p^{j+1}}z\right)\bigg|&<M\dfrac{(p-q)z^{1-\gamma}}{p^{1-\gamma}-q^{1-\gamma}}\\
	\nonumber
	&\times\frac{p^{P}-q^{Q}}{p^{n}-q^{n}}\mathcal{R^{-\prime}}(p^{P},q^{Q}),
\end{align}
for $\phi_{1}\geq 1$ and $0<z\leq A.$
\\
Next, let $F(z)$ be a continuous function at $z=0$, then from equation $\eqref{3.2}$  we have 

\begin{align*}
	F(z)=\frac{p^{P}-q^{Q}}{p^{n}-q^{n}}\dfrac{(p-q)z}{\mathcal{R}(p^{P},q^{Q})}\sum_{r=0}^{n}q^{r}/ p^{r+1} f(q^{r}/p^{(r+1)} z)-F(0).
\end{align*}
This leads to 
\begin{align*}
	\int_{0}^{a} f(z)d_{\mathcal{R}(p,q)}z=F(a)- F(0),
\end{align*}
and similarly, 
\begin{align*}
	\int_{0}^{b} f(z)d_{\mathcal{R}(p,q)}z=F(b)- F(0)
\end{align*}
from Theorem 
\ref{def 3.4},
 where $a$ and $b$ are finite.
Finally, we have

\begin{align*}
	\int_{a}^{b} f(z)d_{\mathcal{R}(p,q)}z=\int_{0}^{b} f(z)d_{\mathcal{R}(p,q)}z- \int_{0}^{a} f(z)d_{\mathcal{R}(p,q)}z
\end{align*}
where $a<b$. 
From Theorem  
\ref{def 3.4},
 the equation \eqref{def 3.27} yields a bad definition of improper integral by simply as $a\longrightarrow \infty$. 
\begin{align*}
	\int_{\frac{q^{j+1}}{p^{j+1}}}^{\frac{q^{j}}{p^{j}}} f(z)d_{\mathcal{R}(p,q)}z&=\int_{0}^{\frac{q^{j}}{p^{j}}} f(z)d_{\mathcal{R}(p,q)}z- \int_{0}^{\frac{q^{j+1}}{p^{j+1}}} f(z)d_{\mathcal{R}(p,q)}z\\
	&=\frac{p^{P}-q^{Q}}{p^{n}-q^{n}}\dfrac{(p-q)}{\mathcal{R}(p^{P},q^{Q})}\left[ \sum_{r=0}^{n}q^{r+j}/ p^{r+j+1} f(q^{r+j}/p^{(r+j+1)}\right] \\
	&-\frac{p^{P}-q^{Q}}{p^{n}-q^{n}}\dfrac{(p-q)}{\mathcal{R}(p^{P},q^{Q})}\left[\sum_{r=0}^{n}q^{r+j+1}/ p^{r+j+2} \right.\\&\left.
	\times f(q^{r+j+1}/p^{(r+j+2)})\right] \\
	&=\frac{p^{P}-q^{Q}}{p^{n}-q^{n}}\dfrac{(p-q)}{\mathcal{R}(p^{P},q^{Q})}q^{j}/ p^{j+1} f(q^{j}/p^{(j+1)}),
\end{align*}
 leading  to the following:

\begin{prop}
	Suppose $0<\frac{q}{p}<1\leq\phi_{1}$, then for $f(z)$ on $[0,\infty)$, the improper $\mathcal{R}(p,q)-$ integral is given as
	\begin{align*}
		\int_{0}^{\infty} f(z)d_{\mathcal{R}(p,q)}z&=\sum_{j=-\infty}^{\infty}\int_{\frac{q^{j+1}}{p^{j+1}}}^{\frac{q^{j}}{p^{j}}} f(z)d_{\mathcal{R}(p,q)}z
		\\
		&=\frac{p^{P}-q^{Q}}{p^{n}-q^{n}}\dfrac{(p-q)}{\mathcal{R}(p^{P},q^{Q})}\sum_{j=-\infty}^{\infty}q^{j}/ p^{j+1} f(q^{j}/p^{(j+1)}).
	\end{align*}
	Similarly, for $0<\frac{q}{q}<1\leq\phi_{2}$, we have: 
	\begin{align*}
		\int_{0}^{\infty} f(z)d_{\mathcal{R}(p,q)}z&=\sum_{j=-\infty}^{\infty}\int_{\frac{q^{j}}{p^{j}}}^{\frac{q^{j+1}}{p^{j+1}}} f(z)d_{\mathcal{R}(p,q)}z.
	\end{align*}
\end{prop}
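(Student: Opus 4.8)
The plan is to obtain the improper integral as a countable sum of the elementary definite integrals already evaluated via Theorem~\ref{def 3.4}, and to control the convergence through Remark~\ref{rem3.5}; the algebra is then a direct substitution, with the only real work being the convergence bookkeeping at the two ends of the bi-infinite sum.

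First I would fix the geometric partition of the half-line. Since $0<\frac{q}{p}<1$, the points $x_{j}=\left(\frac{q}{p}\right)^{j}$, $j\in\BZ$, form a strictly decreasing bi-infinite sequence with $x_{j}\to 0$ as $j\to+\infty$ and $x_{j}\to+\infty$ as $j\to-\infty$. Hence the half-open blocks $\left[\frac{q^{j+1}}{p^{j+1}},\frac{q^{j}}{p^{j}}\right)$ tile $(0,\infty)$, and additivity of the $\mathcal{R}(p,q)$-integral yields the first equality $\int_{0}^{\infty}=\sum_{j\in\BZ}\int_{x_{j+1}}^{x_{j}}$. Next I would insert the value of each elementary block: the computation carried out immediately before the statement (splitting $\int_{0}^{q^{j}/p^{j}}$ minus $\int_{0}^{q^{j+1}/p^{j+1}}$ and telescoping by means of equation~\eqref{def 3.27} of Theorem~\ref{def 3.4}) already gives
\[
\int_{\frac{q^{j+1}}{p^{j+1}}}^{\frac{q^{j}}{p^{j}}} f(z)\,d_{\mathcal{R}(p,q)}z=\frac{p^{P}-q^{Q}}{p^{n}-q^{n}}\,\frac{(p-q)}{\mathcal{R}(p^{P},q^{Q})}\,\frac{q^{j}}{p^{j+1}}\,f\!\left(\frac{q^{j}}{p^{j+1}}\right).
\]
Substituting this into the telescoped decomposition and pulling the $j$-independent prefactor out of the summation produces the second displayed equality.

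The delicate point is the convergence of the resulting bi-infinite series $\sum_{j\in\BZ}\frac{q^{j}}{p^{j+1}}f\!\left(\frac{q^{j}}{p^{j+1}}\right)$. Writing $z_{j}=\frac{q^{j}}{p^{j+1}}=\frac{1}{p}\left(\frac{q}{p}\right)^{j}$, the tail $j\to+\infty$ (where $z_{j}\to 0$) is handled exactly by Remark~\ref{rem3.5}: under the hypothesis $|f(z)z^{\gamma}|<M$ with $1-\gamma>0$, the $j$-th term is bounded by $M\,z_{j}^{1-\gamma}\asymp\big[(q/p)^{1-\gamma}\big]^{j}$, which is summable because $0<\frac{q}{p}<1\leq\phi_{1}$ forces the geometric ratio into $(0,1)$. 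The opposite tail $j\to-\infty$ (where $z_{j}\to+\infty$) is \emph{not} covered by Remark~\ref{rem3.5} as stated, so here one must impose a matching decay hypothesis at infinity, say $|f(z)z^{\delta}|<M'$ for some $\delta>1$, under which the same geometric estimate makes the negative-index tail summable. Assembling the two tails gives absolute convergence of the series, which simultaneously legitimizes the countable additivity invoked in the first step.

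Finally, the second case $0<\frac{p}{q}<1\leq\phi_{2}$ is entirely symmetric: one interchanges the roles of $p$ and $q$, so the partition points become $\left(\frac{p}{q}\right)^{j}$ and the blocks are written in the order $\left[\frac{q^{j}}{p^{j}},\frac{q^{j+1}}{p^{j+1}}\right)$; the identical argument, now using the form of Theorem~\ref{def 3.4} valid for $\big|\frac{p}{q}\big|<1$, gives the stated decomposition. I expect the main obstacle throughout to be purely this convergence bookkeeping at the two ends of the bi-infinite sum — once the growth and decay hypotheses on $f$ near $0$ and near $\infty$ are in place, the remainder of the proof reduces to substitution and the telescoping identity.
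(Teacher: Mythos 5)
Your proposal follows the paper's own argument essentially step for step: the same geometric partition of $(0,\infty)$ into blocks $\left[\frac{q^{j+1}}{p^{j+1}},\frac{q^{j}}{p^{j}}\right)$, the same telescoping evaluation of each block via Theorem~\ref{def 3.4}, Remark~\ref{rem3.5} for the tail near zero, and an imposed bound of the form $|f(z)z^{\gamma}|<M$ for large $z$ to control the tail at infinity, followed by the symmetric case with $p$ and $q$ interchanged. If anything, your bookkeeping is cleaner than the paper's: you correctly insist on a decay exponent strictly greater than $1$ for the $j\to-\infty$ tail, whereas the paper's corresponding passage states a garbled condition ($\gamma>0$, $\delta\leq 1$, with $\delta$ undefined) under which its own displayed geometric bound would not actually be summable.
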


\bigskip
By Remark
 \ref{rem3.5}, 
 we have established the convergence of the first part: 
\begin{align*}
	\int_{0}^{\infty} f(z)d_{\mathcal{R}(p,q)}z&=\frac{p^{P}-q^{Q}}{p^{n}-q^{n}}\dfrac{(p-q)}{\mathcal{R}(p^{P},q^{Q})}\sum_{j=-\infty}^{\infty}q^{j}/ p^{j+1} f(q^{j}/p^{(j+1)})\\
	&=\frac{p^{P}-q^{Q}}{p^{n}-q^{n}}\dfrac{(p-q)}{\mathcal{R}(p^{P},q^{Q})}\left[\sum_{j=-\infty}^{\infty}q^{j}/ p^{j+1} f(q^{j}/p^{(j+1)})\right]\\
	&+\frac{p^{P}-q^{Q}}{p^{n}-q^{n}}\dfrac{(p-q)}{\mathcal{R}(p^{P},q^{Q})}\left[\sum_{j=-\infty}^{\infty}q^{j}/ p^{j+1} f(q^{j}/p^{(j+1)})\right].
\end{align*}
Now, for the final part, we asume for every large $z$ that we have $|f(x)z^{\gamma}|<M$ with $M>0$, $\gamma >0$ and $\delta\leq 1.$ 
For sufficiently large $j,$ we obtain
\begin{align*}
	\bigg|\dfrac{q^{-j}}{p^{-j+1}}f\left(\dfrac{q^{-j}}{p^{-j+1}}z\right)\bigg|&=p^{\gamma -1}\left(\dfrac{q}{p}\right)^{(\gamma-1)j} \bigg|\dfrac{q^{-j}}{p^{-j+1}} f\left (\dfrac{q^{-j}}{p^{-j+1}}z\right)\bigg|\\
	&<Mp^{\gamma -1}\left[\left(\dfrac{q}{p}\right)^{1-\gamma} \right]^{-j}.
\end{align*}
The series also converges. 

\begin{defn}
	Let $f^{\prime}(z)$ denote the ordinary derivative of $f(z).$ For a continuous $f^{\prime}(z)$, there exists a neighborhood $z=0$ such that $$ \int_{a}^{b}\partial_{\mathcal{R}( p,q)}f(z)d_{\mathcal{R}( p,q)}z=f(b)-f(a).$$
	For two functions whose the ordinary derivatives  exist in a neighbourhood of $z=0$,  the $\mathcal{R}( p,q)-$integration by parts is given as
	
	\begin{align*}
		\int_{a}^{b}f(px)[\partial_{\mathcal{R}( p,q)}g(z)]d_{\mathcal{R}( p,q)}z &=f(b)g(b)- f(a)g(a)\\
		&+\int_{a}^{b}g(qx)[\partial_{\mathcal{R}( p,q)}f(z)]d_{\mathcal{R}( p,q)}z,
	\end{align*}
	where $a,b$ are integers such that $0\leq a \leq b \leq \infty.$
\end{defn}

\begin{rem}
	By setting $\mathcal{R}(p,q)=1$,  the
	$(p,q)-$Jagannathan-Srinivasa derivative $f$ is defined as
	\begin{align}
		d_{p,q}f(z)=\dfrac{f(pz)-f(qz)}{(p-q)z}, \quad \quad z\neq 0,
	\end{align}	
	and $(d_{p,q}f)(0)=f^{\prime}(0)$, provided $f$ is differentiable at $0$. Thus the \\$(p,q)-$number is defined as
	
	\begin{eqnarray}
		[n]_{p,q}:&=& p^{n-1}+p^{n-2}q+p^{n-3}q^{2}+\cdots
		+pq^{n-2}+q^{n-1}\\ \nonumber
		&=&\dfrac{p^{n}-q^{n}}{p-q}.
	\end{eqnarray}
	It is clear that $d_{p,q}z^{n}=[n]_{p,q}z^{n-1}$. Consequently, for $p=1,$ the $(p,q)-$ derivative reduces to the $q-$derivative given by: 
	\begin{align}
		d_{q}f(z)=\dfrac{f(z)-f(qz)}{(1-q)z}, \quad \quad z\neq 0.
	\end{align}	
\end{rem}

\begin{enumerate}
	\item [(i)] $d_{p,q}(af(z)+bg(z))=a[d_{p,q}f(x)]+b[d_{p,q}g(z)], \quad \mbox{where}\quad a,b \quad \mbox{constants},$\\
	\item [(ii)]$ d_{p,q}(f(z)g(z))=f(px)d_{p,q}g(z)+g(qz)d_{p,q}f(z),$\\
	\item [(iii)]$d_{p,q}(f(z)g(z))=g(pz)d_{p,q}f(z)+f(qz)d_{p,q}g(z),$\\
	\item [(iv)]$d_{p,q}\left( \dfrac{f(z)}{g(z)}\right)= \dfrac{g(pz)d_{p,q}f(z)-f(pz)d_{p,q}g(z)}{g(pz)g(qz)},$\\
	\item [(iv)]$d_{p,q}\left( \dfrac{f(x)}{g(x)}\right)= \dfrac{g(qz)d_{p,q}f(z)-f(qz)d_{p,q}g(z)}{g(pz)g(qz)}.$
	
\end{enumerate}


\section{$\mathcal{R}(p,q)-$gamma and beta functions}\label{Gamma}
Sadjang \cite{ref58} offered two acceptable polynomial bases for the $(p, q)-$derivative, and different features of these bases were described. As an application, he provided two $(p, q)-$Taylor formulae for polynomials, and the fundamental theorem of $(p,q)-$calculus proves the $(p, q)-$integration by part formula. In addition, he introduced a novel generalization of the gamma and beta functions, known as the $(p, q)-$gamma and $(p, q)-$beta functions \cite{ref59}. Their primary characteristics were stated and demonstrated. It should be mentioned that the authors of \cite{ref80} proposed an additional generalization of the gamma function. In this section we extend the results to the $\mathcal{R}(p,q)-$deformed calculus and make some generalizations.

\subsection{$\mathcal{R}(p,q)-$gamma function}
\begin{defn}\label{def4.1}
	Let $z$ be a complex number. We define the $\mathcal{R}(p,q)-$gamma function as
	
	\begin{align}
		\Gamma_{\mathcal{R}(p,q)}(z)=\dfrac{(\xi_{1}\ominus \xi_{2})^{\infty}_{\mathcal{R}(p,q)}}{(\xi_{1}^{z}\ominus \xi_{2}^{z})^{\infty}_{\mathcal{R}(p,q)}}(\xi_{1}-\xi_{2})^{1-z}
	\end{align} 
	for $0<\xi_{2}<\xi_{1}$, with $\xi_{1}, \xi_{2}\in \xi_{i},\quad i=1,2$.
	Also, if $\xi_{1}=p \quad \mbox{and} \quad \xi_{2}=q$ with $\mathcal{R}(p,q)=1$ and $0<q<p$, one obtains the $\Gamma_{p,q}(z)$ function.This further reduces to $\Gamma_{q}(z)$ function, if we set $p=1$.
\end{defn}

The $\mathcal{R}(p,q)-$power basis and the $\mathcal{R}(p,q)-$ factorial are linked as
$$[n]_{\mathcal{R}(p,q)}!=\dfrac{(\xi_{1}\ominus \xi_{2})^{n}_{\mathcal{R}(p,q)}}{(\xi_{1}-\xi_{2})^{n}}.$$

\subsubsection{Properties of $\mathcal{R}(p,q)-$gamma function}
From the Definition \ref{def4.1} we have the following results: 
\begin{enumerate}
	\item [(i)] $\Gamma_{\mathcal{R}(p,q)}(z+1)=[z]_{\mathcal{R}(p,q)}\Gamma_{\mathcal{R}(p,q)}(z)$
	\item [(ii)]$\Gamma_{\mathcal{R}(p,q)}(n+1)=[n]_{\mathcal{R}(p,q)}!$ for a non-negative integer $n$ 	
	\item [(iii)] $\Gamma_{\mathcal{R}(p,q)}(2z)\Gamma_{\mathcal{R}(p^{2},q^{2})}\left(\dfrac{1}{2} \right)=(\xi_{1}+\xi_{2})^{2z-1}\Gamma_{\mathcal{R}(p^{2},q^{2})}(z)\Gamma_{\mathcal{R}(p^{2},q^{2})}(z+\frac{1}{2})$
	\item [(iv)] $\dfrac{1}{\Gamma_{\mathcal{R}(p,q)}(z)}=\prod_{n=0}^{\infty}\dfrac{(\xi_{1}\ominus \xi_{2}^{n+z})^{\infty}_{\mathcal{R}(p,q)}}{(\xi_{1}\ominus \xi_{2}^{n+1})^{\infty}_{\mathcal{R}(p,q)}}(\xi_{1}-\xi_{2})^{z-1}$	
\end{enumerate}

\subsection{$\mathcal{R}(p,q)-$beta function}
From the definition of the $\mathcal{R}(p,q)-$gamma function we  have the following:
\begin{align}
	\beta_{\mathcal{R}(p,q)}(x,y)&=\dfrac{\Gamma_{\mathcal{R}(p,q)}(x) \Gamma_{\mathcal{R}(p,q)}(y) }{\Gamma_{\mathcal{R}(p,q)}(x+y)},\\\nonumber
	&=\dfrac{\dfrac{(\xi_{1}\ominus \xi_{2})^{\infty}_{\mathcal{R}(p,q)}}{(\xi_{1}^{x}\ominus \xi_{2}^{x})^{\infty}_{\mathcal{R}(p,q)}}(\xi_{1}-\xi_{2})^{1-x}\dfrac{(\xi_{1}\ominus \xi_{2})^{\infty}_{\mathcal{R}(p,q)}}{(\xi_{1}^{y}\ominus \xi_{2}^{y})^{\infty}_{\mathcal{R}(p,q)}}(\xi_{1}-\xi_{2})^{1-y}}{\dfrac{(\xi_{1}\ominus \xi_{2})^{\infty}_{\mathcal{R}(p,q)}}{(\xi_{1}^{x+y}\ominus \xi_{2}^{x+y})^{\infty}_{\mathcal{R}(p,q)}}(\xi_{1}-\xi_{2})^{x-y}},
\end{align}
where $x,y \in \BZ_{p}$.

\subsubsection{Properties of the $\mathcal{R}(p,q)-$beta function}
\begin{enumerate}
	\item [(i)] $\beta_{\mathcal{R}(p,q)}(x,y+1)=\dfrac{[y]_{\mathcal{R}(p,q)}}{[x+y]_{\mathcal{R}(p,q)}}\beta_{\mathcal{R}(p,q)}(x,y)$
	\item [(ii)] $\beta_{\mathcal{R}(p,q)}(x+1,y)=\dfrac{[x]_{\mathcal{R}(p,q)}}{[x+y]_{\mathcal{R}(p,q)}}\beta_{\mathcal{R}(p,q)}(x,y)$
	\item [(iii)] $\beta_{\mathcal{R}(p,q)}(x+1,y)=\dfrac{[x]_{\mathcal{R}(p,q)}}{[y]_{\mathcal{R}(p,q)}}\beta_{\mathcal{R}(p,q)}(x,y+1)$
	\item [(iv)] $\beta_{\mathcal{R}(p,q)}(x+n,y)=\dfrac{(\xi_{1}^{x}\ominus \xi_{2}^{x})^{n}_{\mathcal{R}(p,q)}}{(\xi_{1}^{x+y}\oplus \xi_{2}^{x+y})^{n}_{\mathcal{R}(p,q)}}\beta_{\mathcal{R}(p,q)}(x,y)$
	\item [(v)] $\beta_{\mathcal{R}(p,q)}(x+1,y)+\beta_{\mathcal{R}(p,q)}(x,y+1)=\dfrac{[x]_{\mathcal{R}(p,q)}+ [y]_{\mathcal{R}(p,q)}}{[x+y]_{\mathcal{R}(p,q)}}\beta_{\mathcal{R}(p,q)}(x,y)$
	\item[(vi)] $\beta_{\mathcal{R}(p,q)}(x+1,y+1)=\dfrac{[x]_{\mathcal{R}(p,q)}+ [y]_{\mathcal{R}(p,q)}}{[x+y+1]_{\mathcal{R}(p,q)}[x+y]_{\mathcal{R}(p,q)}}\beta_{\mathcal{R}(p,q)}(x,y)$
	\item [(vii)] $\beta_{\mathcal{R}(p,q)}(x,y)+\beta_{\mathcal{R}(p,q)}(x+y,z)+\beta_{\mathcal{R}(p,q)}(x+y+z,w)\\ \nonumber
	=\dfrac{\Gamma_{\mathcal{R}(p,q)}(x) \Gamma_{\mathcal{R}(p,q)}(y)\Gamma_{\mathcal{R}(p,q)}(z)\Gamma_{\mathcal{R}(p,q)}(w) }{\Gamma_{\mathcal{R}(p,q)}(x+y+z+w)}$
	\item [(v)] $\beta_{\mathcal{R}(p,q)}(x,1-x)=\Gamma_{\mathcal{R}(p,q)}{(x)}\Gamma_{\mathcal{R}(p,q)}(1-x)=\dfrac{\pi}{\sin_{\mathcal{R}(p,q)}[\pi x]}$
	\item [(viii)] $\beta_{\mathcal{R}(p,q)}(x,y)\beta_{\mathcal{R}(p,q)}(x+y, 1-y)=\dfrac{\pi}{[x]_{\mathcal{R}(p,q)}\sin_{\mathcal{R}(p,q)}[\pi x]}$
	\item [(ix)] $\beta_{\mathcal{R}(p,q)}(\frac{1}{2}\frac{1}{2})=\pi.$
\end{enumerate}

\section{$\mathcal{R}(p,q)-$power basis and $\mathcal{R}( p,q)$-Taylor polynomial}
\begin{defn}
	Let $n$ be a non-negative integer. The $\mathcal{R}(p,q)-$analogues of {\it pseudo-addition and substraction} operations  are defined by
	\begin{enumerate}		
		\item [(i)] $(x\ominus a)^{n}_{\mathcal{R}(p,q)}=(x-a)(x\xi_{1}-a\xi_{2})(x\xi_{1}^{2}-a \xi_{2}^{2})\cdots(x\xi_{1}^{n-1}-a\xi_{2}^{n-1})$
		\item [(ii)] $(x\ominus a)^{-n}_{\mathcal{R}(p,q)}= \dfrac{1}{(x\xi_{1}^{-n}\ominus \xi_{2}^{-n}a)^{n}_{\mathcal{R}(p,q)}}$
		\item [(ii)] $(x\ominus a)^{-n}_{\mathcal{R}(p,q)}=(x\ominus a)^{m}_{\mathcal{R}(p,q)} (x\xi_{1}^{m}\ominus \xi_{2}^{m}a)^{n}_{\mathcal{R}(p,q)}$
		\item [(iii)] $(x\oplus a)^{n}_{\mathcal{R}(p,q)}=(x+a)(x\xi_{1}+\xi_{2}a)(x\xi_{1}^{2}+a\xi_{2}^{2})\cdots(x\xi_{1}^{n-1}+a\xi_{2}^{n-1})$\\\\
		if $n\geq 1$ respectively. We can further extend the results as follows:
		\item [(iv)] 	
		$(x\oplus y)^{\infty}_{\mathcal{R}(p,q)}=\prod_{i=0}^{\infty}(x\xi_{1}^{i} + y\xi_{2}^{i})$ 	
		\item [(v)] 	
		$(x\ominus y)^{\infty}_{\mathcal{R}(p,q)}=\prod_{i=0}^{\infty}(x\xi_{1}^{i} - y\xi_{2}^{i})$ 
		\item [(vi)]
		$(x\ominus y)^{n}_{\mathcal{R}(p,q)}=\dfrac{(x\ominus y)^{\infty}_{\mathcal{R}(p,q)}}{(x\xi_{1}^{n}\ominus y\xi_{2}^{n})^{\infty}_{\mathcal{R}(p,q)}}$
		\item [(vii)] 	
		$(x\xi_{1}^{n}\ominus y\xi_{2}^{n})^{k}_{\mathcal{R}(p,q)}=\dfrac{(x\ominus y)^{k}_{\mathcal{R}(p,q)}(x\xi_{1}^{k}\ominus y\xi_{2}^{k})^{n}_{\mathcal{R}(p,q)}}{(x\ominus y)^{n}_{\mathcal{R}(p,q)}}$
		\item [(viii)] $(x\ominus y)^{n+k}_{\mathcal{R}(p,q)}=(x\ominus y)^{n}_{\mathcal{R}(p,q)}(x\xi_{1}^{n}\ominus y\xi_{2}^{n})^{k}_{\mathcal{R}(p,q)} $
		\item [(ix)] 	
		$(x\xi_{1}^{k}\ominus y\xi_{2}^{k})^{n-k}_{\mathcal{R}(p,q)}=\dfrac{(x\ominus y)^{n}_{\mathcal{R}(p,q)}}{(x\ominus y)^{k}_{\mathcal{R}(p,q)}}$
		\item [(x)] 	
		$(x\xi_{1}^{2k}\ominus y\xi_{2}^{2k})^{n-k}_{\mathcal{R}(p,q)}=\dfrac{(x\ominus y)^{n}_{\mathcal{R}(p,q)}(x\xi_{1}^{n}\ominus y\xi_{2}^{n})^{k}_{\mathcal{R}(p,q)}}{(x\ominus y)^{2k}_{\mathcal{R}(p,q)}}$
		\item [(xi)] 	
		$(x\ominus y)^{2n}_{\mathcal{R}(p,q)} = (x\ominus y)^{n}_{\mathcal{R}(p^{2},q^{2})} (x \xi_{1}\ominus y\xi_{2})^{n}_{\mathcal{R}(p^{2},q^{2})}$ 
		\item [(xii)] 	
		$(x\ominus y)^{3n}_{\mathcal{R}(p,q)} = (x\ominus y)^{n}_{\mathcal{R}(p^{2},q^{2})}(x\xi_{1}\ominus y\xi_{2})^{n}_{\mathcal{R}(p^{3},q^{3})}\\
		\times (x \xi_{1}^{2}\ominus y\xi_{2}^{2})^{n}_{\mathcal{R}(p^{3},q^{3})} $ 
		\item [(xiii)] 	
		$(x\ominus y)^{3n}_{\mathcal{R}(p,q)} = (x\ominus y)^{n}_{\mathcal{R}(p^{2},q^{2})}(x\xi_{1}\ominus y\xi_{2})^{n}_{\mathcal{R}(p^{3},q^{3})}\\
		\times (x \xi_{1}^{2}\ominus y\xi_{2}^{2})^{n}_{\mathcal{R}(p^{3},q^{3})} $ 	
		\item [(xiv)] 	
		$(x\oplus y)^{kn}_{\mathcal{R}(p,q)}=\prod_{i=0}^{k-1}(x\xi_{1}^{i} + y\xi_{2}^{i})^{n}_{\mathcal{R}(p^{k},q^{k})}$ 	
	\end{enumerate}
\end{defn}

\subsection{$\mathcal{R}( p,q)$- derivatives associated with the power basis}
\noindent The $\mathcal{R}( p,q)$- derivatives associated with the power basis satisfy the following:
\begin{align}\label{def 3.36}
	\partial_{\mathcal{R}( p,q)}(x\ominus a)^{n}_{\mathcal{R}( p,q)}&= [n]_{\mathcal{R}( p,q)}(\xi_{1}x \ominus a)^{n-1}_{\mathcal{R}( p,q)} \quad n\geq 1, \\
	\nonumber
	\partial_{\mathcal{R}( p,q)}^{k}(x\ominus a)^{n}_{\mathcal{R}( p,q)}&= \xi_{1}^{\left(\begin{array}{ccc}k\\2\end{array}\right)}\dfrac{[n]_{\mathcal{R}( p,q)}!}{[n-k]_{\mathcal{R}( p,q)}!}(\xi_{1}^{k}x \ominus a)^{n-k}_{\mathcal{R}( p,q)},	\\\nonumber
	\partial_{\mathcal{R}( p,q)}^{k}(x \ominus a)^{n}_{\mathcal{R}( p,q)}&= (-1)^{k}\xi_{2}^{\left(\begin{array}{ccc}k\\2\end{array}\right)}\dfrac{[n]_{\mathcal{R}( p,q)}!}{[n-k]_{\mathcal{R}( p,q)}!}(a \ominus \xi_{2}^{k}x)^{n-k}_{\mathcal{R}( p,q)},
	\\\nonumber
	\partial_{\mathcal{R}( p,q)}\dfrac{1}{(x \ominus a)^{n}_{\mathcal{R}( p,q)}}&=\dfrac{-\xi_{2}[n]_{\mathcal{R}( p,q)}}{(\xi_{2}x\ominus a)_{\mathcal{R}( p,q)}^{n+1}},
	\nonumber
	\\
	\partial_{\mathcal{R}( p,q)}\dfrac{1}{(a\ominus x)^{n}_{\mathcal{R}(p,q)}}&=\dfrac{\xi_{1}[n]_{\mathcal{R}( p,q)}}{(a \ominus\xi_{1}x)_{\mathcal{R}( p,q)}^{n+1}},
	\nonumber
	\\
	\partial_{\mathcal{R}( p,q)}(a\ominus x)^{n}_{\mathcal{R}( p,q)}&=-[n]_{\mathcal{R}( p,q)}(a \ominus \xi_{2}x)^{n-1}_{\mathcal{R}( p,q)}.
\end{align}

\begin{defn}
	For any given complex number $\lambda$ and a nonnegative integer $n$, the following relations hold:
	\begin{align}
		\partial_{\mathcal{R}( p,q)}e_{\mathcal{R}( p,q)}(\lambda x)&=\lambda e_{\mathcal{R}( p,q)}(\lambda \xi_{1}x),\\\nonumber
		\partial_{\mathcal{R}( p,q)}\mathbb{E}_{\mathcal{R}( p,q)}(\lambda x)&=\lambda \mathbb{E}_{\mathcal{R}( p,q)}(\lambda \xi_{1}x),\\
		\nonumber
		\partial_{\mathcal{R}( p,q)}^{n}e_{\mathcal{R}( p,q)}(\lambda x)&=\lambda^{n}\xi_{1}^{\left(\begin{array}{ccc}n\\2\end{array}\right)} e_{\mathcal{R}( p,q)}(\lambda \xi_{1}^{n}x),\\\nonumber
		\partial_{\mathcal{R}( p,q)}^{n}\mathbb{E}_{\mathcal{R}( p,q)}(\lambda x)&=\lambda^{n}\xi_{2}^{\left(\begin{array}{ccc}n\\2\end{array}\right)} \mathbb{E}_{\mathcal{R}( p,q)}(\lambda \xi_{2}^{n}x). \label{def4.3} 
	\end{align}
\end{defn}

\begin{thm}
	For a given complex number $a$, the following expansions hold:
	\begin{align}
		e_{\mathcal{R}( p,q)}(\lambda x)&=\sum_{n=0}^{\infty}\dfrac{((\xi_{1}- \xi_{2})\lambda)^{n}}{(\xi_{1}\ominus \xi_{2})^{n}_{\mathcal{R}( p,q)}}(x\ominus a)^{n}_{\mathcal{R}( p,q)},
		\\\nonumber
		\mathbb{E}_{\mathcal{R}( p,q)}(\lambda x)&=\sum_{n=0}^{\infty}\left(\dfrac{\xi_{2}}{\xi_{1}}\right)^{-\left(\begin{array}{ccc}n\\2\end{array}\right)}\dfrac{\lambda^{n}\mathbb{E}_{\mathcal{R}( p,q)}\left(\lambda a \left(\dfrac{\xi_{2}}{\xi_{1}}\right)^{n}\right)}{[n]_{\mathcal{R}( p,q)}!}(x\ominus a)^{n}_{\mathcal{R}( p,q)},
		\\
		\mathbb{E}_{p,q}(\lambda x)&=\mathbb{E}_{\mathcal{R}( p,q)}(\lambda a)\sum_{n=0}^{\infty}\dfrac{((\xi_{1}- \xi_{2})\lambda)^{n}}{(\xi_{1}\ominus \xi_{2})^{n}_{\mathcal{R}( p,q)}}(a\ominus x)^{n}_{\mathcal{R}( p,q)},\\\nonumber 
	\end{align}
	and
	\begin{align}
		e_{\mathcal{R}( p,q)}(\lambda x)&=\sum_{n=0}^{\infty}\left(-\dfrac{\xi_{1}}{\xi_{2}}\right)^{-\left(\begin{array}{ccc}n\\2\end{array}\right)}\dfrac{\lambda^{n} e_{\mathcal{R}( p,q)}\left(\lambda a \left(\dfrac{\xi_{1}}{\xi_{2}}\right)^{n}\right)}{[n]_{\mathcal{R}( p,q)}!}(a\ominus x)^{n}_{\mathcal{R}( p,q)}.
	\end{align}	
\end{thm}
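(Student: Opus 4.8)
The plan is to deduce all four expansions from a single $\mathcal{R}(p,q)$-Taylor theorem applied in the two power bases $\{(x\ominus a)^{n}_{\mathcal{R}(p,q)}\}_{n\ge 0}$ and $\{(a\ominus x)^{n}_{\mathcal{R}(p,q)}\}_{n\ge 0}$. First I would record the normalisation $[n]!_{\mathcal{R}(p,q)}=(\xi_{1}\ominus\xi_{2})^{n}_{\mathcal{R}(p,q)}/(\xi_{1}-\xi_{2})^{n}$ stated just above Definition~\ref{def4.1}, so that the coefficient $((\xi_{1}-\xi_{2})\lambda)^{n}/(\xi_{1}\ominus\xi_{2})^{n}_{\mathcal{R}(p,q)}$ occurring in the first and third identities is nothing but $\lambda^{n}/[n]!_{\mathcal{R}(p,q)}$. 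With this reduction the four displayed formulas are precisely the specialisations of two Taylor expansions to $f=e_{\mathcal{R}(p,q)}(\lambda\,\cdot\,)$ and $f=\mathbb{E}_{\mathcal{R}(p,q)}(\lambda\,\cdot\,)$, so it suffices to prove the generic expansion and then substitute.

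Next I would establish that generic expansion. Writing a holomorphic $f$ on $\mathbb{D}_{R}$ as $f(x)=\sum_{n\ge 0}c_{n}(x\ominus a)^{n}_{\mathcal{R}(p,q)}$, I extract $c_{n}$ by iterating $\partial_{\mathcal{R}(p,q)}$ and using the power-basis rule \eqref{def 3.36}, namely $\partial_{\mathcal{R}(p,q)}^{k}(x\ominus a)^{n}_{\mathcal{R}(p,q)}=\xi_{1}^{\binom{k}{2}}\frac{[n]!_{\mathcal{R}(p,q)}}{[n-k]!_{\mathcal{R}(p,q)}}(\xi_{1}^{k}x\ominus a)^{n-k}_{\mathcal{R}(p,q)}$. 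The key observation is that the first factor of $(\xi_{1}^{k}x\ominus a)^{m}_{\mathcal{R}(p,q)}$ is $\xi_{1}^{k}x-a$, so at the shifted node $x=a\xi_{1}^{-k}$ this product vanishes for $m\ge 1$ and equals $1$ for $m=0$. Hence all terms with $n\neq k$ drop out and $(\partial_{\mathcal{R}(p,q)}^{k}f)(a\xi_{1}^{-k})=c_{k}\,\xi_{1}^{\binom{k}{2}}[k]!_{\mathcal{R}(p,q)}$, giving $c_{k}=(\partial_{\mathcal{R}(p,q)}^{k}f)(a\xi_{1}^{-k})/(\xi_{1}^{\binom{k}{2}}[k]!_{\mathcal{R}(p,q)})$. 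The dual basis $(a\ominus x)^{n}_{\mathcal{R}(p,q)}$ is treated identically, using the companion rule in \eqref{def 3.36} with the factor $(-1)^{k}\xi_{2}^{\binom{k}{2}}$ and evaluation at $x=a\xi_{2}^{-k}$.

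Then I would substitute the exponential derivative formulas \eqref{def4.3}. For $f=e_{\mathcal{R}(p,q)}(\lambda\,\cdot\,)$ one has $\partial_{\mathcal{R}(p,q)}^{k}e_{\mathcal{R}(p,q)}(\lambda x)=\lambda^{k}\xi_{1}^{\binom{k}{2}}e_{\mathcal{R}(p,q)}(\lambda\xi_{1}^{k}x)$; evaluated at $a\xi_{1}^{-k}$ the argument collapses to $\lambda a$ and the $\xi_{1}^{\binom{k}{2}}$ factors cancel against the basis, leaving $c_{k}=\lambda^{k}e_{\mathcal{R}(p,q)}(\lambda a)/[k]!_{\mathcal{R}(p,q)}$, which is the first identity (the third following from the dual basis). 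For $f=\mathbb{E}_{\mathcal{R}(p,q)}(\lambda\,\cdot\,)$ the derivative instead produces $\lambda^{k}\xi_{2}^{\binom{k}{2}}\mathbb{E}_{\mathcal{R}(p,q)}(\lambda\xi_{2}^{k}x)$, so evaluation at $a\xi_{1}^{-k}$ turns the argument into $\lambda a(\xi_{2}/\xi_{1})^{k}$ and leaves the mismatch $\xi_{2}^{\binom{k}{2}}/\xi_{1}^{\binom{k}{2}}=(\xi_{2}/\xi_{1})^{\binom{k}{2}}$; these are exactly the extra factor $(\xi_{2}/\xi_{1})^{\pm\binom{k}{2}}$ and shifted argument in the second identity, with the fourth arising the same way from the $(a\ominus x)$-basis.

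I expect the main obstacle to be analytic rather than algebraic: one must justify that the passage from $\{x^{n}\}$ to $\{(x\ominus a)^{n}_{\mathcal{R}(p,q)}\}$ is a legitimate operation on $\mathcal{O}(\mathbb{D}_{R})$, i.e.\ that the resulting series converges on the bidisk and that termwise $\mathcal{R}(p,q)$-differentiation is permitted, so that the formal coefficient extraction above is rigorous. A secondary, purely bookkeeping, difficulty is tracking the powers of $\xi_{1},\xi_{2}$ and the signs generated at the shifted nodes $a\xi_{1}^{-k}$ and $a\xi_{2}^{-k}$, which are the source of the factors $(\xi_{2}/\xi_{1})^{\pm\binom{k}{2}}$ and $(-\xi_{1}/\xi_{2})^{\pm\binom{k}{2}}$. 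As an independent check I would differentiate each proposed series termwise via \eqref{def 3.36} and confirm that it reproduces the corresponding exponential derivative formula \eqref{def4.3}, together with agreement at $x=a$; uniqueness of the power-basis expansion then forces the claimed equalities.
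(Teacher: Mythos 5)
Your proposal is correct and follows essentially the same route as the paper: expand $f$ in the bases $(x\ominus a)^{n}_{\mathcal{R}(p,q)}$ and $(a\ominus x)^{n}_{\mathcal{R}(p,q)}$ and substitute the derivative identities \eqref{def4.3}, where the coefficient extraction you perform at the shifted nodes $a\xi_{1}^{-k}$ and $a\xi_{2}^{-k}$ is exactly the paper's proof of its $\mathcal{R}(p,q)$-Taylor theorem, which the present proof simply invokes by ``taking $N$ to be $\infty$''. The only discrepancy is harmless: your derivation (like the paper's own computation) yields the prefactor $e_{\mathcal{R}(p,q)}(\lambda a)$ in the first expansion, which is missing from the theorem statement as printed, so you have in effect reproduced the proof's (correct) version rather than the mis-typed statement.
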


\begin{proof}	
	Suppose we take $N$ to be $\infty$, then we can have the following:
	\begin{align*}
		f(x)=\sum_{k=0}^{\infty}\xi_{1}^{-\left(\begin{array}{ccc}k\\2\end{array}\right)}\dfrac{(\partial^{k}_{\mathcal{R}( p,q)}f)(a\xi_{1}^{-k})}{[k]_{\mathcal{R}( p,q)}!}(x\ominus a)^{k}_{\mathcal{R}( p,q)}.
	\end{align*}	
	If we take $f(x)=	e_{\mathcal{R}( p,q)}(\lambda x),$ one can use the relation $\eqref{def4.3}$ to obtain the following:
	\begin{align*}
		e_{\mathcal{R}( p,q)}(\lambda x)&=\sum_{n=0}^{\infty}\xi_{1}^{-\left(\begin{array}{ccc}n\\2\end{array}\right)}\dfrac{\lambda^{n}	e_{\mathcal{R}( p,q)}(\lambda a) \xi_{1}^{\left(\begin{array}{ccc}n\\2\end{array}\right)}}{[n]_{\mathcal{R}( p,q)}!}(x\ominus a)^{n}_{\mathcal{R}( p,q)}
		\\
		&=e_{\mathcal{R}( p,q)}(\lambda a)\sum_{n=0}^{\infty}\dfrac{\lambda^{n}}{[n]_{\mathcal{R}( p,q)}!}(x\ominus a)^{n}_{p,q}
		\\
		&=e_{\mathcal{R}( p,q)}(\lambda a)\sum_{n=0}^{\infty}\dfrac{((\xi_{1}- \xi_{2})\lambda)^{n}}{(\xi_{1}\ominus \xi_{2})^{n}_{\mathcal{R}( p,q)}}(x\ominus a)^{n}_{\mathcal{R}( p,q)}.
	\end{align*}
	Similarly, if we take $f(x)=\mathbb{E}_{\mathcal{R}( p,q)}(\lambda x),$ one can use the relation \eqref{def4.3} to obtain the following:
	\begin{align*}
		\mathbb{E}_{\mathcal{R}( p,q)}(\lambda x)&=\sum_{n=0}^{\infty}\xi_{1}^{-\left(\begin{array}{ccc}n\\2\end{array}\right)}\dfrac{\lambda^{n}	\mathbb{E}_{p,q}\left(\lambda a \left(\dfrac{\xi_{1}}{\xi_{2}}\right)^{n}\right) \xi_{2}^{\left(\begin{array}{ccc}n\\2\end{array}\right)}}{[n]_{\mathcal{R}( p,q)}!}(x\ominus a)^{n}_{\mathcal{R}( p,q)}
		\\
		&=\sum_{n=0}^{\infty}\left[\frac{\xi_{1}}{\xi_{2}}\right]^{\left(\begin{array}{ccc}n\\2\end{array}\right)}\dfrac{\lambda^{n}	\mathbb{E}_{\mathcal{R}( p,q)}\left(\lambda a \left(\dfrac{\xi_{1}}{\xi_{2}}\right)^{n}\right) }{[n]_{\mathcal{R}( p,q)}!}(x\ominus a)^{n}_{\mathcal{R}( p,q)}.
	\end{align*}
	
	Next, suppose we take $N$ to be $\infty$, then we  have the following:
	\begin{align*}
		f(x)=\sum_{n=0}^{\infty}(-1)^{n}\xi_{2}^{-\left(\begin{array}{ccc}n\\2\end{array}\right)}\dfrac{(\partial^{n}_{\mathcal{R}( p,q)}f)(a\xi_{2}^{-n})}{[n]_{\mathcal{R}( p,q)}!}(a\ominus x)^{n}_{\mathcal{R}( p,q)}.
	\end{align*}
	Assume $f(x)=e_{\mathcal{R}( p,q)}(\lambda x).$ One can use the relation $\eqref{def4.3}$ to obtain the following:
	\begin{align*}
		e_{\mathcal{R}( p,q)}(\lambda x)&=\sum_{n=0}^{\infty}(-1)^{n}\xi_{2}^{-\left(\begin{array}{ccc}n\\2\end{array}\right)}\dfrac{\lambda^{n}	e_{\mathcal{R}( p,q)}\left(\lambda a \left(\dfrac{\xi_{1}}{\xi_{2}}\right)^{n}\right) \xi_{1}^{\left(\begin{array}{ccc}n\\2\end{array}\right)}}{[n]_{\mathcal{R}( p,q)}!}\\
		\nonumber
		&\times(a\ominus x)^{n}_{\mathcal{R}( p,q)}
		\\
		&=\sum_{n=0}^{\infty} \left(-\dfrac{\xi_{1}}{\xi_{2}}\right)^{\left(\begin{array}{ccc}n\\2\end{array}\right)}\dfrac{\lambda^{n}e_{\mathcal{R}( p,q)} \left(\lambda a \left(\dfrac{\xi_{1}}{\xi_{2}}\right)^{n}\right) }{[n]_{\mathcal{R}( p,q)}!}(a \ominus x)^{n}_{\mathcal{R}( p,q)}.
	\end{align*}
	
	Similarly, for $f(x)=	\mathbb{E}_{\mathcal{R}( p,q)}(\lambda x),$ one can use the relation \eqref{def4.3} to get:
	\begin{align*}
		\mathbb{E}_{\mathcal{R}( p,q)}(\lambda x)&=\sum_{n=0}^{\infty}(-1)^{n}\xi_{2}^{-\left(\begin{array}{ccc}n\\2\end{array}\right)}\dfrac{\lambda^{n}	\mathbb{E}_{\mathcal{R}( p,q)}(\lambda a) \xi_{2}^{\left(\begin{array}{ccc}n\\2\end{array}\right)}}{[n]_{\mathcal{R}( p,q)}!}(a\ominus x)^{n}_{\mathcal{R}( p,q)}
		\\
		&=\mathbb{E}_{\mathcal{R}( p,q)}(\lambda a)\sum_{n=0}^{\infty}(-1)^{n}\dfrac{\lambda^{n}}{[n]_{\mathcal{R}( p,q)}!}(a\ominus x)^{n}_{\mathcal{R}( p,q)}
		\\
		&=\mathbb{E}_{\mathcal{R}( p,q)}(\lambda a)\sum_{n=0}^{\infty}\dfrac{((\xi_{2}- \xi_{1})\lambda)^{n}}{(\xi_{1}\ominus \xi_{2})^{n}_{\mathcal{R}( p,q)}}(a\ominus x)^{n}_{\mathcal{R}( p,q)}.
	\end{align*}
	
\end{proof}


\begin{thm}
	For any polynomial $f(x)$ of degree $N$, and any number $a$, we have  the following $\mathcal{R}( p,q)$-Taylor expansion:
	\begin{align}
		f(x)=\sum_{k=0}^{N}\xi_{1}^{-\left(\begin{array}{ccc}k\\2\end{array}\right)}\dfrac{(\partial^{k}_{\mathcal{R}( p,q)}f)(a\xi_{1}^{-k})}{[k]_{\mathcal{R}( p,q)}!}(x\ominus a)^{k}_{\mathcal{R}( p,q)}
	\end{align}
	and 
	\begin{align}
		f(x)=\sum_{k=0}^{N}(-1)^{k}\xi_{2}^{-\left(\begin{array}{ccc}k\\2\end{array}\right)}\dfrac{(\partial^{k}_{\mathcal{R}( p,q)}f)(a\xi_{2}^{-k})}{[k]_{\mathcal{R}( p,q)}!}(a\ominus x)_{\mathcal{R}( p,q)}^{k}.
	\end{align}
\end{thm}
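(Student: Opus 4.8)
The plan is to expand $f$ in the power basis $\{(x\ominus a)^{k}_{\mathcal{R}(p,q)}\}_{k=0}^{N}$ and then recover the coefficients by repeated $\mathcal{R}(p,q)$-differentiation followed by evaluation at a well-chosen point. First I would note that each $(x\ominus a)^{k}_{\mathcal{R}(p,q)}=(x-a)(x\xi_{1}-a\xi_{2})\cdots(x\xi_{1}^{k-1}-a\xi_{2}^{k-1})$ is a polynomial in $x$ of degree exactly $k$, with leading coefficient $\xi_{1}^{\binom{k}{2}}\neq 0$. Hence these $N+1$ polynomials are linearly independent and span the space of polynomials of degree at most $N$, so there are unique scalars $c_{0},\dots,c_{N}$ with $f(x)=\sum_{k=0}^{N}c_{k}(x\ominus a)^{k}_{\mathcal{R}(p,q)}$. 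It then remains only to identify $c_{j}$.

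Next I would apply $\partial^{j}_{\mathcal{R}(p,q)}$ term by term and invoke the iterated derivative formula from \eqref{def 3.36}, namely $\partial^{j}_{\mathcal{R}(p,q)}(x\ominus a)^{k}_{\mathcal{R}(p,q)}=\xi_{1}^{\binom{j}{2}}\frac{[k]_{\mathcal{R}(p,q)}!}{[k-j]_{\mathcal{R}(p,q)}!}(\xi_{1}^{j}x\ominus a)^{k-j}_{\mathcal{R}(p,q)}$, which vanishes for $k<j$. This gives $(\partial^{j}_{\mathcal{R}(p,q)}f)(x)=\sum_{k\geq j}c_{k}\,\xi_{1}^{\binom{j}{2}}\frac{[k]_{\mathcal{R}(p,q)}!}{[k-j]_{\mathcal{R}(p,q)}!}(\xi_{1}^{j}x\ominus a)^{k-j}_{\mathcal{R}(p,q)}$. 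The decisive step is evaluation at $x=a\xi_{1}^{-j}$: the factor $(\xi_{1}^{j}x\ominus a)^{k-j}_{\mathcal{R}(p,q)}$ begins with $(\xi_{1}^{j}x-a)$, which becomes $(a-a)=0$ there, so every term with $k>j$ is annihilated, while the $k=j$ term contributes the empty product $1$ times $\xi_{1}^{\binom{j}{2}}[j]_{\mathcal{R}(p,q)}!$. Solving yields $c_{j}=\xi_{1}^{-\binom{j}{2}}(\partial^{j}_{\mathcal{R}(p,q)}f)(a\xi_{1}^{-j})/[j]_{\mathcal{R}(p,q)}!$, which is precisely the first expansion.

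For the second formula I would run the identical argument with the basis $\{(a\ominus x)^{k}_{\mathcal{R}(p,q)}\}_{k=0}^{N}$, using the companion rule $\partial_{\mathcal{R}(p,q)}(a\ominus x)^{n}_{\mathcal{R}(p,q)}=-[n]_{\mathcal{R}(p,q)}(a\ominus\xi_{2}x)^{n-1}_{\mathcal{R}(p,q)}$ from \eqref{def 3.36}, iterated to $\partial^{j}_{\mathcal{R}(p,q)}(a\ominus x)^{k}_{\mathcal{R}(p,q)}=(-1)^{j}\xi_{2}^{\binom{j}{2}}\frac{[k]_{\mathcal{R}(p,q)}!}{[k-j]_{\mathcal{R}(p,q)}!}(a\ominus\xi_{2}^{j}x)^{k-j}_{\mathcal{R}(p,q)}$. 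Here the evaluation point is $x=a\xi_{2}^{-j}$, at which $(a\ominus\xi_{2}^{j}x)^{k-j}_{\mathcal{R}(p,q)}$ vanishes for $k>j$, leaving the $k=j$ term $(-1)^{j}\xi_{2}^{\binom{j}{2}}[j]_{\mathcal{R}(p,q)}!$ and hence the claimed coefficient $(-1)^{j}\xi_{2}^{-\binom{j}{2}}(\partial^{j}_{\mathcal{R}(p,q)}f)(a\xi_{2}^{-j})/[j]_{\mathcal{R}(p,q)}!$.

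I expect the only delicate points to be the basis and degree verification in the first step, namely ensuring the leading coefficients $\xi_{1}^{\binom{k}{2}}$ are nonzero so that the change of basis is invertible, and the careful bookkeeping of the $\xi_{1}$- and $\xi_{2}$-powers arising in the iterated derivative formulas. The evaluation step that collapses the sum is structurally the $\mathcal{R}(p,q)$-analogue of the classical Taylor argument and should go through essentially verbatim.
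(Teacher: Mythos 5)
Your proof follows essentially the same route as the paper's: expand $f$ in the power basis $(x\ominus a)^{k}_{\mathcal{R}(p,q)}$ (resp.\ $(a\ominus x)^{k}_{\mathcal{R}(p,q)}$), apply the iterated $\mathcal{R}(p,q)$-derivative formulas of \eqref{def 3.36}, and evaluate at $x=a\xi_{1}^{-k}$ (resp.\ $x=a\xi_{2}^{-k}$) to annihilate the higher terms and isolate each coefficient. The only difference is that you explicitly justify existence and uniqueness of the expansion via the degree and leading-coefficient argument, a point the paper takes for granted; this is a minor strengthening of the same argument, not a different approach.
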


\begin{proof}
	Given a polynomial $f$ with degree $N$, then we have the following expansion: 
	\begin{equation}\label{py3.44}
		f(x)=\sum_{j=0}^{N}c_{j}(x\ominus a)_{\mathcal{R}( p,q)}^{j}.
	\end{equation}	
	For an integer $k,$ one can apply $\partial_{\mathcal{R}( p,q)}^{k}$ on both sides of this polynomial
	for $0\leq k \leq N$, and from equation $\eqref{def 3.36}$ to obtain
	
	\begin{align}
		(\partial^{k}_{\mathcal{R}( p,q)}f)(x)=\sum_{j=k}^{N}c_{j}\dfrac{[j]_{\mathcal{R}( p,q)}!}{[j-k]_{\mathcal{R}( p,q)}!}\xi_{1}^{\left(\begin{array}{ccc}n\\2\end{array}\right)}(\xi_{1}^{k}x\ominus a)^{j-k}_{\mathcal{R}( p,q)}.
	\end{align}
	Setting $x=(a\xi_{1}^{-k})$ yields
	
	$$	(\partial^{k}_{\mathcal{R}( p,q)}f)(a\xi_{1}^{-k})=c_{k}[k]_{\mathcal{R}( p,q)}!\xi_{1}^{\left(\begin{array}{ccc}n\\2\end{array}\right)},$$
with
	\begin{align}
		c_{k}=\xi_{1}^{-\left(\begin{array}{ccc}n\\2\end{array}\right)} \dfrac{(\partial^{k}_{\mathcal{R}( p,q)}f)(a\xi_{1}^{-k})}{[k]_{\mathcal{R}( p,q)}}.
	\end{align}	
	Similarly, for any polynomial $f$ with degree $N$, then we have the following expansion: 
	\begin{equation}\label{py3.44}
		f(x)=\sum_{j=0}^{N}c_{j}(a\ominus x)_{\mathcal{R}( p,q)}^{j}.
	\end{equation}	
	For an integer $k,$ one can apply $\partial_{\mathcal{R}( p,q)}^{k}$ on both sides of the polynomial \eqref{py3.44} for $0\leq k \leq N$, and from equation \eqref{def 3.36},
	
	\begin{align}
		(\partial^{k}_{\mathcal{R}( p,q)}f)(x)=\sum_{j=k}^{N}c_{j}(-1)^{j}\dfrac{[j]_{\mathcal{R}( p,q)}!}{[j-k]_{\mathcal{R}( p,q)}!}\xi_{2}^{-\left(\begin{array}{ccc}k\\2\end{array}\right)}(a\ominus \xi_{2}^{k}x)^{j-k}_{\mathcal{R}( p,q)}.
	\end{align}
	Setting $x=(a\xi_{2}^{-k}),$ we obtain;
	
	$$	(\partial^{k}_{\mathcal{R}( p,q)}f)(a\xi_{2}^{-k})=c_{k}(-1)^{k}[k]_{\mathcal{R}( p,q)}!\xi_{2}^{-\left(\begin{array}{ccc}k\\2\end{array}\right)},$$
with
	\begin{align}
		c_{k}=(-1)^{k}\xi_{1}^{-\left(\begin{array}{ccc}k\\2\end{array}\right)} \dfrac{(\partial^{k}_{\mathcal{R}( p,q)}f)(a\xi_{2}^{-k})}{[k]_{\mathcal{R}( p,q)}!}.
	\end{align}	
	This ends the proof.	
\end{proof}

\section{$p$-adic spin Lie Group}\label{Adic}
Elementary particles have very interesting structures which link to many fields of mathematics. In \cite{ref33}, the $\Spin (\frac{1}{2})$(resp. Bosons) is a spin Lie group. These Lie groups have Lie algebras which can be complexified as follows 
\begin{gather*}
	\spin \left(\frac12,\mathbb{C} \right)=  \spin_{\BR} \left(\frac12\right) \oplus 
	i \spin_{\BR} \left(\frac12\right),
\end{gather*}
where $\spin_{\BR} \left(\frac12\right)$ is the real form of the spin half Lie algebra and for $\hbar=1$
$\spin_{\BR} \left(\frac12\right)\subset \sl(2,\BR)$. For the real Lie algebra of the spin half particle, the Iwasawa decomposition is given by 
\begin{equation*}
	\spin \left(\frac{1}{2}\right)= <S_{k}> \oplus  <S_{z}> \oplus <S_{+}>,
	\label{eq47}
\end{equation*} 
where $S_{k}$ is the set of skew symmetric $2\times 2$ matrices; $S_{z}$ is a set of $2\times 2$ real diagonal trace zero matrices; and $S_{+}$ is a set of upper triangular $2\times 2$ matrices with zeros on the diagonal.
The aim of this section is to answer questions $1, 2, 4$ and $5$ in the Introduction \ref{INT} and to show that the roots of the fermionic spin Lie group have a friendly ghost.
\subsubsection{Fermionic $p$-adic spin Lie group and its Lie algebra}
From our previous paper, the spin particle has Lie algebra structure with the integer spins (Bosons) and the odd integers half spins (fermions).
We have seen that the parastatistics have Hopf algebraic structure.
The Lie algebra $\spin(j)$ of spin particles can be represented by classical matrices, which makes it easier to see their algebraic nature \cite{ref15, ref17, ref22}: 
\[ 
\spin(j)=
\begin{cases}
	\text{higgs}    & \text{$j=0$;}\\
	\text{fermions} & \text{$j=(\frac{1}{2})\BZ_{p}$ when odd integer spins are considered};\\ 
	\text{bosons}   & \text{$j=\BZ$ when positive integer spins are considered}.
\end{cases}
\]	
 We denote by $p$ an odd-prime number and by $\mathbb{Q}_{p}$ the field of $p$-adic numbers with the normalized absolute value $\mid x\mid$, $x\in \mathbb{Q}_{p}$.
The Lie algebra $\sl(2n,\BC)$ can represent the fermion spin Lie algebra of elementary particles in quantum physics \cite{ref21}. As indicated in the mapping below. We define $j=(\frac{1}{2})\BZ_{p}$ as fraction of the form  \cite{ref6, ref21, ref22}:
\begin{equation*}
	\xymatrix{\sl(2n,\BZ_{p}) \ar[r] & \spin_{\BZ_{p}}\left(\frac{1}{2}\right) \ar[r] & \text{$p-$adic fermions}.}
\end{equation*}
The Lie group $\SL(2n,\BC)$ structure can represent the fermion spin Lie group analog \cite{ref6, ref21, ref22}: 	
\begin{equation*}
	\xymatrix{\SL(2n,\BZ_{p}) \ar[r] & \Spin_{\BZ_{p}}\left(\frac{1}{2}\right) \ar[r] & \text{$p-$adic fermions}.}
\end{equation*}

\begin{defn}
	A Poisson manifold is a smooth manifold $M$ together with a map
	$\left\lbrace ,\right\rbrace  :C^{\infty}(M) \times C^{\infty}(M) \longrightarrow C^{\infty}(M)$, the Poisson bracket, that satisfies:
	\begin{enumerate}
		\item [(i)]	antisymmetry: $ \left\lbrace f_{1},f_{2} \right\rbrace= -\left\lbrace f_{2},f_{1} \right\rbrace \mbox{for all} f_{1}, f_{2} \in C^{\infty}(M)$.
		\item [(ii)]	Leibniz identity: $\left\lbrace f_{1}\circ f_{2}, f_{3} \right\rbrace= \left\lbrace f_{1},f_{3} \right\rbrace f_{2} + \left\lbrace f_{2},f_{3} \right\rbrace f_{1}$.
		\item [(iii)]	Jacobi identity:  $\left\lbrace \left\lbrace f_{1}, f_{2}  \right\rbrace, f_{3} \right\rbrace + \left\lbrace \left\lbrace f_{2}, f_{3}  \right\rbrace, f_{1} \right\rbrace +\left\lbrace \left\lbrace f_{3}, f_{1}  \right\rbrace, f_{2} \right\rbrace=0$.
	\end{enumerate}
\end{defn}
A Hamiltonian manifold is a manifold together with a Hamiltonian structure \cite{ref30}.
A Hamiltonian map (Poisson map) from a Hamiltonian manifold $(M, \left\lbrace, \right\rbrace M)$ to a Hamiltonian manifold $(M_{1}, \left\lbrace, \right\rbrace M_{1})$ is a smooth map $\psi : M \longrightarrow M_{1}$ that satisfies
$$ \left\lbrace f_{1} \circ \psi, f_{2} \circ \psi \right\rbrace_{M}= \left\lbrace f_{1}, f_{2} \right\rbrace_{ M_{1}} \circ\psi$$	$ \mbox{for all}  f_{1}, f_{2}\in C^{\infty}(M_{1}).$

\subsubsection{Hamilton spin Lie group}
\begin{defn}\cite{ref30}
	Let the map $\mu:G \times G \longrightarrow G$ defined by $\mu(x, y)=xy$ be grouped, if it is Hamiltonian then $G$ is said to have a grouped Hamiltonian structure.	
\end{defn}

\begin{defn}	
	A spin Lie group $G$ together with a grouped Hamiltonian structure on it will be called a Hamilton-spin Lie group.
\end{defn}

\begin{ex}
	The elementary spin particles Lie groups such as Fermions spin Lie groups:
	$$\Spin \left( \frac12\right) , \Spin \left(\frac32 \right), \Spin \left(\frac52 \right), \Spin \left(\frac72 \right),\cdots $$ and Bosons $\Spin (1), \Spin (2)$ are all Hamilton-spin Lie groups.
\end{ex}


\begin{defn}
	The spin Lie group of all spin one-half particles with quantum state spanned by 2 states, $2\times2$ real matrices and determinant, $1$ when $\hbar=1$ is denoted by $\Spin_{\BR}(\frac12)$.
\end{defn}
\bigskip
The real Lie algebra $\g$ of the $\Spin_{\BR}(\frac12)$ is given by:
\begin{align}
	\spin_{\BR}\left(\frac{1}{2}\right)={\{S\in M_{2}(\BR) \mid \Tr S=0 }\}.
\end{align}


\begin{defn}
	A $p$-adic spin Lie group is a Hamilton-spin Lie group with a fermionic spin Lie group structure. 
\end{defn}
The $p$-adic spin Lie algebra of a spin half particle is given by:
\begin{align*}
	\spin_{\BZ_{p}}\left(\frac{1}{2}\right)\subset \spin_{\BR}\left(\frac{1}{2}\right) \subset \spin_{\BC}\left(\frac{1}{2}\right).
\end{align*}

Let $L$ be a finite extension of the $\mathbb{Q}_{p}$ contained in an algebraically closed field $k$, let $\varrho$ be the integer ring of $L$, and let $\rho$ be the maximal ideal of $\varrho$ \cite{ref54}.   
Let $G=\SL(2,L)$ and let $K=\SL(2,\varrho)$. Then $K$ is an open compact subgroup of $G$. We can observe that;
$\SL(2,L)\supset\Spin_{\BZ_{p}}\left(\frac12\right)$ and 
\begin{align}
	\g=\spin_{\BZ_{p}}\left(\frac{1}{2}\right)={\{S\in M_{2}(L) \mid \Tr S=0 }\}.
\end{align}
Then $\g$ becomes a Lie algebra with
\begin{align*}
	\mathfrak{g}\times \mathfrak{g}\ni(x,y)\mapsto [x,y]=xy-yx \in \mathfrak{g},
\end{align*} 
We consider $\spin_{\BZ_{p}}(\frac{1}{2})$ as the fermionic $p-$adic spin Lie algebra of the \\$ \Spin_{\BZ_{p}}\left(\frac12\right) \subset\SL(2,L).$ Since $K$ is an open subgroup of $\SL(2,L)$ and\\ $\spin_{\BZ_{p}}(\frac12)$ can also be regarded as the Lie algebra of $K$.


\begin{prop}\label{prop3}
	The fermionic $p$-adic spin Lie algebra, $\spin_{\BZ_{p}}(\frac{1}{2})$ can be generated by
	\begin{enumerate}
		\item the elements 
		$${S_{-}}=\hbar
		\begin{pmatrix}
			0 & 0\\
			1 & 0
		\end{pmatrix},\quad 
		S_{z}=\frac{\hbar}{2}
		\begin{pmatrix}
			1 & 0\\
			0 & -1
		\end{pmatrix},\quad 
		S_{+}=\hbar 
		\begin{pmatrix}
			0 & 1\\
			0 & 0
		\end{pmatrix}
		$$
		
		\item the commutation relations are given by:
		$$
		[{S_{z}},S_{+}]=2 \hbar S_{+},\quad 	
		[{S_{z}},S_{-}]=-2\hbar S_{-},\quad 
		[{S_{+}},S_{-}]=\hbar S_{z}.
		$$
	\end{enumerate} 
	
\end{prop}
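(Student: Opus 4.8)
The plan is to treat $\g=\spin_{\BZ_{p}}(\tfrac12)$ exactly as it was defined above, namely as the space of traceless $2\times2$ matrices over $L$, and to exhibit $S_{-},S_{z},S_{+}$ as an $L$-basis of $\g$ satisfying the asserted brackets. First I would record that $\g$ is three-dimensional over $L$: inside the four-dimensional algebra $M_{2}(L)$ the single linear condition $\Tr S=0$ cuts out a subspace of codimension one. I would then check that each proposed generator actually lies in $\g$, which is immediate since $\Tr S_{-}=\Tr S_{z}=\Tr S_{+}=0$.

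Next I would prove that the three matrices span $\g$. Because $S_{-}$ is supported on the strictly lower entry, $S_{z}$ on the diagonal, and $S_{+}$ on the strictly upper entry, any relation $aS_{-}+bS_{z}+cS_{+}=0$ with $a,b,c\in L$ forces $a=b=c=0$ upon reading off the individual matrix entries. Hence the three elements are linearly independent, and since $\dim_{L}\g=3$ they form a basis; in particular they generate $\g$ both as an $L$-module and, a fortiori, as a Lie algebra. That $\g$ is closed under the bracket $[x,y]=xy-yx$ is automatic from $\Tr(xy)=\Tr(yx)$, so no further verification is needed for part (1).

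For part (2) I would simply compute the three commutators by multiplying the explicit matrices in the displayed normalization, evaluating $S_{z}S_{+}-S_{+}S_{z}$, $S_{z}S_{-}-S_{-}S_{z}$ and $S_{+}S_{-}-S_{-}S_{+}$ entrywise and comparing each result against $S_{+}$, $S_{-}$ and $S_{z}$ respectively. The only point demanding care is the bookkeeping of the scalar $\hbar$ across each product, since the generators carry the prefactors $\hbar$, $\tfrac{\hbar}{2}$, $\hbar$; tracking these constants is the sole (and entirely mechanical) obstacle, after which the asserted structure relations of $\sl(2)$ emerge directly. This yields the desired presentation of $\g$ by generators and relations.
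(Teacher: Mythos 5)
Your route is genuinely different from the paper's. The paper proves part (1) by a group-level argument: it conjugates an arbitrary traceless $S$ into diagonal or nilpotent form over the algebraically closed field $k$, estimates the $p$-adic convergence of $\exp(tS)$, shows $\exp(tS)\in\SL(2,L)$ with $|\Tr(\exp(tS))-2|$ small, and then uses the logarithm to conclude that the exponential image of $\spin_{\BZ_{p}}(\frac{1}{2})$ contains all sufficiently small principal congruence subgroups $K_{n}$ of $\SL(2,\varrho)$; part (2) is dismissed there as trivial. You instead read the statement as linear algebra: the three matrices are traceless, linearly independent because they are supported on disjoint entries, hence an $L$-basis of the codimension-one subspace $\{S\in M_{2}(L):\Tr S=0\}$. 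That part of your argument is correct, more elementary, and closer to the literal wording ``can be generated by the elements''; what it does not capture is the Lie-group content of the paper's proof (the $\exp$/$\log$ correspondence with congruence subgroups), which is what the authors actually establish.

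The genuine gap is in your part (2). You claim that after mechanical bookkeeping of the prefactors $\hbar,\ \frac{\hbar}{2},\ \hbar$ ``the asserted structure relations emerge directly.'' They do not. With the displayed matrices one finds
\begin{equation*}
[S_{z},S_{+}]=\frac{\hbar^{2}}{2}\begin{pmatrix}0&2\\0&0\end{pmatrix}=\hbar\,S_{+},\qquad
[S_{z},S_{-}]=-\hbar\,S_{-},\qquad
[S_{+},S_{-}]=\hbar^{2}\begin{pmatrix}1&0\\0&-1\end{pmatrix}=2\hbar\,S_{z},
\end{equation*}
so the factors of $2$ land in exactly the opposite places from those asserted in the proposition. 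The printed relations $[S_{z},S_{\pm}]=\pm 2\hbar S_{\pm}$ and $[S_{+},S_{-}]=\hbar S_{z}$ hold only if one takes $S_{z}=\hbar\begin{pmatrix}1&0\\0&-1\end{pmatrix}$, i.e.\ drops the $\frac{1}{2}$. Carried out honestly, your computation therefore refutes the constants as printed rather than confirming them --- it exposes a normalization inconsistency in the statement itself, which the paper never detects because it declares part (2) trivial. To repair your proof you must either record the corrected relations $[S_{z},S_{\pm}]=\pm\hbar S_{\pm}$, $[S_{+},S_{-}]=2\hbar S_{z}$ (the standard spin-$\frac{1}{2}$ ladder relations), or rescale $S_{z}$ and then verify the relations as displayed; as written, the final step of your argument asserts identities that are false.
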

\begin{proof} For $(1)$ 
	let $S$ be an element of $\spin_{\BZ_{p}}(\frac12)$. Since $k$ is an algebraically closed field containing $L$, there is $A\in \SL(2,k)$ such that $A^{-1}SA$ has the form  
	
	\begin{enumerate}
		\item [(i)]
		$$
		\begin{pmatrix}
			\hbar & 0\\
			0 & -\hbar
		\end{pmatrix}, \hbar \in k \quad
		\mbox{or}\quad
		(ii)	
		\begin{pmatrix}
			0 & 1\\
			0 & 0
		\end{pmatrix}.
		$$
	\end{enumerate} 
	Let $t$ be an element of $L$, and let 
	\begin{gather*}
		\exp(tS)=\sum_{n=0}^{\infty}\dfrac{(tS)^{n}}{n!} \quad \mbox{in }\quad   M_{2}(L).
	\end{gather*} 
	Since $$A^{-1}(tS)A=
	\begin{pmatrix}
		t\hbar & 0\\
		0 & -t\hbar
	\end{pmatrix} \quad \mbox{or} \quad \begin{pmatrix}
		0 & t\\
		0 & 0
	\end{pmatrix},$$
	this series converges for $|t\hbar|<|p^{1/(p-1)}|$ in case (i), and converges for any $t$ in case (ii). 
	If this condition is satisfied, $\exp(tS)$ is an element of $M_{2}(L)\cap\SL(2,k)=\SL(2,L)$. Since
	
	$$A^{-1}\exp(tS)A=
	\begin{pmatrix}
		\exp(t\hbar) & 0\\[0.5ex]
		0 & \exp(-t\hbar)
	\end{pmatrix}\quad \mbox{or} \quad \begin{pmatrix}
		1 & t\\
		0 & 1
	\end{pmatrix},$$\\
	$\exp(tS)$ satisfies $$|\Tr(\exp(tS))-2|=|(\exp(t\hbar)-1)(\exp(-t\hbar)-1)|<|p^{2/(p-1)}|$$
	and $|\Tr(\exp(tS))-2|=0$ in case (ii). We observe \cite{ref54} that any element $g\in\SL(2,L)$ satisfying $|\Tr(g)-2|<|p^{2/(p-1)}| $ can be written as $g=\exp(X)$ with
	\begin{gather*}
		X=\dfrac{\sum_{n=1}^{\infty}(g-1)^{n}(-1)^{n-1}}{n}=\spin_{\BZ_{p}}\left(\frac12\right).
	\end{gather*}
	Specifically, the image if the exponential map contains any sufficiently small principal congruence subgroups
	\begin{align}
		K_{n}=\left\{g\in SL(2, \varrho)| g=\begin{pmatrix}
			1 & 0\\
			0 & 1
		\end{pmatrix}\mod p^{n}  \right\}
	\end{align} 
	of $K$.
	The $(2)$ of this proposition is trivial.
\end{proof}
\begin{prop}
	For any $\spin_{\BZ_{p}} (\frac{1}{2})$ there exists elements $S_{+}, S_{-},  \mbox{and} \quad S_{z}$ which generate the Iwasawa algebras when we set $\hbar=1$.
\end{prop}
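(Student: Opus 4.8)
The plan is to realize the three summands of the Iwasawa decomposition
$$\spin_{\BZ_p}\left(\frac12\right)=\langle S_k\rangle\oplus\langle S_z\rangle\oplus\langle S_+\rangle$$
recalled at the start of this section explicitly as spans of linear combinations of the generators produced in Proposition~\ref{prop3}. First I would set $\hbar=1$ in that proposition, so that the three generators become the trace-zero matrices
$$S_-=\begin{pmatrix}0&0\\1&0\end{pmatrix},\qquad S_z=\frac12\begin{pmatrix}1&0\\0&-1\end{pmatrix},\qquad S_+=\begin{pmatrix}0&1\\0&0\end{pmatrix}.$$
These are manifestly $L$-linearly independent and therefore form a basis of the three-dimensional Lie algebra $\spin_{\BZ_p}(\frac12)=\{S\in M_2(L)\mid \Tr S=0\}$, which is the first thing I would record.

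Next I would match each Iwasawa summand to a combination of the generators. The nilpotent part $\langle S_+\rangle$ of strictly upper-triangular matrices is spanned by $S_+$ directly, and the abelian part $\langle S_z\rangle$ of diagonal trace-zero matrices is spanned by $S_z$ directly; the only summand not produced by a single generator is the skew-symmetric part $\langle S_k\rangle$. For this I would compute
$$S_+-S_-=\begin{pmatrix}0&1\\-1&0\end{pmatrix},$$
which is skew-symmetric and hence spans the one-dimensional space $\langle S_k\rangle$. Thus $\{\,S_+-S_-,\,S_z,\,S_+\,\}$ is a basis of $\spin_{\BZ_p}(\frac12)$ adapted to the Iwasawa decomposition, and since every member of it lies in the span of $\{S_+,S_-,S_z\}$, I would conclude that $S_+,S_-,S_z$ indeed generate the three Iwasawa algebras.

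The argument is essentially linear algebra, so I do not expect a deep obstacle; the one point requiring care is to confirm that the change-of-basis matrix from $\{S_+,S_-,S_z\}$ to $\{S_+-S_-,S_z,S_+\}$ is invertible over $L$ (equivalently over the integer ring $\varrho$), which a short determinant computation settles, and to make sure the skew-symmetric summand really plays the role of the maximal compact piece in the $p$-adic setting rather than merely its real-form analogue. I would also check against the commutation relations of Proposition~\ref{prop3} that the pieces close up correctly, with $\langle S_z\rangle\oplus\langle S_+\rangle$ forming the solvable (Borel) part and $\langle S_+\rangle$ its nilradical, which completes the identification.
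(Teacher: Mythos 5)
Your proposal is correct and takes essentially the same route as the paper, whose entire proof is the single sentence that the claim ``follows trivially from Proposition~\ref{prop3}'': you set $\hbar=1$ in that proposition and verify the resulting $S_+,S_-,S_z$ span the three Iwasawa summands. The details you add (identifying $S_+-S_-$ as the skew-symmetric generator, checking the change of basis is invertible, and confirming closure under the bracket) are exactly the linear algebra the paper leaves implicit.
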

\begin{proof}
	the proof of this follows trivially from proposition \ref{prop3}.	
\end{proof}

\subsubsection{The principal congruence subgroup}
The principal congruence subgroup of $\Spin_{\BZ_{p}}\left(\frac{1}{2}\right)$  where $p$ is an odd prime.
We put 
\begin{align*}
	G_{i}&:= \left[\ker \Spin_{\BZ_{p}}\left(\frac{1}{2}\right) \rightarrow \Spin_{(\BZ_{p}/p^{i}\BZ)}\left(\frac{1}{2}\right)\right]
	\\
	\nonumber
	&= \left\lbrace I + A \mid \det (I +A)=1 \quad and \quad A\in M_{2}(p^{i}\BZ_{p})\right\rbrace .
\end{align*}
\begin{align*}
	L_{i}:= \spin_{(p^{i}\BZ_{p})}\left(\frac{1}{2}\right)= \left \lbrace A\in M_{2}(p^{i}\BZ_{p})| \Tr (A)=0 \right\rbrace.
\end{align*}
If $p$ is an odd prime then for all $i\leq1$, 
$$ G_{i}= \log (L_{i}); \quad L_{i}=\exp (G_{i})$$
it follows immediately that $G_{i+1}=G_{i}^{p}$ and $G_{i}$ is a uniform pro-$p$ group of dimension $3$.  
\\
Now we set $\hbar=1$ for the $\spin_{(p^{i}\BZ_{p})}\left(\frac{1}{2}\right)$ and choose a basis which contains a base for the Cartan  subalgebra, and a base for each of the root spaces such as 
\begin{enumerate}
	\item the elements 
	$${S_{-}}=p^{i}
	\begin{pmatrix}
		0 & 0\\
		1 & 0
	\end{pmatrix},\quad 
	S_{z}=\frac{p^{i}}{2}
	\begin{pmatrix}
		1 & 0\\
		0 & -1
	\end{pmatrix},\quad 
	S_{+}=p^{i} 
	\begin{pmatrix}
		0 & 1\\
		0 & 0
	\end{pmatrix}
	$$
	
	\item The commutation relations of such basis are:
	$$
	[{S_{z}},S_{+}]=2 p^{i} S_{+},\quad 	
	[{S_{z}},S_{-}]=-2 p^{i} S_{-},\quad 
	[{S_{+}},S_{-}]=p^{i} S_{z}
	$$
\end{enumerate} 
and the mapping 
$$(xS_{-}+yS_{z}+zS_{+}) \mapsto \left(\begin{array}{ccc}
	x\\
	y\\
	z
\end{array}\right)$$
is a bijection of $\spin_{(p^{i}\BZ_{p})}\left(\frac{1}{2}\right)$ and $(\BZ_{p})^{3}$.

\begin{thm}\label{5.8}\cite{ref4}
	Let $\mathcal{L}$ be a $3$-dimensional $\BZ_{p}$ - fermionic spin Lie algebra. Then there is a ternary quadratic form $f(x)\in \BZ_{p}[x_{1},x_{2},x_{3}]$ unique up to equivalence, such that, for $i\leq 0$,  $$\zeta_{p^{i},\mathcal{L}}(s)=\zeta_{\BZ_{p}^{3}}(s)-Z_{f}(s-2)\zeta_{p}(2s-2)p^{(2-s)(i+1)}(1-p^{-1})^{-1},$$
	where $Z_{f}(s)$ is Igusa's local zeta function associated to $f$.
\end{thm}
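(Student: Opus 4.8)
The plan is to compute $\zeta_{p^{i},\mathcal{L}}(s)$ by the standard method of realizing a local subalgebra zeta function as a $p$-adic integral and then separating out the contribution of the bracket relation. First I would fix the $\BZ_{p}$-basis $\{S_{-},S_{z},S_{+}\}$ of $\mathcal{L}=\spin_{\BZ_{p}}(\frac{1}{2})$ supplied by Proposition~\ref{prop3}, with $\hbar=1$, so that the only nonzero structure constants come from $[S_{z},S_{+}]=2S_{+}$, $[S_{z},S_{-}]=-2S_{-}$ and $[S_{+},S_{-}]=S_{z}$. Relative to this basis every finite-index $\BZ_{p}$-subalgebra $H\leq\mathcal{L}$ is represented by a unique upper-triangular matrix $M=(m_{jk})\in M_{3}(\BZ_{p})$ in Hermite normal form, with $|\mathcal{L}:H|^{-1}=|\det M|_{p}$. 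Summing $|\det M|_{p}^{s}$ over the admissible $M$ and replacing the discrete sum by Haar measure on the corresponding $p$-adic cells writes $\zeta_{p^{i},\mathcal{L}}(s)$ as an integral $\int|\det M|_{p}^{s}\,d\mu(M)$ over those $M$ whose row span is closed under the bracket; the superscript $i$ records that all indices are measured inside the congruence lattice $L_{i}=p^{i}\mathcal{L}$.

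Next I would make the closure condition explicit. Writing $v_{1},v_{2},v_{3}$ for the rows of $M$ and imposing $[v_{j},v_{k}]\in\langle v_{1},v_{2},v_{3}\rangle_{\BZ_{p}}$, the $\sl_{2}$-type structure constants force all but one of these membership conditions to be linear (hence resolvable by the Hermite reduction), leaving a single genuinely quadratic divisibility condition on the entries of $M$. The crucial point is that this surviving condition is governed by the determinant of the bilinear pairing built from the structure constants, that is, by a ternary quadratic form $f(x_{1},x_{2},x_{3})\in\BZ_{p}[x]$ intrinsically attached to $\mathcal{L}$. Under a change of $\BZ_{p}$-basis the structure constants transform by the adjoint action, so $f$ transforms by congruence and is therefore well defined up to equivalence; nondegeneracy of $f$ (its discriminant is a $p$-adic unit, reflecting semisimplicity of $\sl_{2}$) pins it down within the $p$-adic classification of ternary forms. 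With the quadratic condition isolated, the part of the integral that ignores it factors off as the cotype zeta function of the rank-$3$ lattice, namely $\zeta_{\BZ_{p}^{3}}(s)=\zeta_{p}(s)\zeta_{p}(s-1)\zeta_{p}(s-2)$.

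I would then integrate out the off-diagonal entries not constrained by $f$, which contribute only elementary powers of $p$, and reduce the constrained part of the integral to $\int_{\BZ_{p}^{3}}|f(x)|_{p}^{\,s-2}\,dx$; by definition this is Igusa's local zeta function $Z_{f}(s-2)$, the shift $s-2$ arising from the two extra weight factors attached to the diagonal entries that carry the quadratic constraint. Summing the geometric series in the valuation of the one diagonal parameter along which $f$ is active produces the factor $\zeta_{p}(2s-2)$, while passing from $\mathcal{L}$ to the congruence lattice $L_{i}=p^{i}\mathcal{L}$ rescales every index by $p^{i}$ and contributes the elementary factor $p^{(2-s)(i+1)}(1-p^{-1})^{-1}$ from the geometric sum over the admissible rescalings. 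Subtracting the constrained contribution from the unconstrained count $\zeta_{\BZ_{p}^{3}}(s)$ then yields the stated identity.

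The hard part will be the second step: extracting the correct ternary form $f$ from the structure constants and proving that the constrained $p$-adic integral is exactly $Z_{f}(s-2)$ with the precise shift and the companion factor $\zeta_{p}(2s-2)$. This demands careful Hermite-normal-form bookkeeping so that precisely one quadratic relation survives, a change of variables putting the bracket condition into the normal form $f(x)\equiv 0\pmod{p^{k}}$, and the verification that $\mathrm{disc}(f)$ is a unit so that $f$ is nondegenerate and determined up to equivalence. Once the quadratic-form structure is in place, convergence of all the integrals for $\Re(s)$ sufficiently large, together with the location of the abscissa of convergence, follows from the general theory of such cone integrals.
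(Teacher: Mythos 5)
First, a point of order: the paper does not prove Theorem~\ref{5.8} at all. It is stated with a citation to Klopsch--Voll \cite{ref4} and used as a black box to obtain the formula for $\zeta_{\spin_{\BZ_{p}}(\frac{1}{2})}(s)$ displayed just after it, so there is no internal proof to compare yours against; the relevant comparison is with the cited source. Its strategy --- parametrize finite-index sublattices by Hermite normal form, rewrite the count as a $p$-adic integral, reduce bracket-closure to a divisibility condition on a ternary quadratic form, and evaluate via Igusa's local zeta function --- is exactly the skeleton you describe, so in outline you are on the right track.

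As a proof, however, your proposal has three genuine gaps. (1) The central object is misidentified: $f$ is not ``the determinant of the bilinear pairing built from the structure constants.'' For a $3$-dimensional algebra the bracket $\Lambda^{2}\mathcal{L}\to\mathcal{L}$, read through $\Lambda^{2}\mathcal{L}\cong\mathcal{L}^{*}\otimes\Lambda^{3}\mathcal{L}$, is a bilinear form on $\mathcal{L}^{*}$, and $f$ is its associated quadratic form --- concretely the form extracted from the antisymmetric matrix of linear forms $R(x)$ that the paper writes down immediately after the theorem, giving $f(x)=x_{3}^{2}+4x_{1}x_{2}$ here. It is this functorial construction, not nondegeneracy, that makes $f$ well defined up to equivalence under change of $\BZ_{p}$-basis; nondegeneracy in fact fails for general $3$-dimensional $\mathcal{L}$ (the Heisenberg algebra yields $f(x)=x_{3}^{2}$), and working only with the fixed basis of Proposition~\ref{prop3} cannot establish the uniqueness claim. (2) Your subtraction step is incoherent as written: the lattices satisfying the closure constraint \emph{are} the subalgebras, so ``unconstrained count minus constrained contribution'' counts the \emph{non}-subalgebras. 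The mechanism that actually produces the minus sign is the standard identity between the Poincar\'e series of $f$ and Igusa's zeta function,
\[
\sum_{k\geq 0}\mu\left(\left\{x\in\BZ_{p}^{3}\;:\;p^{k}\mid f(x)\right\}\right)t^{k}
=\frac{1-t\,Z_{f}(s)}{1-t},\qquad t=p^{-s},
\]
applied to the level-dependent divisibility expressing closure; this single identity generates $Z_{f}$, the subtraction, and the geometric factors simultaneously, and it is absent from your argument. (3) Relatedly, the level $i$ does not enter by ``rescaling all indices'': a sublattice $p^{i}M\leq p^{i}\mathcal{L}$ is a subalgebra iff $p^{i}[M,M]\subseteq M$, so $i$ shifts the exponent in the divisibility condition on $f$, which is what the factor $p^{(2-s)(i+1)}$ records (consistently, the correction term vanishes as the condition is relaxed and every lattice qualifies). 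Since you yourself defer the derivation of the shift $s-2$, the factor $\zeta_{p}(2s-2)$, and $(1-p^{-1})^{-1}$ to ``the hard part,'' what you have is a correct high-level strategy matching \cite{ref4}, but not yet a proof of the stated identity.
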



\subsubsection{Zeta function of fermion $\spin_{\BZ_{p}}\left(\frac{1}{2}\right)$ \mbox{Lie algebra}.} The $p$-adic spin Lie algebra of a spin half particle, $\spin_{\BZ_{p}}\left(\frac{1}{2}\right)$ has the direct sum:
\begin{align}
	\spin_{\BZ_{p}}\left(\frac{1}{2}\right)=\BZ_{p}S_{-}\oplus\BZ_{p}S_{+}\oplus\BZ_{p}S_{z},
\end{align}
where 
$$
[{S_{z}},S_{+}]=2 S_{+},\quad 	
[{S_{z}},S_{-}]=-2 S_{-},\quad 
[{S_{+}},S_{-}]= S_{z}.
$$
We obtain
$$R(x)= \left(\begin{array}{ccc}
	& x_{3} & -2x_{1}\\
	-x_{3} &  & 2x_{2}\\
	2x_{1}& -2x_{2} & 
\end{array}\right).$$
The ternary quadratic form is given by:
$f(x)=x_{3}^{2}+4x_{1}x_{2}$. Specifically for a fermion when $p=2 $ the  function $f$ defines a smooth conic in a projective 2-space which has a good reduction modulo $p$ and $p+1$ points over $\BF_{p}$.
Using the Igusa local zeta function we observe that;
$$Z_{f}(s-2)=\dfrac{(1-p^{-1})(1-p^{-1}t)}{(1-pt^{2})(1-pt)}.$$
From Theorem \ref{5.8} we obtain the formula
\begin{align}
	\zeta_{\spin_{\BZ_{p}}\left(\frac{1}{2}\right)}(s)&=\zeta_{\BZ_{p}^{3}}-\dfrac{(1-p^{-1}t)p^{2}t}{(1-pt)(1-p^{2}t)(1-pt^{2})(1-p^{2}t^{2})}\\
	\nonumber
	&=\zeta_{p}(s)\zeta_{p}(s-1)\zeta_{p}(2s-1)\zeta_{p}(2s-2)\zeta_{p}(3s-1)^{-1}.
\end{align}

\begin{thm}
	The only ghost polynomials associated with the fermionic $\Spin_{\BZ_{p}} (\frac{1}{2})$ Lie groups are the $\GO_{2l+1}$, $\GSp_{2l}$ or $\GO_{2l}^{+}$ of type $\Pi (B_{l})$, $\Pi (C_{l})$ and $\Pi( D_{l})$ respectively.
\end{thm}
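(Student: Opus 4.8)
The plan is to combine the structural classification of spin Lie groups from Theorem~\ref{thm1} and Theorem~\ref{thm4} with the ghost-polynomial framework of Theorem~\ref{thmG}, using the explicit factorization of $\zeta_{\spin_{\BZ_{p}}(\frac{1}{2})}(s)$ established above as the bridge between the representation-theoretic and analytic sides.

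First I would invoke Theorem~\ref{thm1}(ii): any semisimple spin Lie group $\Spin(J)$ has simple roots forming one of the Dynkin systems $\Pi(B_n)$ or $\Pi(D_n)$, associated respectively with $\SO(2n+1)$ and $\SO(2n)$. Specializing to the fermionic half-integer case and applying Theorem~\ref{thm4}, the state space of $\spin(\frac{2n-1}{2})$ is spanned by $2n$ states and is transformed into the compact group $\SO(2n)$ of type $\Pi(D_n)$, while the companion family $\SO(2n+1)$ supplies type $\Pi(B_n)$. Because $\Spin_{\BZ_{p}}(\frac{1}{2})$ additionally carries a grouped Hamiltonian, hence symplectic, structure as a Hamilton-spin Lie group, the symplectic family $\GSp_{2l}$ of type $\Pi(C_l)$ also enters. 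This identifies the three classical families $\GO_{2l+1}$, $\GSp_{2l}$, $\GO_{2l}^{+}$ of types $\Pi(B_l)$, $\Pi(C_l)$, $\Pi(D_l)$ as precisely those matching the fermionic spin root data.

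Next I would confirm that these are genuinely the ghost polynomials, not merely the Dynkin labels, by matching analytic invariants. Theorem~\ref{thmG} records that each $\BZ_{G}(s)$ for $G\in\{\GO_{2l+1},\GSp_{2l},\GO_{2l}^{+}\}$ has abscissa of convergence $a_l+1$ and a natural boundary at a prescribed line $\Re(s)=\beta$. The factorization $\zeta_{\spin_{\BZ_{p}}(\frac{1}{2})}(s)=\zeta_{p}(s)\zeta_{p}(s-1)\zeta_{p}(2s-1)\zeta_{p}(2s-2)\zeta_{p}(3s-1)^{-1}$ obtained from Theorem~\ref{5.8} exhibits the same product-of-translates shape, so its abscissa of convergence and its natural boundary fall into the pattern of Theorem~\ref{thmG}; reading off the exponents pins down the correspondence to the $B_l$, $C_l$, $D_l$ families and shows the group is ghost friendly.

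Finally, for the word \emph{only} I would argue by exhaustion. Since Theorem~\ref{thm1}(ii) is a biconditional, a semisimple spin Lie group cannot carry a root system of type $A_l$, which would force the linear families $\GL$ or $\SL$, nor any exceptional type; these are therefore excluded from producing ghost polynomials associated with $\Spin_{\BZ_{p}}(\frac{1}{2})$. The hard part will be this uniqueness step: showing rigorously that the fermionic half-spin structure is incompatible with type $A_l$ and exceptional root systems, so that the framework of Theorem~\ref{thmG} captures all admissible ghost polynomials and no others. This completes the classification.
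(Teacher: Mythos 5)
Your proposal follows essentially the same route as the paper: the paper's entire proof is the one-line citation of Theorems~\ref{thm1}, \ref{thm4} and \ref{thmG}, which is exactly the combination you build on. Your extra scaffolding --- the symplectic justification for why $\GSp_{2l}$ of type $\Pi(C_{l})$ enters, the matching of the zeta factorization against the analytic invariants in Theorem~\ref{thmG}, and the exhaustion argument for ``only'' --- goes beyond the paper's own argument, which never addresses either point and in particular leaves the uniqueness claim (the step you rightly flag as the hard part) entirely implicit.
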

\begin{proof}
	The proof follows from theorems; \ref{thm1}, \ref{thm4} \cite{ref33} and \ref{thmG} \cite{ref99}. 
\end{proof}

\begin{thm}\cite{ref99}
	Every fermionic ghost polynomial, $\GO_{2l+1}$, $\GSp_{2l}$ and $\GO_{2l}$ of type $\Pi (B_{l})$, $\Pi (C_{l})$ and $\Pi( D_{l})$ is a friendly ghosts.
\end{thm}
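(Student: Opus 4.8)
The plan is to reduce the assertion to Theorem~\ref{thmG}, whose conclusion already records the complete analytic profile of the relevant zeta functions, and to supply the structural identification of the fermionic sector through Theorems~\ref{thm1} and~\ref{thm4}. First I would use Theorems~\ref{thm1} and~\ref{thm4} to recall that the fermionic spin Lie groups $\Spin_{\BZ_{p}}(\frac{1}{2})$, and more generally $\spin(\frac{2n-1}{2})$, are governed by the Dynkin root systems $\Pi(B_{l})$, $\Pi(C_{l})$ and $\Pi(D_{l})$ attached to the classical groups $\SO(2l+1)$, $\Sp(2l)$ and $\SO(2l)$. Consequently the only ghost polynomials that can arise in the fermionic sector are those of $\GO_{2l+1}$, $\GSp_{2l}$ and $\GO_{2l}^{+}$, which is precisely the list appearing in the statement, and it suffices to establish the friendly-ghost property for each of these three families.

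Next I would invoke Theorem~\ref{thmG} for each of these groups. It provides, in one stroke, the abscissa of convergence $a_{l}+1$ and a natural boundary of $\BZ_{G}(s)$ at $\Re(s)=\beta$, with $\beta$ computed explicitly in cases (i)--(iii). The notion of a \emph{friendly} ghost asks precisely that this natural boundary be the accumulation line of the candidate (ghost) poles coming from the local Euler factors of $\BZ_{G}(s)$; so the proof consists in checking, case by case, that the value of $\beta$ recorded in Theorem~\ref{thmG} is this accumulation abscissa.

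The key bookkeeping is therefore the matching of $\beta$ against $a_{l}$. For $\GO_{2l+1}$ one has $\beta=l^{2}-1=a_{l}-1$, for $\GSp_{2l}$ one has $\beta=\tfrac{l(l+1)}{2}-2=\tfrac{a_{l}-2}{2}+1$, and for $\GO_{2l}^{+}$ one has $\beta=\tfrac{l(l-1)}{2}-2=\tfrac{a_{l}-2}{2}$. In each case $\beta$ is expressed through $a_{l}$ in exactly the shape produced by the ghost factorisation of $\BZ_{G}(s)$ into translated Riemann factors (of the type exhibited explicitly for $\spin_{\BZ_{p}}(\frac{1}{2})$ earlier in this section), so the identification of the natural boundary with the ghost line is immediate once that factorisation is in hand.

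I expect the genuine obstacle to be confirming that the natural boundary detected in Theorem~\ref{thmG} really originates from the ghost poles and not from an accidental cancellation, that is, that the ghost zeta function faithfully approximates $\BZ_{G}(s)$ as $\Re(s)\downarrow\beta$. For the classical groups of type $B_{l}$, $C_{l}$ and $D_{l}$ this faithfulness is exactly what is proved in~\cite{ref99}; the friendliness of the fermionic ghost is thus inherited directly from that reference, and the three displayed equalities above then complete the argument.
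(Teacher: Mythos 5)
Your proposal follows essentially the same route as the paper: the paper states this theorem as a result imported from \cite{ref99} with no independent proof, relying on Theorems~\ref{thm1}, \ref{thm4} and~\ref{thmG} only for the identification of which ghost polynomials occur (exactly as in the proof of the preceding theorem), and your argument likewise defers the substantive friendliness claim — that the natural boundary at $\Re(s)=\beta$ genuinely comes from the ghost poles — to \cite{ref99}. The case-by-case matching of $\beta$ against $a_{l}$ that you add is bookkeeping already contained in the statement of Theorem~\ref{thmG}, so nothing beyond the citation is really being proved, which is also the paper's position.
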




\section{Some $\mathcal{R}(p,q)-$polynomials and its applications}\label{Poly}
We will examine numerous polynomials in this section, along with their $\mathcal{R}(\rho,q)-$analogues, including the Volkenborn, Bernoulli, and Genocchi polynomials, before extending our extension to the $p-$adic integrals \cite{ref2}.
\subsection{$\mathcal{R}(\rho,q)-$Volkenborn}
Let $\rho, q\in\BC$ such that $0<|q|<|\rho|\leq 1$, or  ${p}$-adic numbers $\rho,q \in \BC_{p}$ such that 
\begin{eqnarray}
	|\rho-1 |_{{p}}<{p}^{-\frac{1}{{p}-1}} \quad \mbox{with}\quad \rho^{x}=\exp (x \log \rho), \\
	\nonumber
	|q-1 |_{{p}}<{p}^{-\frac{1}{{p}-1}} \quad \mbox{with}\quad q^{x}=\exp (x \log q),
\end{eqnarray}
the ${p}-$adic absolute value $|.|_{{p}}$, on $\BC_{{p}}$ normalized by $|{p}|_{{p}}=1 /{p}$, where $|x|_{{p}}\leq 1.$ 
For a locally constant function $f$, a ${p}-$adic distribution $\mu$ on $X$ is defined as $f: X \longrightarrow \mathbb{Q}_{{p}}.$
When $(a+ dp^{N}\BZ_{p})\subset \BZ_{p}$ extends to a ${p}-$adic distribution on $X$, then the ${p}-$adic distribution $\mu_{\mathcal{R}(\rho,q)}$ can be extended to a ${p}-$adic distribution on $X$.
Let $0<\bigg|\dfrac{q}{\rho}\bigg|<1$ and \\$D=\left\lbrace \dfrac{q}{\rho}\in\BC_{{p}}\bigg||q-\rho|<\rho p^{-\dfrac{1}{p-1}}  \right\rbrace$, $\bar{D}=\BC_{p}\setminus D$ be the complement of the open disc around $\rho$. \\ 
$\mu_{\mathcal{R}(\rho,q)}(a+ p^{N}\BZ_{p})$ is the measure if for $\dfrac{q}{\rho}\in \bar{D}, \quad \mbox{ord}_{p}(\rho-q)\neq -\infty$.

We say that $f$ is a uniformly differentiable function at a point $a\in \BZ$, which we put as $f\in UD(\BZ_{p})$, if the quotient
$$F_{f(x,y)}=\dfrac{f(x)-f(y)}{x-y}$$ have a limit as $f^{\prime}(a)$ as $(x,y)\rightarrow (a,a).$ 

\begin{thm}
	For $N \leq 1$ , ${p}-$adic distribution $\mu_{\mathcal{R}(\rho,q)}$ on $X$ is defined by 
	\begin{align}\label{6.1}
		\mu_{\mathcal{R}(\rho,q)}(a+ p^{N}\BZ_{p})= \dfrac{\rho^{{p}^{N}}(\rho^{\rho}-q^{Q})}{[{p}^{N}]_{\mathcal{R}(\rho,q)}}\dfrac{\mathcal{R^{-\prime}}(\rho^{\rho},q^{Q})}{(\rho^{n},q^{n})}\left(\dfrac{\rho}{q} \right)^{a}. 
	\end{align}
\end{thm}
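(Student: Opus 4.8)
The plan is to verify that the assignment \eqref{6.1} is coherent in the sense required of a $p$-adic distribution on $X$, i.e. that it satisfies the compatibility relation
\begin{align*}
	\sum_{i=0}^{p-1}\mu_{\mathcal{R}(\rho,q)}\big(a+ip^{N}+p^{N+1}\BZ_{p}\big)=\mu_{\mathcal{R}(\rho,q)}\big(a+p^{N}\BZ_{p}\big),
\end{align*}
for every admissible residue $a$ and every level $N$. This is the exact deformation of the computation already carried out for the Kim $q$-distribution $\mu_{q}$ in Section~\ref{prelim}, where the same relation is reduced to the normalisation \eqref{9.0} together with the bracket factorisation $[dp^{N+1}]=[dp^{N}][p:q^{dp^{N}}]$. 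Once this single coherence identity is in place, extension to a genuine $p$-adic distribution on $X=\varprojlim_{N}\BZ/dp^{N}$ is automatic, since the sets $a+p^{N}\BZ_{p}$ form a basis of compact open neighbourhoods and the relation above is precisely the compatibility condition for the projective system $\{\BZ_{p}/p^{N}\BZ_{p}\}_{N}$.

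First I would insert the closed form \eqref{6.1} into the left-hand side, with $a$ replaced by $a+ip^{N}$ and $N$ replaced by $N+1$, and then pull out of the sum every factor independent of the summation index $i$. The prefactor $\rho^{p^{N}}(\rho^{P}-q^{Q})\,\mathcal{R^{-\prime}}(\rho^{P},q^{Q})$ and the reciprocal bracket are constant in $i$, so this step isolates the two arithmetic quantities that must be controlled: the $\mathcal{R}(\rho,q)$-bracket $[p^{N+1}]_{\mathcal{R}(\rho,q)}=\mathcal{R}(\rho^{p^{N+1}},q^{p^{N+1}})$ in the denominator, and the finite geometric-type sum $\sum_{i=0}^{p-1}(\rho/q)^{ip^{N}}$ produced by the factor $(\rho/q)^{a}$.

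The crux of the argument is then to establish the two deformed identities that close the sum. The first is the $\mathcal{R}(\rho,q)$-analogue of the classical factorisation, namely a relation expressing $[p^{N+1}]_{\mathcal{R}(\rho,q)}$ as $[p^{N}]_{\mathcal{R}(\rho,q)}$ times a single $\mathcal{R}$-factor based at $(\rho^{p^{N}},q^{p^{N}})$; the second is the $\mathcal{R}(\rho,q)$-analogue of the normalisation \eqref{9.0}, stating that $\sum_{i=0}^{p-1}(\rho/q)^{ip^{N}}$ divided by that same $\mathcal{R}$-factor equals $1$. Feeding these two facts back into the factored expression telescopes the left-hand side down to the single term $\mu_{\mathcal{R}(\rho,q)}(a+p^{N}\BZ_{p})$, and in doing so pins down exactly the normalising prefactor $\rho^{p^{N}}(\rho^{P}-q^{Q})\,\mathcal{R^{-\prime}}(\rho^{P},q^{Q})/(\rho^{n},q^{n})$ displayed in \eqref{6.1}.

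The main obstacle will be proving the bracket factorisation for a \emph{general} meromorphic $\mathcal{R}$. Unlike the ordinary bracket $[n]_{q}=(1-q^{n})/(1-q)$, the deformed number $[n]_{\mathcal{R}(\rho,q)}=\mathcal{R}(\rho^{n},q^{n})$ carries no built-in telescoping or multiplicative structure, so the required factorisation cannot be obtained termwise and must instead be extracted from the operator definition \eqref{r5} of $\partial_{\mathcal{R}(\rho,q)}$ together with the normalisation $\mathcal{R}(1,1)=0$ and the positivity $\mathcal{R}(\rho^{n},q^{n})>0$. I would therefore first verify both identities on the reference model $\mathcal{R}(\rho,q)=1$, where they collapse to the Jagannathan--Srinivasa statements and, in the limit $\rho,q\to 1$, to the classical $q$-identities already used in Section~\ref{prelim}, and then transport them to arbitrary $\mathcal{R}$ through the operator intertwining between $\partial_{p,q}$ and $\partial_{\mathcal{R}(\rho,q)}$ recorded in \eqref{r5}, keeping the operator exponents $\rho^{P},q^{Q}$ and the inverse symbol $\mathcal{R^{-\prime}}$ consistent throughout. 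Verifying $p$-adic convergence of the resulting series under the hypotheses $|\rho-1|_{p},\,|q-1|_{p}<p^{-1/(p-1)}$ is the remaining routine point.
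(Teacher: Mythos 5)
Your overall skeleton is the same as the paper's proof: the paper also establishes \eqref{6.1} by checking the compatibility relation $\sum_{i=0}^{p-1}\mu_{\mathcal{R}(\rho,q)}(a+ip^{N}+p^{N+1}\BZ_{p})=\mu_{\mathcal{R}(\rho,q)}(a+p^{N}\BZ_{p})$, pulling the $i$-independent prefactors out of the sum, and then dealing with the bracket $[p^{N+1}]_{\mathcal{R}(\rho,q)}$ and the sum $\sum_{i=0}^{p-1}(\rho/q)^{ip^{N}}$. Up to that point your plan agrees with the paper.

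The gap is in what you yourself call the crux. You want two standalone lemmas modelled on the Kim $q$-case: a factorisation $[p^{N+1}]_{\mathcal{R}(\rho,q)}=[p^{N}]_{\mathcal{R}(\rho,q)}\cdot C_{N}$ (the analogue of $[dp^{N+1}]=[dp^{N}][p:q^{dp^{N}}]$) together with a normalisation matching $C_{N}$ to the geometric sum (the analogue of \eqref{9.0}), and you propose to obtain the factorisation for general $\mathcal{R}$ by transporting the trivial case through the operator relation \eqref{r5}. That transport cannot work. The relation \eqref{r5} is an operator identity intertwining $\partial_{\mathcal{R}(p,q)}$ with $\partial_{p,q}$; on monomials it expresses $[n]_{\mathcal{R}(\rho,q)}$ in terms of $[n]_{\rho,q}$ \emph{for each fixed $n$ separately}, but it gives no relation whatsoever between $\mathcal{R}(\rho^{p^{N+1}},q^{p^{N+1}})$ and $\mathcal{R}(\rho^{p^{N}},q^{p^{N}})$, and the desired factorisation is in fact false for general admissible $\mathcal{R}$: for $\mathcal{R}(x,y)=\bigl((x-y)/(\rho-q)\bigr)^{2}$ the ratio $[p^{N+1}]_{\mathcal{R}}/[p^{N}]_{\mathcal{R}}$ is the \emph{square} of the geometric-sum-type factor, so no choice of $C_{N}$ can satisfy both of your lemmas. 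An identity that fails numerically cannot be recovered by operator conjugation. The paper's computation never needs such a lemma: it rewrites $1/[p^{N+1}]_{\mathcal{R}(\rho,q)}$ through the plain $(\rho,q)$-bracket as $(\rho-q)/(\rho^{p^{N+1}}-q^{p^{N+1}})$, while all the $\mathcal{R}$-dependence is confined to the factor $(\rho^{\rho}-q^{Q})\,\mathcal{R^{-\prime}}(\rho^{\rho},q^{Q})/(\rho^{n},q^{n})$, which is the \emph{same} expression at level $N+1$ and at level $N$ and therefore passes through the identity untouched; then the geometric sum $\sum_{i=0}^{p-1}(\rho/q)^{ip^{N}}=\bigl(1-(\rho/q)^{p^{N+1}}\bigr)/\bigl(1-(\rho/q)^{p^{N}}\bigr)$ converts the level-$(N+1)$ denominator $\rho^{p^{N+1}}-q^{p^{N+1}}$ into the level-$N$ one, with the power prefactor $\rho^{p^{N+1}}$ absorbed in the same step. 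In short, the paper's cancellation happens entirely at the level of explicit rational expressions in $\rho$ and $q$, exactly as in the $q$-case of Section~\ref{prelim}, and uses no multiplicative property of $\mathcal{R}$; your route promotes such a property to the central lemma, and that lemma is unavailable.
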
 \label{4.1}
\begin{proof}
\begin{align*}
	\sum_{i=0}^{p-1}\mu_{\mathcal{R}(\rho,q)}(a+ip\BZ_{p}+ p^{N+1}\BZ_{p})&=\sum_{i=0}^{p-1}\dfrac{\rho^{{p}^{N+1}}}{[{p}^{N+1}]_{\mathcal{R}(\rho,q)}}\left(\dfrac{\rho}{q} \right)^{a+ ip^{N}}\\
	\nonumber
	&=\dfrac{(\rho-q)(\rho^{\rho}-q^{Q})}{\rho^{{p}^{N+1}}-q^{{p}^{N+1}}}\left(\dfrac{\rho}{q}\right)^{a}(\rho)^{{p}^{N+1}}\\
	\nonumber
	& \times \dfrac{\mathcal{R^{-\prime}}(\rho^{\rho},q^{Q})}{(\rho^{n},q^{n})}\sum_{i=0}^{p-1}\left(\dfrac{\rho}{q} \right)^{ip^{N}}\\
	\nonumber
	&=\dfrac{(\rho-q)(\rho^{\rho}-q^{Q})}{\rho^{{p}^{N+1}}-q^{{p}^{N+1}}}\left(\dfrac{\rho}{q}\right)^{a}(\rho)^{{p}^{N+1}}\\
	\nonumber
	&\times\dfrac{\mathcal{R^{-\prime}}(\rho^{\rho},q^{Q})}{(\rho^{n},q^{n})}\left(\dfrac{1-\left(\dfrac{\rho}{q}\right)^{p^{N+1}}}{1-\left(\dfrac{\rho}{q}\right)^{p^{N}}}\right)\\
	\nonumber
	&=\dfrac{(\rho-q)(\rho^{\rho}-q^{Q})}{\rho^{{p}^{N+1}}-q^{{p}^{N+1}}}\left(\dfrac{\rho}{q}\right)^{a}(\rho)^{{p}^{N+1}}\\
	&\times\dfrac{\mathcal{R^{-\prime}}(\rho^{\rho},q^{Q})}{(\rho^{n},q^{n})}\left(\dfrac{\rho^{{p}^{N+1}}-q^{{p}^{N+1}}}{\rho^{{p}^{N}}-q^{{p}^{N}}}\right)\dfrac{\rho^{{p}^{N}}}{\rho^{{P}^{N+1}}}
	\\
	\nonumber
	&=\dfrac{\rho^{{p}^{N}}(\rho^{\rho}-q^{Q})}{[{p}^{N}]_{\mathcal{R}(\rho,q)}}\dfrac{\mathcal{R^{-\prime}}(\rho^{\rho},q^{Q})}{(\rho^{n},q^{n})}\left(\dfrac{\rho}{q} \right)^{a}\\
	\nonumber
	&	=\mu_{\mathcal{R}(\rho,q)}(a+ {p}^{N}\BZ_{p}),
\end{align*}
this yields the desired Theorem \ref{4.1}.
\end{proof}

The $\mathcal{R}(\rho,q)-$ analogue Riemann sums for $f$ can be written as:
\begin{align}
	\sum_{0\leq a < p^{N}} f(a)	\mu_{\mathcal{R}(\rho,q)}(a+ p^{N}\BZ_{p})&=\dfrac{\rho^{{p}^{N}}(\rho^{\rho}-q^{Q})}{[{p}^{N}]_{\mathcal{R}(\rho,q)}}\dfrac{\mathcal{R^{-\prime}}(\rho^{\rho},q^{Q})}{(\rho^{n},q^{n})}\\
	\nonumber
	&\times \sum_{0\leq a < p^{N}}\left(\dfrac{\rho}{q} \right)^{a} f(a). 
\end{align}

The $\mathcal{R}(\rho,q)$-Volkenborn integral of a function $f\in UD(\BZ_{p})$ is defined by:
\begin{align}
	I_{\mathcal{R}(\rho,q)}(f)&=\int_{\BZ_{p}}f(x)d \mu_{\mathcal{R}(\rho,q)} (x)\\
	\nonumber
	&=\lim_{N\rightarrow \infty} \dfrac{\rho^{{p}^{N}}(\rho^{\rho}-q^{Q})}{[{p}^{N}]_{\mathcal{R}(\rho,q)}}\dfrac{\mathcal{R^{-\prime}}(\rho^{\rho},q^{Q})}{(\rho^{n},q^{n})}\sum_{x
		=0}\left(\dfrac{\rho}{q} \right)^{x} f(x)
\end{align}\label{6.8}
whose limit is convergent. We observe that $$\int_{\BZ_{p}} f_{n}(x)d\mu_{\mathcal{R}(\rho,q)}(x)\rightarrow \int_{\BZ_{p}} f(x)d\mu_{\mathcal{R}(\rho,q)}(x)$$
as $f_{n}\rightarrow f $ in $UD(\BZ_{p})$.

\begin{thm}
	For $f\in UD(\BZ_{p})$, we have 
	\begin{align}
		q^{n} I_{\mathcal{R}(\rho,q)}(f_{n})- \rho^{n} I_{\mathcal{R}(\rho,q)}&=\rho^{n}\dfrac{(\rho-q)}{\rho^{n}-q^{n}}\dfrac{(\rho^{\rho}-q^{Q})}{\mathcal{R}(\rho^{\rho},q^{Q})}\\
		\nonumber
		&\times \sum_{a=0}^{n-1}\left(\dfrac{q}{\rho} \right)^{a}\left(\dfrac{f^{\prime}(a)}{\ln q-\ln \rho}+f(a)\right)
	\end{align}
	where $f_{n}(x)=f(x+n)$ and for $n=1$ we obtain;
	\begin{align}
		q I_{\mathcal{R}(\rho,q)}(f_{1})- \rho I_{\mathcal{R}(\rho,q)}=\dfrac{\rho(\rho^{\rho}-q^{Q})}{\mathcal{R}(\rho^{\rho},q^{Q})}\left(\dfrac{f^{\prime}(0)}{\ln q-\ln \rho}+f(0)\right)
	\end{align} 
	where $f_{1}(x)=f(x+1)$.
	
\end{thm}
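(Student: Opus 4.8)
The plan is to adapt Kim's derivation of the translation formula for the ordinary $q$-Volkenborn integral to the present $\mathcal{R}(\rho,q)$-setting, working throughout from the explicit Riemann-sum representation of $I_{\mathcal{R}(\rho,q)}$. First I would record both integrals as limits,
\begin{align*}
	I_{\mathcal{R}(\rho,q)}(f_{n}) = \lim_{N\to\infty} C_{N}\sum_{x=0}^{p^{N}-1}\left(\dfrac{\rho}{q}\right)^{x} f(x+n), \qquad I_{\mathcal{R}(\rho,q)}(f) = \lim_{N\to\infty} C_{N}\sum_{x=0}^{p^{N}-1}\left(\dfrac{\rho}{q}\right)^{x} f(x),
\end{align*}
where $C_{N}$ is the common prefactor appearing in the definition of $I_{\mathcal{R}(\rho,q)}$, whose only $N$-dependence sits in $\rho^{p^{N}}$ and in $[p^{N}]_{\mathcal{R}(\rho,q)}=\mathcal{R}(\rho^{p^{N}},q^{p^{N}})$. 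I would then form the combination $q^{n}I_{\mathcal{R}(\rho,q)}(f_{n})-\rho^{n}I_{\mathcal{R}(\rho,q)}(f)$ inside the limit and reindex the first sum by $x\mapsto x+n$. After matching the weights $(\rho/q)^{x}$ and the prefactors, the two sums coincide on the overlapping range and cancel there, leaving only two finite boundary blocks: the lower block indexed by $a=0,\dots,n-1$, and the upper block indexed by $x=p^{N},\dots,p^{N}+n-1$.

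The lower block is $N$-independent and directly contributes the weighted sum $\sum_{a=0}^{n-1}(q/\rho)^{a}f(a)$. The crux is the upper block, which presents the indeterminate form $0/0$: as $N\to\infty$ one has $p^{N}\to 0$ in $\BZ_{p}$, so $[p^{N}]_{\mathcal{R}(\rho,q)}\to 0$ while simultaneously $f(p^{N}+a)-f(a)\to 0$. This is where I would invoke the hypothesis $f\in UD(\BZ_{p})$ to write $f(p^{N}+a)=f(a)+p^{N}f'(a)+o(p^{N})$, together with the $p$-adic exponential expansions $\rho^{p^{N}}=\exp(p^{N}\ln\rho)=1+p^{N}\ln\rho+O(p^{2N})$ and $q^{p^{N}}=\exp(p^{N}\ln q)=1+p^{N}\ln q+O(p^{2N})$, both licit under the standing hypotheses $|\rho-1|_{p},|q-1|_{p}<p^{-1/(p-1)}$. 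The ratio $(\rho^{p^{N}}-q^{p^{N}})/p^{N}\to\ln\rho-\ln q$ then converts the $0/0$ quotient into the two surviving contributions $f(a)$ and $f'(a)/(\ln q-\ln\rho)$; the latter is precisely the two-parameter analogue of the classical factor $1/\log q$. Resolving this limit is the main obstacle, since it requires controlling the interaction of the differentiability quotient with the asymptotics of $[p^{N}]_{\mathcal{R}(\rho,q)}$.

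Collecting the surviving terms, the constant $\frac{\rho^{\rho}-q^{Q}}{\mathcal{R}(\rho^{\rho},q^{Q})}$ descends unchanged from $C_{N}$, while the $n$-fold boundary block together with the $[p^{N}]_{\mathcal{R}(\rho,q)}$-asymptotics produces the factor $\frac{\rho-q}{\rho^{n}-q^{n}}=[n]_{\rho,q}^{-1}$; this yields exactly the stated identity for general $n$. I would finish by verifying the $n=1$ specialisation: there the factor $\frac{\rho-q}{\rho-q}=1$, the boundary sum collapses to its single term $a=0$, and one recovers $qI_{\mathcal{R}(\rho,q)}(f_{1})-\rho I_{\mathcal{R}(\rho,q)}(f)=\frac{\rho(\rho^{\rho}-q^{Q})}{\mathcal{R}(\rho^{\rho},q^{Q})}\big(\tfrac{f'(0)}{\ln q-\ln\rho}+f(0)\big)$, which provides a useful internal consistency check on the coefficient bookkeeping.
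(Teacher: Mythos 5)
Your proposal follows essentially the same route as the paper's proof: reindex the Riemann sums for $q^{n}I_{\mathcal{R}(\rho,q)}(f_{n})$, identify the overlapping range with $I_{\mathcal{R}(\rho,q)}(f)$, and resolve the leftover boundary terms' $0/0$ form using uniform differentiability together with the expansions of $\rho^{p^{N}}$, $q^{p^{N}}$ and the asymptotics of $[p^{N}]_{\mathcal{R}(\rho,q)}$. One small caution on wording: the lower boundary block does not converge on its own (the prefactor containing $1/[p^{N}]_{\mathcal{R}(\rho,q)}$ diverges $p$-adically), so the $f(a)$ terms arise only from the paired differences $f(p^{N}+a)\left(q/\rho\right)^{p^{N}+a}-f(a)\left(q/\rho\right)^{a}$ — which is exactly how your subsequent expansion (and the paper's computation) actually treats them, so the argument is sound.
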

\begin{proof}
	Using Theorem $\ref{4.1}$ we have:
	
	\begin{align*}
		\left(\dfrac{q}{\rho}\right)^{n}I_{\mathcal{R}(\rho,q)} (f_{n})&=\lim_{N\rightarrow \infty} \dfrac{\rho^{{p}^{N}}(\rho^{\rho}-q^{Q})}{[{p}^{N}]_{\mathcal{R}(\rho,q)}}\dfrac{\mathcal{R^{-\prime}}(\rho^{\rho},q^{Q})}{(\rho^{n},q^{n})}\sum_{x=0}^{p^{N}-1}f(x+n)\left(\dfrac{q}{\rho} \right)^{x+n}\\
		&= \lim_{N\rightarrow \infty} \dfrac{\rho^{{p}^{N}}(\rho^{\rho}-q^{Q})}{[{p}^{N}]_{\mathcal{R}(\rho,q)}}\dfrac{\mathcal{R^{-\prime}}(\rho^{\rho},q^{Q})}{(\rho^{n},q^{n})}\left[ f(n)\left(\dfrac{q}{\rho} \right)^{n} \right.\\ &\left.
		+f(n+1)\left(\dfrac{q}{\rho} \right)^{n+1}+\cdots+  f(p^{N}+n-2)\left(\dfrac{q}{\rho} \right)^{p^{N}n-2}\right.\\ &\left. +f(p^{N}+n-1)\left(\dfrac{q}{\rho} \right)^{p^{N}n-1} \right]\\
		&= I_{\mathcal{R}(\rho,q)}(f)+ \lim_{N\rightarrow \infty} \dfrac{\rho^{{p}^{N}}(\rho^{\rho}-q^{Q})}{[{p}^{N}]_{\mathcal{R}(\rho,q)}}\dfrac{\mathcal{R^{-\prime}}(\rho^{\rho},q^{Q})}{(\rho^{n},q^{n})}\\
		\nonumber
		&\times \sum_{a=0}^{p^{N}-1}\left[ f(p^{N}+a)\left(\dfrac{q}{\rho} \right)^{p^{N}+a} 
		+f(a)\left(\dfrac{q}{\rho} \right)^{a}\right]\\
		&=I_{\mathcal{R}(\rho,q)}(f)+ \dfrac{(\rho-q)}{\rho^{n}-q^{n}}\dfrac{(\rho^{\rho}-q^{Q})}{\mathcal{R}(\rho^{\rho},q^{Q})} \sum_{a=0}^{n-1}\left(\dfrac{q}{\rho} \right)^{a}\\
		\nonumber
		&\times\dfrac{f^{\prime}(a)+f(a)\ln\frac{q}{\rho}}{\ln\frac{q}{\rho}}\\
		&=I_{\mathcal{R}(\rho,q)}(f)+ \dfrac{(\rho-q)}{\rho^{n}-q^{n}}\dfrac{(\rho^{\rho}-q^{Q})}{\mathcal{R}(\rho^{\rho},q^{Q})} \sum_{a=0}^{n-1}\left(\dfrac{q}{\rho} \right)^{a}\\
		\nonumber
		&\times\left[\dfrac{f^{\prime}(a)}{\ln q- \ln \rho}+f(a)\right],
	\end{align*}	
	this ends the proof for equation \eqref{6.8}.	
	Next, if we put $n=1$ and $a=1$ we obtain
	
	\begin{align*}
		\left(\dfrac{q}{\rho}\right) I_{\mathcal{R}(\rho,q)} (f_{1})&=I_{\mathcal{R}(\rho,q)}(f)+ \dfrac{(\rho-q)}{\rho^{1}-q^{1}}\dfrac{(\rho^{\rho}-q^{Q})}{\mathcal{R}(\rho^{P},q^{Q})} \sum_{a=0}^{n-1}\left(\dfrac{q}{\rho} \right)^{0}\\
		\nonumber
		&\times\dfrac{f^{\prime}(0)+f(0)\ln\frac{q}{\rho}}{\ln\frac{q}{\rho}}\\
		\nonumber
		&=I_{\mathcal{R}(\rho,q)}(f)+ \dfrac{(\rho^{\rho}-q^{Q})}{\mathcal{R}(\rho^{\rho},q^{Q})} \left[\dfrac{f^{\prime}(0)}{\ln q- \ln \rho}+f(0)\right].	
	\end{align*}
	This completes the proof.
\end{proof}

The $\mathcal{R}(\rho,q)$- Bernoulli polynomials, the Cartitz type can be defined for $a\in \mathbb{Q}$ as follows:

$B_{n;a}(x:\mathcal{R}(\rho,q))$ are defined by the following $\mathcal{R}(\rho,q)$- Volkenborn integral 
$$B_{n;a}(x:\mathcal{R}(\rho,q))=\int_{\BZ_p}\rho^{at}[x+t]_{\mathcal{R}(\rho,q)}^{n}d\mu_{\mathcal{R}(\rho,q)}(t).$$
Setting $\mathcal{R}(\rho,q)=1$, we have the $(\rho,q)-$ Volkenborn calculus
and if we choose $\rho=1$ we obtain equation $\eqref{6.11}$ by \cite{ref52} called the Carlitz's q-Bernoulli polynomials. For $$[x+t]_{\mathcal{R}(\rho,q)}=\rho^{t}[x]_{\mathcal{R}(\rho,q)}+q^{x}[t]_{\mathcal{R}(\mathcal{R}(\rho,q))}.$$
the $\mathcal{R}(\rho,q)$-Bernoulli polynomials is given by:
\begin{eqnarray}\label{6.21}
	B_{n;a}(x:\mathcal{R}(\rho,q))&=&\int_{\BZ_p}\rho^{at}[x+t]_{\mathcal{R}(\rho,q)}^{n}d\mu_{\mathcal{R}(\rho,q)}(t)\\
	\nonumber
	&=&\int_{\BZ_p}\rho^{at}(\rho^{t}[x]_{\mathcal{R}(\rho,q)}+q^{x}[t]_{\mathcal{R}(\rho,q)})^{n}d \mu_{\mathcal{R}(\rho,q)}(t)\\
	\nonumber
	&=&\sum_{r=0}^{n}\left(\begin{array}{c} n \\ r \end{array}\right)[x]^{n-r}_{\mathcal{R}(\rho,q)}q^{rx}\int_{\BZ_{p}} \rho^{(a+n-r)t}[t]^{r}_{\mathcal{R}(\rho,q)}d \mu_{\mathcal{R}(\rho,q)}(t)\\
	\nonumber
	&=&\sum_{r=0}^{n}\left(\begin{array}{c} n \\ r \end{array}\right)[x]^{n-r}_{\mathcal{R}(\rho,q)}q^{rx} B_{k;a+n-r(\mathcal{R}(\rho,q))}	
	\\
	\nonumber
	&=&(q^{x}B_{a}(\mathcal{R}(\rho,q))+[x]_{\mathcal{R}(\rho,q)})^{n}.
\end{eqnarray}
Setting $\mathcal{R}(\rho,q)=1$, we have the $(\rho,q)-$ Volkenborn calculus
and if we choose from equation $\eqref{6.21}$, when $\rho=1$ we obtain $B_{n}(q)=(q^{x}B(q)+[x]_{q})^{n}$. Furthermore, when $\rho=1, \mbox{and} q=1$ we obtain the identity:
$$   B_{n}=(B+x)^{n}=\sum_{r=0}^{n}\left(\begin{array}{c} n \\ r \end{array}\right)B_{r}x^{n-r}.$$

\begin{defn}\label{dfn 4.3}
	Let $n$ be a positive integer. We define the $\mathcal{R}(p,q)$
	\begin{align*}
		\mathcal{A}(x,z:\mathcal{R}(p,q))&=\sum_{n=0}^{\infty}B_{n}(x:\mathcal{R}(p,q))\dfrac{z^{n}}{[n]_{\mathcal{R}(p,q)}!}\\
		\nonumber
		&=\dfrac{z}{e_{\mathcal{R}(p,q)}(z)-1}e_{\mathcal{R}(p,q)}(xz)\quad (|z| <2\pi),
		\\
		\nonumber
		\mathbb{D}(x,z:\mathcal{R}(p,q))&=\sum_{n=0}^{\infty}E_{n}(x:\mathcal{R}(p,q))\dfrac{z^{n}}{[n]_{\mathcal{R}(p,q)}!}\\
		\nonumber
		&=\dfrac{[2]_{\mathcal{R}(p,q)}}{e_{\mathcal{R}(p,q)}(z)+1}e_{\mathcal{R}(p,q)}(xz)\quad (|z|< \pi),\\
		\nonumber
		\mathbb{M}(x,z:\mathcal{R}(p,q))&=\sum_{n=0}^{\infty}G_{n}(x:\mathcal{R}(p,q))\dfrac{z^{n}}{[n]_{\mathcal{R}(p,q)}!}\\
		\nonumber
		&=\dfrac{[2]_{\mathcal{R}(p,q)}z}{e_{\mathcal{R}(p,q)}(z)+1}e_{\mathcal{R}(p,q)}(xz) \quad (|z|< \pi),					
	\end{align*}
	where $B_{n}(x:\mathcal{R}(p,q))$,  $E_{n}(x:\mathcal{R}(p,q))$ and  $G_{n}(x:\mathcal{R}(p,q))$ are the\\ $\mathcal{R}(p,q)-$Bernoulli polynomials, $\mathcal{R}(p,q)-$Euler polynomials and \\$\mathcal{R}(p,q)-$Genocchi polynomials respectively. 
	The case where $x=0$ we obtain 
	$B_{n}(0:\mathcal{R}(p,q))=B_{n}[\mathcal{R}(p,q)$],  $E_{n}(0:\mathcal{R}(p,q))=E_{n}[\mathcal{R}(p,q)$] and  $G_{n}(0:\mathcal{R}(p,q))=G_{n}[\mathcal{R}(p,q)$] which represent $\mathcal{R}(p,q)-$Bernoulli numbers, $\mathcal{R}(p,q)-$Euler numbers and $\mathcal{R}(p,q)-$Genocchi numbers respectively.	
	Also, when we set $\mathcal{R}(p,q)=1$, we obtain the $(p,q)-$analogue in definition \ref{dfn 2.14} respectively.
\end{defn}

\subsection{$p-$adic $\mathcal{R}(\rho,q)-$gamma functions}
Y. Morita's $p-$adic gamma function was used by Hamza Menken and  Özge Çolakoglu \cite{ref100} to consider a $p-$adic analogue of the classical beta function \cite{ref54}. Some fundamental properties of the $p-$adic beta function were discovered, as well as some relationships between the classical beta and $p-$adic beta functions at natural number values. Duran and Acikgoz \cite{ref65} also extended these results to the $(\rho,q)-$gamma and $(\rho,q)-$beta functions. 
For the classical case:
\begin{gather*}
	(n!)_{p}= \prod_{\substack{j<n\\ (p,j)=1}} j
\end{gather*} 
and
\begin{gather*}
	\Gamma_{p}(x)=\lim_{n\rightarrow x}(-1)^{n}\prod_{\substack{j<n\\ (p,j)=1}} j.
\end{gather*}
shall be denoted without loss of generality by:
\begin{gather*}
	(n!)^{p}= \prod_{\substack{j<n\\ (p,j)=1}} j
\end{gather*} 
and
\begin{gather*}
	\Gamma^{p}(x)=\lim_{n\rightarrow x}(-1)^{n}\prod_{\substack{j<n\\ (p,j)=1}} j.
\end{gather*}
for easy notation in the subsequent development.
\bigskip

One can express the $\mathcal{R}(\rho,q)-$gamma function in terms of $p$-adic factorial function $(n!)_{\mathcal{R}(\rho,q)}^{p}$. \\
Now for $n\in\BN$, the $p-$adic $\mathcal{R}(\rho,q)-$factorial function can be written as

\begin{gather}
	(n!)_{\mathcal{R}(\rho,q)}^{p}= \prod_{\substack{j<n\\ (p,j)=1}} [j]_{\mathcal{R}(\rho,q)}.
\end{gather}

\begin{defn}\label{def 6.2}
	Let $\rho$ and $q\in \BC_{p}$ with 
	$$|\rho -1|_{p}<1$$ and  $$|q-1|_{p}<1,$$ 
	$\rho \neq 1, \quad q\neq 1.$ We introduce the $p$-adic $\mathcal{R}(\rho,q)-$factorial function $(x!)^{p}_{\mathcal{R}(\rho,q)}$ as follows
	\begin{gather}
		\Gamma^{p}_{\mathcal{R}(\rho,q)}(x)=\lim_{n\rightarrow x}(-1)^{n}\prod_{\substack{j<n\\ (p,j)=1}} [j]_{\mathcal{R}(\rho,q)}.
	\end{gather}
\end{defn}

\begin{lem}\label{4.5}\cite{ref65}
	For all $x\in \BZ_{p}$ the following results hold;
	\begin{equation*}
		\Gamma^{p}_{\mathcal{R}(\rho,q)}(0)=1, \quad \Gamma^{p}_{\mathcal{R}(\rho,q)}(1)=-1, \quad \mbox{and} \quad |\Gamma^{p}_{\mathcal{R}(\rho,q)}(x)|_{P}=1.
	\end{equation*}
	
\end{lem}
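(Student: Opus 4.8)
The plan is to extract $\Gamma^{p}_{\mathcal{R}(\rho,q)}(0)$ and $\Gamma^{p}_{\mathcal{R}(\rho,q)}(1)$ directly from Definition~\ref{def 6.2}, to prove $|\Gamma^{p}_{\mathcal{R}(\rho,q)}(x)|_{p}=1$ first on the integers by showing that each bracket $[j]_{\mathcal{R}(\rho,q)}$ with $(p,j)=1$ is a $p$-adic unit, and then to transfer the norm identity to arbitrary $x\in\BZ_{p}$ by density and continuity.

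For the two point values, recall that in Definition~\ref{def 6.2} the product ranges over the $j$ with $j<n$ and $(p,j)=1$. When $n=0$ this index set is empty, and when $n=1$ the sole candidate $j=0$ is discarded because $(p,0)=p\neq1$; in both cases one is left with the empty product $1$. Hence $\Gamma^{p}_{\mathcal{R}(\rho,q)}(0)=(-1)^{0}=1$ and $\Gamma^{p}_{\mathcal{R}(\rho,q)}(1)=(-1)^{1}=-1$.

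For the norm identity on $\BN$, the decisive step is the unit property of the brackets. Expanding $[j]_{\mathcal{R}(\rho,q)}$ in its $(\rho,q)$-form $\sum_{i=0}^{j-1}\rho^{\,j-1-i}q^{\,i}$ and invoking the standing hypotheses $|\rho-1|_{p}<1$ and $|q-1|_{p}<1$, each of the $j$ summands satisfies $|\rho^{\,j-1-i}q^{\,i}-1|_{p}<1$, so $|[j]_{\mathcal{R}(\rho,q)}-j|_{p}<1$. When $(p,j)=1$ we have $|j|_{p}=1$, and the ultrametric inequality then forces $|[j]_{\mathcal{R}(\rho,q)}|_{p}=|j|_{p}=1$. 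Since $\Gamma^{p}_{\mathcal{R}(\rho,q)}(n)=(-1)^{n}(n!)^{p}_{\mathcal{R}(\rho,q)}$ is a sign times a finite product of such units, we obtain $|\Gamma^{p}_{\mathcal{R}(\rho,q)}(n)|_{p}=1$ for every nonnegative integer $n$.

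Finally I would extend from $\BN$ to $\BZ_{p}$: picking integers $n_{k}\to x$ $p$-adically, the limit in Definition~\ref{def 6.2} yields $\Gamma^{p}_{\mathcal{R}(\rho,q)}(n_{k})\to\Gamma^{p}_{\mathcal{R}(\rho,q)}(x)$, and since $z\mapsto|z|_{p}$ is continuous (indeed $\big||a|_{p}-|b|_{p}\big|\le|a-b|_{p}$) the constant values $|\Gamma^{p}_{\mathcal{R}(\rho,q)}(n_{k})|_{p}=1$ force $|\Gamma^{p}_{\mathcal{R}(\rho,q)}(x)|_{p}=1$. The main obstacle is precisely the analytic input making this last step legitimate, namely that $n\mapsto(-1)^{n}(n!)^{p}_{\mathcal{R}(\rho,q)}$ is uniformly continuous for the $p$-adic metric so that the defining limit exists on all of $\BZ_{p}$; this is the $\mathcal{R}(\rho,q)$-analogue of the classical congruence $\Gamma_{p}(m)\equiv\Gamma_{p}(n)\pmod{p^{N}}$ for $m\equiv n\pmod{p^{N}}$, and it relies on the same unit estimate together with control of the ratio of consecutive $p$-adic factorials.
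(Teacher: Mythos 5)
The paper gives no proof of this lemma at all: it is imported verbatim from Duran--Acikgoz \cite{ref65}, so there is no internal argument to compare yours against and your proposal has to stand on its own. Your treatment of the two point values is correct: for $n=0$ and $n=1$ the index set $\{j<n,\ (p,j)=1\}$ is empty, so the product collapses to $1$ and the sign $(-1)^{n}$ gives $1$ and $-1$ respectively. Your extension step from $\BN$ to $\BZ_{p}$ is also sound as far as it goes: if $|\Gamma^{p}_{\mathcal{R}(\rho,q)}(n_{k})|_{p}=1$ and $\Gamma^{p}_{\mathcal{R}(\rho,q)}(n_{k})\to\Gamma^{p}_{\mathcal{R}(\rho,q)}(x)$, the ultrametric equality $|z|_{p}=\max\{|z_{k}|_{p},|z-z_{k}|_{p}\}$ for large $k$ forces the limit to have norm $1$. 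You are right that the real content hidden here is the uniform continuity of $n\mapsto(-1)^{n}\prod_{j<n,\,(p,j)=1}[j]_{\mathcal{R}(\rho,q)}$, which you flag but do not prove; since Definition \ref{def 6.2} already presupposes existence of the limit, that omission is arguably inherited from the paper rather than created by you, but it is the one analytic statement a complete proof must supply (the analogue of the classical congruence for Morita's $\Gamma_{p}$ on residue classes mod $p^{N}$).

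The genuine gap is in the unit-norm step. You expand $[j]_{\mathcal{R}(\rho,q)}$ as $\sum_{i=0}^{j-1}\rho^{\,j-1-i}q^{\,i}$, but in this paper's framework $[j]_{\mathcal{R}(\rho,q)}:=\mathcal{R}(\rho^{j},q^{j})$ for a general meromorphic $\mathcal{R}$, and the expansion you use is valid only for the Jagannathan--Srinivasa realization $\mathcal{R}(u,v)=(u-v)/(\rho-q)$. For that choice your estimate is correct: $|\rho-1|_{p}<1$ and $|q-1|_{p}<1$ put $\rho,q$ in the group $1+\mathfrak{m}$, so every monomial $\rho^{a}q^{b}$ lies in $1+\mathfrak{m}$, whence $|[j]_{\rho,q}-j|_{p}<1$ and $|[j]_{\rho,q}|_{p}=|j|_{p}=1$ when $(p,j)=1$. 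But for a general $\mathcal{R}$ the claim is simply false: for instance $\mathcal{R}(u,v)=p\,(u-v)/(\rho-q)$ satisfies the standing structural hypotheses on $\mathcal{R}$ yet gives $|[j]_{\mathcal{R}(\rho,q)}|_{p}=p^{-1}$, so the product, and hence $|\Gamma^{p}_{\mathcal{R}(\rho,q)}(x)|_{p}$, is not $1$. So your argument proves the lemma only in the $(\rho,q)$-case actually treated in \cite{ref65}; at the level of generality the present paper's notation suggests, the statement needs the additional hypothesis $|\mathcal{R}(\rho^{j},q^{j})|_{p}=1$ for all $j$ prime to $p$ (or an explicit restriction on $\mathcal{R}$), and a correct proof must state that assumption rather than smuggle it in through the $(\rho,q)$-expansion.
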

\begin{rem}\label{4.6}
	From the $\mathcal{R}(\rho,q)-$numbers, we have the product rule:	
	$$[kp]_{\mathcal{R}(\rho,q)}=[k]_{\mathcal{R}(\rho^{p},q^{p})} [p]_{\mathcal{R}(\rho^{p},q^{p})}.$$
\end{rem}

\begin{thm}\cite{ref65}
	The following recurrence formula holds true for all $z\in \BZ_{p}$:
	
	\begin{gather}
		\Gamma_{\mathcal{R}(\rho,q)}^{p}(z+1)= \delta^{p}_{\mathcal{R}(\rho,q)} [z]\Gamma_{\mathcal{R}(\rho,q)}^{p}(z)
	\end{gather}
	where
	\begin{eqnarray*}
		\delta^{p}_{\mathcal{R}(\rho,q)} [z]= \left\{\begin{array}{lr} -[z]_{\mathcal{R}(\rho,q)} \quad \mbox{if   } \quad |z|_{p}=0, \quad \\
			-1 \quad \quad \quad \quad \mbox{if }   \quad |z|_{p}<
			1. \quad \end{array} \right.
	\end{eqnarray*}
	
\end{thm}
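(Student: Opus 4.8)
The plan is to prove the recurrence first on the positive integers, where $\Gamma^{p}_{\mathcal{R}(\rho,q)}$ is literally the finite product $(-1)^{n}\prod_{j<n,\,(p,j)=1}[j]_{\mathcal{R}(\rho,q)}$, and then to promote it to all of $\BZ_{p}$ by density and continuity. For a positive integer $n$ I would write out the quotient
\[
\frac{\Gamma^{p}_{\mathcal{R}(\rho,q)}(n+1)}{\Gamma^{p}_{\mathcal{R}(\rho,q)}(n)}
=-\,\frac{\prod_{j\le n,\,(p,j)=1}[j]_{\mathcal{R}(\rho,q)}}{\prod_{j<n,\,(p,j)=1}[j]_{\mathcal{R}(\rho,q)}},
\]
and observe that the numerator differs from the denominator by exactly the single factor indexed by $j=n$, which is present iff $(p,n)=1$. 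Hence the ratio equals $-[n]_{\mathcal{R}(\rho,q)}$ when $p\nmid n$ (the unit case $|n|_{p}=1$) and $-1$ when $p\mid n$, i.e. $\Gamma^{p}_{\mathcal{R}(\rho,q)}(n+1)=\delta^{p}_{\mathcal{R}(\rho,q)}[n]\,\Gamma^{p}_{\mathcal{R}(\rho,q)}(n)$ for every $n\in\BN$. This is the entire arithmetic content of the statement.

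To pass from $\BN$ to $\BZ_{p}$ I would use that the positive integers are dense in $\BZ_{p}$ and that both sides of the recurrence are continuous in $z$. Continuity of $z\mapsto\Gamma^{p}_{\mathcal{R}(\rho,q)}(z)$ is precisely what makes Definition~\ref{def 6.2} meaningful (the defining limit exists) and is reflected in Lemma~\ref{4.5}, where $|\Gamma^{p}_{\mathcal{R}(\rho,q)}(x)|_{p}=1$; continuity of $z\mapsto[z]_{\mathcal{R}(\rho,q)}=\mathcal{R}(\rho^{z},q^{z})$ follows from the hypotheses $|\rho-1|_{p}<1$ and $|q-1|_{p}<1$, since for the odd prime $p$ these force $|\rho-1|_{p},|q-1|_{p}\le p^{-1}<p^{-1/(p-1)}$, so that $\rho^{z}=\exp(z\log\rho)$ and $q^{z}=\exp(z\log q)$ are well defined and depend continuously (indeed analytically) on $z$. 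The crucial point is that $\BZ_{p}^{*}$ and $p\BZ_{p}$ are disjoint clopen sets, so a sequence of integers $n\to z$ eventually satisfies $n\equiv z\pmod p$, whence $|n|_{p}=|z|_{p}$; the correct branch of $\delta^{p}_{\mathcal{R}(\rho,q)}$ is therefore preserved under the limit.

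Concretely, for $z\in\BZ_{p}^{*}$ I would choose positive integers $n\to z$ with $p\nmid n$ and pass to the limit in $\Gamma^{p}_{\mathcal{R}(\rho,q)}(n+1)=-[n]_{\mathcal{R}(\rho,q)}\,\Gamma^{p}_{\mathcal{R}(\rho,q)}(n)$, using continuity of $\Gamma^{p}_{\mathcal{R}(\rho,q)}$ and of $[\,\cdot\,]_{\mathcal{R}(\rho,q)}$ to obtain $\Gamma^{p}_{\mathcal{R}(\rho,q)}(z+1)=-[z]_{\mathcal{R}(\rho,q)}\,\Gamma^{p}_{\mathcal{R}(\rho,q)}(z)$; for $z\in p\BZ_{p}$ I would instead take $n\to z$ with $p\mid n$ and pass to the limit in $\Gamma^{p}_{\mathcal{R}(\rho,q)}(n+1)=-\Gamma^{p}_{\mathcal{R}(\rho,q)}(n)$. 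Together these two clopen cases exhaust $\BZ_{p}$ and reproduce the piecewise definition of $\delta^{p}_{\mathcal{R}(\rho,q)}[z]$.

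I expect the \emph{main obstacle} to be the analytic input rather than the combinatorics: one must guarantee that the defining limit in Definition~\ref{def 6.2} genuinely exists and is continuous, that is, that the partial products are $p$-adically Cauchy as $n$ runs through a fixed congruence class. This is where the structure of the $\mathcal{R}(\rho,q)$-numbers enters, specifically the factorization $[kp]_{\mathcal{R}(\rho,q)}=[k]_{\mathcal{R}(\rho^{p},q^{p})}[p]_{\mathcal{R}(\rho^{p},q^{p})}$ of Remark~\ref{4.6}, which isolates the factors divisible by $p$ and lets one control the tail of $\prod_{j<n,\,(p,j)=1}[j]_{\mathcal{R}(\rho,q)}$ uniformly. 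Granting the convergence established in \cite{ref65}, the recurrence then follows immediately from the limiting argument above.
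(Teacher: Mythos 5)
The paper offers no proof of this theorem to compare against: it is imported verbatim (with its citation) from \cite{ref65}, and the only argument in the paper's vicinity is the proof of the \emph{next} theorem, which uses this recurrence as an input. So your proposal must stand on its own, and in structure it does: it is the standard Morita-style argument. The integer case is exactly right --- the ratio $\Gamma^{p}_{\mathcal{R}(\rho,q)}(n+1)/\Gamma^{p}_{\mathcal{R}(\rho,q)}(n)$ of the defining finite products acquires the single extra factor $[n]_{\mathcal{R}(\rho,q)}$ precisely when $(p,n)=1$, giving $-[n]_{\mathcal{R}(\rho,q)}$ in the unit case and $-1$ otherwise (you also silently and correctly read the paper's typo ``$|z|_{p}=0$'' as the unit case $|z|_{p}=1$; taken literally, $|z|_p=0$ forces $z=0$ and would contradict $\Gamma^{p}_{\mathcal{R}(\rho,q)}(1)=-1$ from Lemma \ref{4.5}). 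The passage to $\BZ_{p}$ by density of $\BN$, with the observation that $\BZ_{p}^{*}$ and $p\BZ_{p}$ are disjoint clopen sets so that an approximating integer sequence eventually selects the correct branch of $\delta^{p}_{\mathcal{R}(\rho,q)}$, is precisely the mechanism that makes the piecewise formula survive the limit.

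Two caveats, neither fatal. First, your justification of the continuity of $z\mapsto[z]_{\mathcal{R}(\rho,q)}$ overreaches: Definition \ref{def 6.2} takes $\rho,q\in\BC_{p}$, where the value group of $|\cdot|_{p}$ is dense, so $|\rho-1|_{p}<1$ does \emph{not} force $|\rho-1|_{p}\le p^{-1}<p^{-1/(p-1)}$, and the representation $\rho^{z}=\exp(z\log\rho)$ is not automatically available. The conclusion is still true by a more elementary route: for any principal unit, $(1+x)^{p}-1=px+\binom{p}{2}x^{2}+\cdots+x^{p}$ gives $|\rho^{p}-1|_{p}\le\max\bigl(p^{-1}|\rho-1|_{p},\,|\rho-1|_{p}^{p}\bigr)<|\rho-1|_{p}$, and iterating yields $|\rho^{p^{k}}-1|_{p}\to0$, i.e.\ uniform continuity of $n\mapsto\rho^{n}$ on $\BZ$ and hence a continuous interpolation to $\BZ_{p}$ --- no $\exp/\log$ needed. (Strictly one also needs $p$-adic continuity of $\mathcal{R}$ itself at the relevant arguments, a point on which the paper is silent as well.) Second, you defer the existence and uniform continuity of the limit defining $\Gamma^{p}_{\mathcal{R}(\rho,q)}$ to \cite{ref65}; that is the genuine analytic content of the statement, and a self-contained proof would have to establish the Cauchy estimate for the partial products along congruence classes, with Remark \ref{4.6} isolating the factors divisible by $p$. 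Since the paper defers the \emph{entire} theorem to the same reference, this deferral leaves you no worse off than the paper, but it should be stated as an explicit hypothesis rather than folded into ``continuity is what makes the definition meaningful.''
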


\begin{thm}
	The following recurrence formula holds true for all $n\in \BN$:
	
	\begin{gather}
		\Gamma_{\mathcal{R}(\rho,q)}^{p}(n+1)= (-1)^{n+1}\dfrac{[n]_{\mathcal{R}(\rho,q)}!}{[p]_{\mathcal{R}(\rho,q)}^{\left \lfloor \dfrac{n}{p}\right \rfloor}\left[\left \lfloor \dfrac{n}{p}\right \rfloor \right]_{\mathcal{R}(\rho^{p},q^{p})}!},
	\end{gather}
	where $\lfloor \cdot \rfloor$ is the greatest integer function.
\end{thm}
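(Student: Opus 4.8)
The plan is to reduce the statement to a factorization of the ordinary $\mathcal{R}(\rho,q)$-factorial $[n]_{\mathcal{R}(\rho,q)}!$ according to divisibility by $p$, and then to clear the multiples of $p$ with the product rule of Remark~\ref{4.6}. First I would evaluate the defining limit of Definition~\ref{def 6.2} at the positive integer $n+1$, where it is attained at the integer itself, so that
\begin{align*}
	\Gamma^{p}_{\mathcal{R}(\rho,q)}(n+1)=(-1)^{n+1}\prod_{\substack{j<n+1\\(p,j)=1}}[j]_{\mathcal{R}(\rho,q)}=(-1)^{n+1}\,((n+1)!)^{p}_{\mathcal{R}(\rho,q)}.
\end{align*}
Thus the whole problem reduces to evaluating the restricted factorial $((n+1)!)^{p}_{\mathcal{R}(\rho,q)}=\prod_{1\le j\le n,\;p\nmid j}[j]_{\mathcal{R}(\rho,q)}$ in closed form.

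Second, I would split the full factorial into its part coprime to $p$ and its part divisible by $p$. Since $[n]_{\mathcal{R}(\rho,q)}!=\prod_{j=1}^{n}[j]_{\mathcal{R}(\rho,q)}$ and the multiples of $p$ not exceeding $n$ are exactly $p,2p,\dots,\lfloor n/p\rfloor p$, this gives
\begin{align*}
	[n]_{\mathcal{R}(\rho,q)}!=\Big(\prod_{\substack{1\le j\le n\\ p\nmid j}}[j]_{\mathcal{R}(\rho,q)}\Big)\Big(\prod_{k=1}^{\lfloor n/p\rfloor}[kp]_{\mathcal{R}(\rho,q)}\Big)=((n+1)!)^{p}_{\mathcal{R}(\rho,q)}\prod_{k=1}^{\lfloor n/p\rfloor}[kp]_{\mathcal{R}(\rho,q)}.
\end{align*}
Next I would apply the product rule of Remark~\ref{4.6} to each factor $[kp]_{\mathcal{R}(\rho,q)}$, peeling off one copy of $[p]_{\mathcal{R}(\rho,q)}$ and leaving $[k]_{\mathcal{R}(\rho^{p},q^{p})}$; the product of the latter over $k=1,\dots,\lfloor n/p\rfloor$ is precisely $[\lfloor n/p\rfloor]_{\mathcal{R}(\rho^{p},q^{p})}!$, so that
\begin{align*}
	\prod_{k=1}^{\lfloor n/p\rfloor}[kp]_{\mathcal{R}(\rho,q)}=[p]_{\mathcal{R}(\rho,q)}^{\lfloor n/p\rfloor}\,[\lfloor n/p\rfloor]_{\mathcal{R}(\rho^{p},q^{p})}!.
\end{align*}
Solving the previous display for $((n+1)!)^{p}_{\mathcal{R}(\rho,q)}$ and substituting into the first display yields the claimed formula.

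As a consistency check, and an alternative route, I would verify the result by induction on $n$ using the recurrence $\Gamma^{p}_{\mathcal{R}(\rho,q)}(z+1)=\delta^{p}_{\mathcal{R}(\rho,q)}[z]\,\Gamma^{p}_{\mathcal{R}(\rho,q)}(z)$ of the preceding theorem together with the base value $\Gamma^{p}_{\mathcal{R}(\rho,q)}(1)=-1$ from Lemma~\ref{4.5}; the two branches $p\nmid(n+1)$ and $p\mid(n+1)$ of $\delta^{p}_{\mathcal{R}(\rho,q)}$ correspond exactly to whether $\lfloor (n+1)/p\rfloor$ equals $\lfloor n/p\rfloor$ or $\lfloor n/p\rfloor+1$. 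The one genuinely delicate point, and the \emph{main obstacle}, is the bookkeeping in the divisible case: there one must recognize $[n+1]_{\mathcal{R}(\rho,q)}=[p]_{\mathcal{R}(\rho,q)}\,[\lfloor(n+1)/p\rfloor]_{\mathcal{R}(\rho^{p},q^{p})}$ when $p\mid(n+1)$, which is again the product rule of Remark~\ref{4.6} and is exactly what makes the exponent of $[p]_{\mathcal{R}(\rho,q)}$ and the rescaled factorial $[\,\cdot\,]_{\mathcal{R}(\rho^{p},q^{p})}!$ advance in step with the floor function. Care with the parity of the sign $(-1)^{n+1}$ across the recurrence is the only other thing to track.
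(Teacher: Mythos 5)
Your proposal is correct and follows essentially the same route as the paper's own proof: starting from Definition~\ref{def 6.2}, expressing the product restricted to indices coprime to $p$ as the full factorial $[n]_{\mathcal{R}(\rho,q)}!$ divided by the product over multiples of $p$, and then applying the product rule of Remark~\ref{4.6} to convert each $[kp]_{\mathcal{R}(\rho,q)}$ into $[p]_{\mathcal{R}(\rho,q)}\,[k]_{\mathcal{R}(\rho^{p},q^{p})}$, which assembles the denominator $[p]_{\mathcal{R}(\rho,q)}^{\lfloor n/p\rfloor}\bigl[\lfloor n/p\rfloor\bigr]_{\mathcal{R}(\rho^{p},q^{p})}!$. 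The additional inductive verification via the recurrence $\Gamma^{p}_{\mathcal{R}(\rho,q)}(z+1)=\delta^{p}_{\mathcal{R}(\rho,q)}[z]\,\Gamma^{p}_{\mathcal{R}(\rho,q)}(z)$ is a sound cross-check but not needed, since the direct factorization argument is already the paper's proof.
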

\begin{proof}
	We observe from definition \ref{def 6.2} that 
	\begin{align}
		\Gamma^{p}_{\mathcal{R}(\rho,q)}(n+1)&=(-1)^{n+1}\prod_{\substack{j<n\\ (p,j)=1}} [j]_{\mathcal{R}(\rho,q)}\\
		\nonumber
		&=(-1)^{n+1}\dfrac{[1]_{\mathcal{R}(\rho,q)}[2]_{\mathcal{R}(\rho,q)} \cdots [n]_{\mathcal{R}(\rho,q)}}{[p]_{\mathcal{R}(\rho,q)}[2p]_{\mathcal{R}(\rho,q)}\cdots \left[\left \lfloor \dfrac{n}{p}\right \rfloor \right]_{\mathcal{R}(\rho,q)}}\\
		\nonumber
		&=(-1)^{n+1}\dfrac{[n]_{\mathcal{R}(\rho,q)}!}{[p]_{\mathcal{R}(\rho,q)}^{\left \lfloor \dfrac{n}{p}\right \rfloor}[1]_{\mathcal{R}(\rho^{p},q^{p})}[2]_{\mathcal{R}(\rho^{p},q^{p})} \cdots\left[\left \lfloor \dfrac{n}{p}\right \rfloor \right]_{\mathcal{R}(\rho^{p},q^{p})}!}	
	\end{align}	
	by the application of remark \ref{4.6} we obtain the proof.	
	
\end{proof}

\begin{lem}
	Let $m_{n}$ be the sum of digits of $n=\sum_{j=0}^{m}a_{j}p^{j}$ ($a_{m}\neq0$) in base $p$. Then 
	
	\begin{align}\label{lem 6.9}
		\left[\left \lfloor \dfrac{n}{p}\right \rfloor \right]_{\mathcal{R}(\rho^{p},q^{p})}!&=(-1)^{n+1-m}\left(-[p]_{\mathcal{R}(\rho^{p},q^{p})}\right)^\frac{{(n-m_{n})}}{(p-1)}\\
		\nonumber
		&\times \prod_{j=0}^{m-1} \dfrac{	\left[\left \lfloor \dfrac{n}{p^{j+1}}\right \rfloor \right]_{\mathcal{R}(\rho^{p},q^{p})}!}{\left[\left \lfloor \dfrac{n}{p^{j}}\right \rfloor \right]_{\mathcal{R}(\rho^{p},q^{p})}!} \prod_{i=0}^{m}\Gamma_{\mathcal{R}(\rho,q)}^{p} \left(\left[\left \lfloor \dfrac{n}{p}\right \rfloor \right]+1 \right) 
	\end{align}
and	
	\begin{align}\label{lem 6.10}
		[n]_{\mathcal{R}(\rho^{p},q^{p})}!&=(-1)^{n+1-m}\left(-[p]_{\mathcal{R}(\rho^{p},q^{p})}\right)^\frac{{(n-m_{n})}}{(p-1)}\left[\left \lfloor \dfrac{n}{p}\right \rfloor \right]_{\mathcal{R}(\rho^{p},q^{p})}! \\
		\nonumber
		&\times \prod_{j=0}^{m-1} \dfrac{	\left[\left \lfloor \dfrac{n}{p^{j+1}}\right \rfloor \right]_{\mathcal{R}(\rho^{p},q^{p})}!}{\left[\left \lfloor \dfrac{n}{p^{j}}\right \rfloor \right]_{\mathcal{R}(\rho^{p},q^{p})}!}\prod_{i=0}^{m}\Gamma_{\mathcal{R}(\rho,q)}^{p} \left(\left[\left \lfloor \dfrac{n}{p^{i}}\right \rfloor \right]+1 \right)		
	\end{align}
\end{lem}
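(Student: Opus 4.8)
The plan is to derive both identities by repeatedly feeding the recurrence of the preceding theorem back into itself, stripping off one base-$p$ digit of $n$ at each stage. Write $n_{i}=\lfloor n/p^{i}\rfloor$, so that $n_{0}=n$, $n_{i+1}=\lfloor n_{i}/p\rfloor$, and the process terminates at $n_{m}=a_{m}\neq 0$, $n_{m+1}=0$. Solving the preceding theorem for the factorial gives the single-step form $[n]_{\mathcal{R}(\rho,q)}!=(-1)^{n+1}[p]_{\mathcal{R}(\rho,q)}^{\lfloor n/p\rfloor}[\lfloor n/p\rfloor]_{\mathcal{R}(\rho^{p},q^{p})}!\,\Gamma_{\mathcal{R}(\rho,q)}^{p}(n+1)$, and the product rule of Remark \ref{4.6}, $[kp]_{\mathcal{R}(\rho,q)}=[k]_{\mathcal{R}(\rho^{p},q^{p})}[p]_{\mathcal{R}(\rho^{p},q^{p})}$, is what lets the deformation base pass from $(\rho,q)$ to $(\rho^{p},q^{p})$ each time a factor of $p$ is extracted. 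These two facts are the only structural inputs.

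First I would apply the single-step form once to reach $[n_{1}]_{\mathcal{R}(\rho^{p},q^{p})}!$, then apply it again at the shifted base $(\rho^{p},q^{p})$ to $[n_{1}]!$, and so on. After $m$ applications the factorial of $n$ has been rewritten as a product of gamma factors $\Gamma_{\mathcal{R}(\rho^{p^{i}},q^{p^{i}})}^{p}(n_{i}+1)$, a string of powers of $[p]$ at the successive bases, and the telescoping ratio $\prod_{j=0}^{m-1}[n_{j+1}]_{\mathcal{R}(\rho^{p},q^{p})}!/[n_{j}]_{\mathcal{R}(\rho^{p},q^{p})}!$ that appears in the statement; this ratio collapses to $[a_{m}]_{\mathcal{R}(\rho^{p},q^{p})}!/[n]_{\mathcal{R}(\rho^{p},q^{p})}!$. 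Equation \eqref{lem 6.9} is then obtained from \eqref{lem 6.10} by clearing the leading factor $[\lfloor n/p\rfloor]_{\mathcal{R}(\rho^{p},q^{p})}!$, so it suffices to establish one of them and read off the other.

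The substance of the argument is the bookkeeping. The exponent of $[p]$ accumulated over the $m$ steps is $\sum_{i=1}^{m}\lfloor n/p^{i}\rfloor$, and the standard digit-sum identity $\sum_{i=1}^{m}\lfloor n/p^{i}\rfloor=\frac{pn-m_{n}}{p-1}-n=\frac{n-m_{n}}{p-1}$ reproduces precisely the exponent attached to $-[p]_{\mathcal{R}(\rho^{p},q^{p})}$ in the statement. The overall sign accumulates as $\prod_{i}(-1)^{n_{i}+1}$, and I would fold the $m$ surplus minus signs into the factor $\left(-[p]\right)^{(n-m_{n})/(p-1)}$ and reduce the remainder to $(-1)^{n+1-m}$ by tracking the parities of the $n_{i}$ against those of the digits $a_{i}$. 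Finally, if the statement is read with every gamma factor at the fixed base $\mathcal{R}(\rho,q)$ rather than at $(\rho^{p^{i}},q^{p^{i}})$, one further pass with Remark \ref{4.6} converts each $\Gamma_{\mathcal{R}(\rho^{p^{i}},q^{p^{i}})}^{p}$ back to the fixed base. The main obstacle is exactly this simultaneous control of the three interlocking ledgers — the $[p]$-exponent, the global sign, and the shifting deformation base — since no single analytic estimate is hard, but an error in any one of them corrupts the final product.
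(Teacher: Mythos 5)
Your proposal follows essentially the same route as the paper's own proof: solve the preceding theorem for the factorial, iterate that single-step recurrence at each level $\left\lfloor n/p^{j}\right\rfloor$ for $j=0,\dots,m$, multiply the chain so the factorials telescope into the ratio $\prod_{j=0}^{m-1}\left[\lfloor n/p^{j+1}\rfloor\right]!/\left[\lfloor n/p^{j}\rfloor\right]!$, then compress the accumulated exponent $\sum_{i\geq 1}\lfloor n/p^{i}\rfloor$ via the digit-sum identity $\frac{n-m_{n}}{p-1}$ and fold the surplus signs into $\left(-[p]\right)^{(n-m_{n})/(p-1)}$, leaving $(-1)^{n+1-m}$. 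The only real difference is that you explicitly track the shifting deformation bases $(\rho^{p^{i}},q^{p^{i}})$ and invoke Remark~\ref{4.6} to return everything to a fixed base — a bookkeeping subtlety the paper's proof silently glosses over by writing every iterate at the base $(\rho,q)$.
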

\begin{proof}
	\begin{align*}
		[n]_{\mathcal{R}(\rho,q)}! &= (-1)^{n+1} [p]_{\mathcal{R}(\rho,q)}^{\left \lfloor \dfrac{n}{p}\right \rfloor}\left[\left \lfloor \dfrac{n}{p}\right \rfloor \right]_{\mathcal{R}(\rho^{p},q^{p})}! 	\Gamma^{p}_{\mathcal{R}(\rho,q)}(n+1)
		\\
		\nonumber
		\left[\left \lfloor \dfrac{n}{p^{0}}\right \rfloor \right]_{\mathcal{R}(\rho,q)}!&= (-1)^{\left \lfloor \dfrac{n}{p^{0}}\right \rfloor+1} [p]_{\mathcal{R}(\rho,q)}^{\left \lfloor \dfrac{n}{p^{1}}\right \rfloor}\left[\left \lfloor \dfrac{n}{p^{2}}\right \rfloor \right]_{\mathcal{R}(\rho^{p},q^{p})}! \\
		\nonumber	
		&\times	\Gamma^{p}_{\mathcal{R}(\rho,q)}\left(\left \lfloor \dfrac{n}{p^{0}}\right \rfloor+1 \right),
		\\
		\nonumber
		\left[\left \lfloor \dfrac{n}{p^{1}}\right \rfloor \right]_{\mathcal{R}(\rho,q)}!&= (-1)^{\left \lfloor \dfrac{n}{p^{1}}\right \rfloor+1} [p]_{\mathcal{R}(\rho,q)}^{\left \lfloor \dfrac{n}{p^{2}}\right \rfloor}\left[\left \lfloor \dfrac{n}{p^{2}}\right \rfloor \right]_{\mathcal{R}(\rho^{p},q^{p})}! \\
		\nonumber	
		&\times \Gamma^{p}_{\mathcal{R}(\rho,q)}\left(\left \lfloor \dfrac{n}{p^{1}}\right \rfloor+1 \right),
		\\
		&\vdots
		\\
		\left[\left \lfloor \dfrac{n}{p^{m}}\right \rfloor \right]_{\mathcal{R}(\rho,q)}!&= (-1)^{\left \lfloor \dfrac{n}{p^{m}}\right \rfloor+1} [p]_{\mathcal{R}(\rho,q)}^{\left \lfloor \dfrac{n}{p^{m+1}}\right \rfloor}\left[\left \lfloor \dfrac{n}{p^{m+1}}\right \rfloor \right]_{\mathcal{R}(\rho^{p},q^{p})}! \\
		\nonumber
		&\times \Gamma^{p}_{\mathcal{R}(\rho,q)}\left(\left \lfloor \dfrac{n}{p^{m}}\right \rfloor+1 \right),
	\end{align*}
	multiplying the inductive process above we obtain:
	\begin{align*}
		\left[\left \lfloor \dfrac{n}{p^{m}}\right \rfloor \right]_{\mathcal{R}(\rho,q)}!&= (-1)^{\left \lfloor \dfrac{n}{p^{0}}\right \rfloor+\left \lfloor \dfrac{n}{p^{1}}\right \rfloor +\cdots + \left \lfloor \dfrac{n}{p^{m}}\right \rfloor +m+1} \\
		&\times
		[p]_{\mathcal{R}(\rho,q)}^{\left \lfloor \dfrac{n}{p^{0}}\right \rfloor +\left \lfloor \dfrac{n}{p^{1}}\right \rfloor +\cdots + \left \lfloor \dfrac{n}{p^{m+1}}\right \rfloor} \\	&\times
		\left[\left \lfloor \dfrac{n}{p^{m+1}}\right \rfloor \right]_{\mathcal{R}(\rho^{p},q^{p})}!
		\prod_{j=0}^{m-1} \dfrac{	\left[\left \lfloor \dfrac{n}{p^{j+1}}\right \rfloor \right]_{\mathcal{R}(\rho^{p},q^{p})}!}{\left[\left \lfloor \dfrac{n}{p^{j}}\right \rfloor \right]_{\mathcal{R}(\rho^{p},q^{p})}!} \\
		&\times\prod_{i=0}^{m} 	\Gamma^{p}_{\mathcal{R}(\rho,q)}\left(\left \lfloor \dfrac{n}{p^{i}}\right \rfloor+1 \right)
	\end{align*}
	by simple computations we arrive at the equation \eqref{lem 6.9}.\\
	Next for the equation \eqref{lem 6.10} 
	
	\begin{align*}
		[n]_{\mathcal{R}(\rho,q)}!&= (-1)^{\left \lfloor \dfrac{n}{p^{0}}\right \rfloor+\left \lfloor \dfrac{n}{p^{1}}\right \rfloor +\cdots + \left \lfloor \dfrac{n}{p^{m}}\right \rfloor +m+1} [p]_{\mathcal{R}(\rho,q)}^{\left \lfloor \dfrac{n}{p^{0}}\right \rfloor+\left \lfloor \dfrac{n}{p^{1}}\right \rfloor +\cdots + \left \lfloor \dfrac{n}{p^{m+1}}\right \rfloor} \\
		&\times
		\left[\left \lfloor \dfrac{n}{p}\right \rfloor \right]_{\mathcal{R}(\rho^{p},q^{p})}! \prod_{j=0}^{m-1} \dfrac{	\left[\left \lfloor \dfrac{n}{p^{j+1}}\right \rfloor \right]_{\mathcal{R}(\rho^{p},q^{p})}!}{\left[\left \lfloor \dfrac{n}{p^{j}}\right \rfloor \right]_{\mathcal{R}(\rho^{p},q^{p})}!}\prod_{i=0}^{m}	\Gamma^{p}_{\mathcal{R}(\rho,q)}\left(\left \lfloor \dfrac{n}{p^{i}}\right \rfloor+1 \right)\\
		&=(-1)^\frac{{(n-m_{n})}}{(p-1)}(-1)^{n+1-m}\left([p]_{\mathcal{R}(\rho^{p},q^{p})}\right)^\frac{{(n-m_{n})}}{(p-1)}\left[\left \lfloor 
		\dfrac{n}{p}\right \rfloor \right]_{\mathcal{R}(\rho^{p},q^{p})}!\\
		\nonumber
		&\times \prod_{j=0}^{m-1} \dfrac{\left[\left \lfloor \dfrac{n}{p^{j+1}}\right \rfloor \right]_{\mathcal{R}(\rho^{p},q^{p})}!}{\left[\left \lfloor \dfrac{n}{p^{j}}\right \rfloor \right]_{\mathcal{R}(\rho^{p},q^{p})}!}
		\times \prod_{i=0}^{m}\Gamma_{\mathcal{R}(\rho,q)}^{p} \left(\left[\left \lfloor \dfrac{n}{p^{i}}\right \rfloor \right]+1 \right).		
	\end{align*} 
	This finishes the proof.
\end{proof}

\begin{thm}
	For a prime number $p$ and $m_{n}$ be the sum of digits of $n=\sum_{j=o}^{m} a_{j}p^{j}$ in base $p$ where $n\in \BN$. For $0\leq k \leq m$	 and $j=0,1,\cdots,m$ then the following identity holds: 
	
	\begin{gather}
		\dfrac{\left[\left \lfloor \dfrac{n}{p^{j}}\right \rfloor \right]_{\mathcal{R}(\rho,q)}!}{[p]_{\mathcal{R}(\rho,q)}^{\left \lfloor \dfrac{n}{p^{j}}\right \rfloor}\left[\left \lfloor \dfrac{n}{p^{j}}\right \rfloor \right]_{\mathcal{R}(\rho^{p},q^{p})}!}=\prod_{k=1}^{\left \lfloor \dfrac{n}{p^{j}}\right \rfloor}\dfrac{\rho^{k}- q^{k}}{\rho^{kp}-q^{kp}}.
	\end{gather}
	Consequently we obtain;
	
	\begin{align}
		[n]_{\mathcal{R}(\rho^{p},q^{p})}!&=(-1)^{\frac{n-m_{n}}{(p-1)+n+1-m}}\prod_{k=1}^{\left \lfloor \dfrac{n}{p^{1}}\right \rfloor}\dfrac{\rho^{k}- q^{k}}{\rho^{kp}-q^{kp}} \cdots \prod_{k=1}^{\left \lfloor \dfrac{n}{p^{m}}\right \rfloor }\dfrac{\rho^{k}- q^{k}}{\rho^{kp}-q^{kp}}	\nonumber\\
		&\times \prod_{i=0}^{m}\Gamma_{\mathcal{R}(\rho,q)}^{p} \left(\left \lfloor \dfrac{n}{p^{j}}\right \rfloor +1 \right).		
	\end{align}
\end{thm}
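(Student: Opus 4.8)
The plan is to prove the two displayed formulae in turn. The first is a self-contained product identity that follows from the explicit form of the $(\rho,q)$-numbers together with the product rule of Remark~\ref{4.6}; the second (the ``consequently'' part) is then obtained by substituting the first identity into the factorisation recorded in \eqref{lem 6.10}. Writing $M=\lfloor n/p^{j}\rfloor$ and using $[k]_{\mathcal{R}(\rho,q)}=(\rho^{k}-q^{k})/(\rho-q)$, $[p]_{\mathcal{R}(\rho,q)}=(\rho^{p}-q^{p})/(\rho-q)$ and $[k]_{\mathcal{R}(\rho^{p},q^{p})}=(\rho^{kp}-q^{kp})/(\rho^{p}-q^{p})$, a one-line cancellation yields the termwise relation
\[
\frac{[k]_{\mathcal{R}(\rho,q)}}{[p]_{\mathcal{R}(\rho,q)}\,[k]_{\mathcal{R}(\rho^{p},q^{p})}}=\frac{\rho^{k}-q^{k}}{\rho^{kp}-q^{kp}},
\]
which is consistent with Remark~\ref{4.6}, since there the denominator is just $[kp]_{\mathcal{R}(\rho,q)}$.

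Taking the product of this relation over $k=1,\dots,M$ and invoking $[M]_{\mathcal{R}(\rho,q)}!=\prod_{k=1}^{M}[k]_{\mathcal{R}(\rho,q)}$ together with the corresponding expression for the $\mathcal{R}(\rho^{p},q^{p})$-factorial, the left-hand product collapses to $[M]_{\mathcal{R}(\rho,q)}!\big/\big([p]_{\mathcal{R}(\rho,q)}^{M}[M]_{\mathcal{R}(\rho^{p},q^{p})}!\big)$, which is the first displayed identity. For the consequence I would begin from \eqref{lem 6.10}, which writes $[n]_{\mathcal{R}(\rho^{p},q^{p})}!$ as the sign $(-1)^{n+1-m}$ times the power $(-[p]_{\mathcal{R}(\rho^{p},q^{p})})^{(n-m_{n})/(p-1)}$, the telescoping factorial product $\prod_{j=0}^{m-1}[\lfloor n/p^{j+1}\rfloor]!_{\mathcal{R}(\rho^{p},q^{p})}/[\lfloor n/p^{j}\rfloor]!_{\mathcal{R}(\rho^{p},q^{p})}$, and the gamma product $\prod_{i=0}^{m}\Gamma^{p}_{\mathcal{R}(\rho,q)}(\lfloor n/p^{i}\rfloor+1)$. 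Feeding the first identity into each factorial factor replaces every $\mathcal{R}(\rho^{p},q^{p})$-factorial by the product $\prod_{k}(\rho^{k}-q^{k})/(\rho^{kp}-q^{kp})$ at the relevant truncation level $\lfloor n/p^{j}\rfloor$, producing the chain $\prod_{k=1}^{\lfloor n/p\rfloor}\cdots\prod_{k=1}^{\lfloor n/p^{m}\rfloor}$ displayed in the statement, while the gamma product is carried along unchanged and the two sign contributions merge into $(-1)^{(n-m_{n})/(p-1)+n+1-m}$.

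The elementary part is the first identity; the genuine work lies in the second step, where the powers of $[p]$ must be reconciled. Each application of the first identity injects a factor $[p]_{\mathcal{R}(\rho,q)}^{\lfloor n/p^{j}\rfloor}$, whose total exponent $\sum_{j\ge 1}\lfloor n/p^{j}\rfloor$ equals $(n-m_{n})/(p-1)$ by Legendre's formula; this matches the exponent of the $[p]$-power appearing in \eqref{lem 6.10}, so the two should cancel. The main obstacle I anticipate is carrying out this cancellation rigorously—matching not only the exponents but also the bases $[p]_{\mathcal{R}(\rho,q)}$ and $[p]_{\mathcal{R}(\rho^{p},q^{p})}$ through Remark~\ref{4.6}—and verifying that the sign $(-1)^{(n-m_{n})/(p-1)}$ extracted from $(-[p])^{(n-m_{n})/(p-1)}$ combines cleanly with $(-1)^{n+1-m}$ to yield the stated exponent. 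Once the Legendre count and the sign bookkeeping are pinned down, the remaining manipulations are routine.
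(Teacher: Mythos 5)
Your proof of the first display coincides with the paper's own proof: expand both factorials termwise via $[k]_{\mathcal{R}(\rho,q)}=(\rho^{k}-q^{k})/(\rho-q)$, $[p]_{\mathcal{R}(\rho,q)}=(\rho^{p}-q^{p})/(\rho-q)$, $[k]_{\mathcal{R}(\rho^{p},q^{p})}=(\rho^{kp}-q^{kp})/(\rho^{p}-q^{p})$ and cancel; your implicit correction of Remark~\ref{4.6} (the second factor must be $[p]_{\mathcal{R}(\rho,q)}$, not $[p]_{\mathcal{R}(\rho^{p},q^{p})}$) is also right. That half is fine.

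The gap is exactly the step you postponed, and it is not mere bookkeeping: the cancellation you are relying on fails. Keep in mind that the paper's own proof of the second display consists of the words ``one can easily arrive at,'' so there is no argument there to fall back on. If you substitute the first identity into \eqref{lem 6.10}, you must convert \emph{every} $\mathcal{R}(\rho^{p},q^{p})$-factorial, including the level-$0$ one: it occurs both as the left-hand side and in the $j=0$ denominator of the telescoping product (as printed, \eqref{lem 6.10} in fact carries $[n]_{\mathcal{R}(\rho^{p},q^{p})}!$ on both sides, which is itself a defect you would have to repair). That level-$0$ conversion injects an extra factor $[p]_{\mathcal{R}(\rho,q)}^{\,n}$ beyond the Legendre count $\sum_{j\ge 1}\lfloor n/p^{j}\rfloor=(n-m_{n})/(p-1)$, together with a surviving level-$0$ product $\prod_{k=1}^{n}(\rho^{k}-q^{k})/(\rho^{kp}-q^{kp})$, and neither has a partner to cancel. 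Carrying the computation through honestly --- equivalently, multiplying the paper's recurrence for $\Gamma^{p}_{\mathcal{R}(\rho,q)}$ over the levels $i=0,\dots,m$ and applying your first identity at each level --- what actually comes out is
\begin{align*}
[n]_{\mathcal{R}(\rho,q)}! &=(-1)^{\frac{n-m_{n}}{p-1}+n+1-m}\prod_{j=1}^{m}\,\prod_{k=1}^{\lfloor n/p^{j}\rfloor}\frac{\rho^{kp}-q^{kp}}{\rho^{k}-q^{k}}\;\prod_{i=0}^{m}\Gamma_{\mathcal{R}(\rho,q)}^{p}\left(\left\lfloor \frac{n}{p^{i}}\right\rfloor+1\right),
\end{align*}
that is, the factorial in base $(\rho,q)$, not $(\rho^{p},q^{p})$, and the products inverted relative to the theorem's display.

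Indeed the second display is false as printed, so no completion of your plan can reach it. Take $p=2$, $n=2$ (so $m=1$, $m_{n}=1$): the displayed right-hand side equals $(-1)^{3}\cdot\dfrac{\rho-q}{\rho^{2}-q^{2}}\cdot\Gamma^{p}_{\mathcal{R}(\rho,q)}(3)\,\Gamma^{p}_{\mathcal{R}(\rho,q)}(2)=(-1)\cdot\dfrac{1}{\rho+q}\cdot(-1)=\dfrac{1}{\rho+q}$, while the displayed left-hand side is $[2]_{\mathcal{R}(\rho^{2},q^{2})}!=\rho^{2}+q^{2}$; by contrast, the corrected identity above gives $\rho+q=[2]_{\mathcal{R}(\rho,q)}!$ on both sides. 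So your instinct that ``the genuine work lies in the second step'' was correct, but the outcome of that work is that the exponents and bases do \emph{not} reconcile: a complete write-up must prove the corrected identity (or first repair the statement and \eqref{lem 6.10}), not the formula as displayed.
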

\begin{proof}
	\begin{align*}
		\dfrac{\left[\left \lfloor \dfrac{n}{p^{j}}\right \rfloor \right]_{\mathcal{R}(\rho,q)}!}{[p]_{\mathcal{R}(\rho,q)}^{\left \lfloor \dfrac{n}{p^{j}}\right \rfloor}\left[\left \lfloor \dfrac{n}{p^{j}}\right \rfloor \right]_{\mathcal{R}(\rho^{p},q^{p})}!}&=\dfrac{[1]_{\mathcal{R}(p,q)}[2]_{\mathcal{R}(\rho,q)} \cdots \left[\left \lfloor \dfrac{n}{p^{j}}\right \rfloor \right]_{\mathcal{R}(\rho^{p},q^{p^{j}})}!}{[p]_{\mathcal{R}(\rho,q)}^{\left \lfloor \dfrac{n}{p}\right \rfloor}[1]_{\mathcal{R}(\rho^{p},q^{p})}[2]_{\mathcal{R}(\rho^{p},q^{p})} \cdots\left[\left \lfloor \dfrac{n}{p^{j}}\right \rfloor \right]_{\mathcal{R}(\rho^{p},q^{p})}!}\\
		&= \dfrac{\dfrac{\rho- q}{\rho-q}\dfrac{\rho^{2}- q^{2}}{\rho-q}\cdots \dfrac{\rho^{\left \lfloor \dfrac{n}{p^{j}}\right \rfloor }- q^{\left \lfloor \dfrac{n}{p^{j}}\right \rfloor }}{\rho-q} }{\dfrac{\rho^{p}- q^{p}}{\rho^{p}-q^{p}}\dfrac{\rho^{2p}- q^{2p}}{\rho^{p}-q^{p}}\cdots \dfrac{\rho^{\left \lfloor \dfrac{n}{p^{j}}\right \rfloor p }- q^{\left \lfloor \dfrac{n}{p^{j}}\right \rfloor p }}{\rho-q}}\\
		&=\dfrac{\mathcal{R}(\rho-q)\mathcal{R}(\rho^{2}-q^{2})}{\mathcal{R}(\rho^{p}-q^{p})\mathcal{R}(\rho^{2p}-q^{2p})}
		\\
		& \times \dfrac{\cdots \mathcal{R}\left(\rho^{\left \lfloor \dfrac{n}{p^{j}}\right \rfloor }- q^{\left \lfloor  \dfrac{n}{p^{j}}\right \rfloor }\right)}{\cdots \mathcal{R}\left(\rho^{\left \lfloor \dfrac{n}{p^{j}}\right \rfloor p }- q^{\left \lfloor  \dfrac{n}{p^{j}}\right \rfloor p }\right)}	
	\end{align*}
	for $0\leq j \leq m$. 
	\\
	Next for the $[n]_{\mathcal{R}(\rho^{p},q^{p})}!$ one can easily arrive at 
	
	\begin{align*}
		[n]_{\mathcal{R}(\rho^{p},q^{p})}!&=(-1)^{\frac{n-m_{n}}{(p-1)+n+1-m}}\prod_{k=1}^{\left \lfloor \dfrac{n}{p^{1}}\right \rfloor}\dfrac{\rho^{k}- q^{k}}{\rho^{kp}-q^{kp}} \cdots \prod_{k=1}^{\left \lfloor \dfrac{n}{p^{m}}\right \rfloor }\dfrac{\rho^{k}- q^{k}}{\rho^{kp}-q^{kp}}\\
		\nonumber
		&\times \prod_{i=0}^{m}\Gamma_{\mathcal{R}(\rho,q)}^{p} \left(\left \lfloor \dfrac{n}{p^{j}}\right \rfloor +1 \right),		
	\end{align*}
	thus the proof is completed.
\end{proof}
\subsection{$p-$adic $\mathcal{R}(p,q)-$beta functions}
The $p-$adic beta function $\beta_{p}:\BZ_{p} \times \BZ_{p} \longrightarrow \mathbb{Q}_{p}$ is given by: 
\begin{gather}
	\beta_{p}(x+y)=\dfrac{\Gamma_{p}(x) \Gamma_{p}(y) }{\Gamma_{p}(x+y)}
\end{gather}
for $x,y\in \BZ_{p}.$
For the purpose of notation we shall denote the $p-$adic beta function as follows:
\begin{gather}
	\beta^{p}(x+y)=\dfrac{\Gamma^{p}(x) \Gamma^{p}(y) }{\Gamma^{p}(x+y)}.
\end{gather}

\begin{defn}\label{def6.9}
	Let $\rho$ and $q\in \BC_{p}$ with 
	$|\rho -1|_{p}<1$ and  $|q-1|_{p}<1,$
	$\rho\neq1, \quad q\neq 1.$ We define the $p-$adic $\mathcal{R}(\rho,q)-$beta function via the $p-$adic $\mathcal{R}(\rho,q)-$gamma functions as follows:
	\begin{gather}
		\beta_{\mathcal{R}(\rho,q)}^{p}(x,y)=\dfrac{\Gamma_{\mathcal{R}(\rho,q)}^{p}(x) \Gamma_{\mathcal{R}(\rho,q)}^{p}(y) }{\Gamma_{\mathcal{R}(\rho,q)}^{p}(x+y)}
	\end{gather}
	for $x,y\in \BZ_{p}.$
\end{defn}

\begin{thm} 	
	The $\mathcal{R}(\rho,q)-$beta functions have the following properties:
	\begin{enumerate}
		\item [(i)] $\beta_{\mathcal{R}(\rho,q)}^{p}(x,y+1)=	\dfrac{\delta^{p}_{\mathcal{R}(\rho,q)}(y)}{	\delta^{p}_{\mathcal{R}(\rho,q)}(x+y)}\beta_{\mathcal{R}(\rho,q)}^{p}(x,y)$
		\\
		\item [(ii)] $\beta_{\mathcal{R}(\rho,q)}^{p}(x+1,y)=\dfrac{\delta^{p}_{\mathcal{R}(\rho,q)}(x)}{\delta^{p}_{\mathcal{R}(\rho,q)}(x+y)}\beta_{\mathcal{R}(\rho,q)}^{p}(x,y)$
		\\
		\item [(iii)] $\beta_{\mathcal{R}(\rho,q)}^{p}(x+1,y)=\dfrac{\delta^{p}_{\mathcal{R}(\rho,q)}(x)}{\delta^{p}_{\mathcal{R}(\rho,q)}(y)}\beta_{\mathcal{R}(\rho,q)}^{p}(x,y+1)$
		\item [(v)] $\beta_{\mathcal{R}(\rho,q)}^{p}(x+1,y)+\beta_{\mathcal{R}(\rho,q)}^{p}(x,y+1)=\dfrac{\delta^{p}_{\mathcal{R}(\rho,q)}(x) + \delta^{p}_{\mathcal{R}(\rho,q)}(y)}{\delta^{p}_{\mathcal{R}(\rho,q)}(x+y)}\beta_{\mathcal{R}(\rho,q)}^{p}(x,y)$
		\\
		\item[(vi)] $\beta_{\mathcal{R}(\rho,q)}^{p}(x+1,y+1)=\dfrac{\delta^{p}_{\mathcal{R}(\rho,q)}(x)+ \delta^{p}_{\mathcal{R}(\rho,q)}(y)}{\delta^{p}_{\mathcal{R}(\rho,q)}(x+y+1)\delta^{p}_{\mathcal{R}(\rho,q)}(x+y)}\\
		\times \beta_{\mathcal{R}(\rho,q)}^{p}(x,y)$
		\\
		\item [(vii)] $\beta_{\mathcal{R}(\rho,q)}^{p}(x,y)+\beta_{\mathcal{R}(\rho,q)}^{p}(x+y,z)+\beta_{\mathcal{R}(\rho,q)}^{p}(x+y+z,w)\\ \nonumber
		=\dfrac{\Gamma_{\mathcal{R}(\rho,q)}^{p}(x) \Gamma_{\mathcal{R}(\rho,q)}^{p}(y)\Gamma_{\mathcal{R}(\rho,q)}^{p}(z)\Gamma_{\mathcal{R}(\rho,q)}^{p}(w) }{\Gamma_{\mathcal{R}(\rho,q)}^{p}(x+y+z+w)}$
		\\
		\item [(viii)] $\beta_{\mathcal{R}(\rho,q)}^{p}(x,1-x)=-\Gamma_{\mathcal{R}(\rho,q)}^{p}{(x)}\Gamma_{\mathcal{R}(\rho,q)}^{p}(1-x).$
	\end{enumerate}
\end{thm}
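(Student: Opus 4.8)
The plan is to derive every one of (i)--(viii) from a single input: the functional equation $\Gamma_{\mathcal{R}(\rho,q)}^{p}(z+1)=\delta^{p}_{\mathcal{R}(\rho,q)}(z)\,\Gamma_{\mathcal{R}(\rho,q)}^{p}(z)$ proved above, combined with the defining relation $\beta_{\mathcal{R}(\rho,q)}^{p}(x,y)=\Gamma_{\mathcal{R}(\rho,q)}^{p}(x)\Gamma_{\mathcal{R}(\rho,q)}^{p}(y)/\Gamma_{\mathcal{R}(\rho,q)}^{p}(x+y)$ of Definition~\ref{def6.9} and the special values of Lemma~\ref{4.5}. Because each argument appearing on the left of an identity differs from the one on the right only by an integer shift, no $p$-adic estimates are required; the entire theorem reduces to substituting the recurrence and cancelling the common gamma-factors.

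First I would prove (i). Expanding $\beta_{\mathcal{R}(\rho,q)}^{p}(x,y+1)$ by Definition~\ref{def6.9} and applying the recurrence to $\Gamma_{\mathcal{R}(\rho,q)}^{p}(y+1)=\delta^{p}_{\mathcal{R}(\rho,q)}(y)\,\Gamma_{\mathcal{R}(\rho,q)}^{p}(y)$ in the numerator and to $\Gamma_{\mathcal{R}(\rho,q)}^{p}(x+y+1)=\delta^{p}_{\mathcal{R}(\rho,q)}(x+y)\,\Gamma_{\mathcal{R}(\rho,q)}^{p}(x+y)$ in the denominator, the leftover ratio is exactly $\delta^{p}_{\mathcal{R}(\rho,q)}(y)/\delta^{p}_{\mathcal{R}(\rho,q)}(x+y)$ times $\beta_{\mathcal{R}(\rho,q)}^{p}(x,y)$. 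Identity (ii) is the same computation with the two arguments interchanged. Dividing (ii) by (i) cancels $\beta_{\mathcal{R}(\rho,q)}^{p}(x,y)$ and the common factor $\delta^{p}_{\mathcal{R}(\rho,q)}(x+y)$, which gives (iii); adding (i) and (ii) and factoring out $\beta_{\mathcal{R}(\rho,q)}^{p}(x,y)/\delta^{p}_{\mathcal{R}(\rho,q)}(x+y)$ gives (v).

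For (vi) I would iterate these two one-step relations: apply (ii) to move from $\beta_{\mathcal{R}(\rho,q)}^{p}(x,y)$ to $\beta_{\mathcal{R}(\rho,q)}^{p}(x+1,y)$, then apply (i) to move from $\beta_{\mathcal{R}(\rho,q)}^{p}(x+1,y)$ to $\beta_{\mathcal{R}(\rho,q)}^{p}(x+1,y+1)$, so that the shift-factors $\delta^{p}_{\mathcal{R}(\rho,q)}(x)$ and $\delta^{p}_{\mathcal{R}(\rho,q)}(y)$ combine in the numerator while the denominator collects $\delta^{p}_{\mathcal{R}(\rho,q)}(x+y+1)\,\delta^{p}_{\mathcal{R}(\rho,q)}(x+y)$. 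Identity (vii) is a telescoping phenomenon: forming the product of the three beta-values and substituting Definition~\ref{def6.9}, the intermediate factors $\Gamma_{\mathcal{R}(\rho,q)}^{p}(x+y)$ and $\Gamma_{\mathcal{R}(\rho,q)}^{p}(x+y+z)$ each occur once in a numerator and once in a denominator and cancel, leaving precisely $\Gamma_{\mathcal{R}(\rho,q)}^{p}(x)\Gamma_{\mathcal{R}(\rho,q)}^{p}(y)\Gamma_{\mathcal{R}(\rho,q)}^{p}(z)\Gamma_{\mathcal{R}(\rho,q)}^{p}(w)/\Gamma_{\mathcal{R}(\rho,q)}^{p}(x+y+z+w)$. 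Finally (viii) is immediate from the definition: $\beta_{\mathcal{R}(\rho,q)}^{p}(x,1-x)=\Gamma_{\mathcal{R}(\rho,q)}^{p}(x)\Gamma_{\mathcal{R}(\rho,q)}^{p}(1-x)/\Gamma_{\mathcal{R}(\rho,q)}^{p}(1)$, and since $\Gamma_{\mathcal{R}(\rho,q)}^{p}(1)=-1$ by Lemma~\ref{4.5}, the stated minus sign appears.

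I do not anticipate a deep obstacle, and the point that demands the most care is the piecewise definition of $\delta^{p}_{\mathcal{R}(\rho,q)}$, whose value branches according to whether $|z|_{p}=0$ or $|z|_{p}<1$. Every identity above is obtained \emph{formally} from the recurrence, so it is valid for all $z\in\BZ_{p}$ independently of which branch is active; what one must verify at each application is only that the shifted arguments $y,\ x+y,\ x+y+1,\dots$ still lie in $\BZ_{p}$, which holds because $\BZ_{p}$ is closed under addition, so that the recurrence is legitimately applicable at each step. Accordingly I would write out (i) and (ii) in full and then present (iii), (v), (vi), (vii) and (viii) as one-line consequences of these two relations together with the telescoping argument and the special value $\Gamma_{\mathcal{R}(\rho,q)}^{p}(1)=-1$.
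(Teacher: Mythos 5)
Your proposal is correct and follows essentially the same route as the paper: expand via Definition~\ref{def6.9}, apply the recurrence $\Gamma_{\mathcal{R}(\rho,q)}^{p}(z+1)=\delta^{p}_{\mathcal{R}(\rho,q)}(z)\,\Gamma_{\mathcal{R}(\rho,q)}^{p}(z)$ to numerator and denominator for (i)--(ii), derive (iii), (v), (vi) by combining these, telescope the gamma factors for (vii), and invoke $\Gamma_{\mathcal{R}(\rho,q)}^{p}(1)=-1$ from Lemma~\ref{4.5} for (viii). Note that, like the paper's own proof, you silently read the sums in (vii) as a product of the three beta values (the telescoping only works multiplicatively), and your remark on the branch structure of $\delta^{p}_{\mathcal{R}(\rho,q)}$ is a point of care the paper does not address.
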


\begin{proof}
	For $(i)$ we have 
	\begin{align*}
		\beta_{\mathcal{R}(\rho,q)}^{p}(x,y+1)&=\dfrac{\Gamma_{\mathcal{R}(\rho,q)}^{p}(x) \Gamma_{\mathcal{R}(\rho,q)}^{p}(y+1) }{\Gamma_{\mathcal{R}(\rho,q)}^{p}(x+y+1)}\\
		&=\dfrac{\Gamma_{\mathcal{R}(\rho,q)}^{p}(x)\delta^{p}_{\mathcal{R}(\rho,q)}(y) \Gamma_{\mathcal{R}(\rho,q)}^{p}(y) }{\delta^{p}_{\mathcal{R}(\rho,q)}(x+y)\Gamma_{\mathcal{R}(\rho,q)}^{p}(x+y)}\\
		&=\dfrac{\delta^{p}_{\mathcal{R}(\rho,q)}(y)}{\delta^{p}_{\mathcal{R}(\rho,q)}
			(x+y)}\left( \dfrac{\Gamma_{\mathcal{R}(\rho,q)}^{p}(x)\Gamma_{\mathcal{R}(\rho,q)}^{p}(y)}{\Gamma_{\mathcal{R}(\rho,q)}^{p}(x+y)}\right)\\
		&=\dfrac{\delta^{p}_{\mathcal{R}(\rho,q)}(y)}{	\delta^{p}_{\mathcal{R}(\rho,q)}(x+y)}\beta_{\mathcal{R}(\rho,q)}^{p}(x,y),
	\end{align*}
	Similarly for $(ii)$ we get
	\begin{align*}
		\beta_{\mathcal{R}(\rho,q)}^{p}(x+1,y)&=\dfrac{\Gamma_{\mathcal{R}(\rho,q)}^{p}(x+1) \Gamma_{\mathcal{R}(\rho,q)}^{p}(y) }{\Gamma_{\mathcal{R}(\rho,q)}^{p}(x+y+1)}\\
		&=\dfrac{\delta^{p}_{\mathcal{R}(\rho,q)}[x]_{\mathcal{R}(\rho,q)} \Gamma_{\mathcal{R}(\rho,q)}^{p}(x)\Gamma_{\mathcal{R}(\rho,q)}^{p}(y) }{\delta^{p}_{\mathcal{R}(\rho,q)}[x+y]_{\mathcal{R}(\rho,q)}\Gamma_{\mathcal{R}(\rho,q)}^{p}(x+y)}\\
		&=\dfrac{\delta^{p}_{\mathcal{R}(\rho,q)}(x)}{\delta^{p}_{\mathcal{R}(\rho,q)}(x+y)}\left( \dfrac{\Gamma_{\mathcal{R}(\rho,q)}^{p}(x)\Gamma_{\mathcal{R}(\rho,q)}^{p}(y)}{\Gamma_{\mathcal{R}(\rho,q)}^{p}(x+y)}\right)\\
		&=\dfrac{\delta^{p}_{\mathcal{R}(\rho,q)}(x)}{	\delta^{p}_{\mathcal{R}(\rho,q)}(x+y)}\beta_{\mathcal{R}(\rho,q)}^{p}(x,y),
	\end{align*}
	furthermore, one can simplify;
	
	\begin{align*}
		\beta_{\mathcal{R}(\rho,q)}^{p}(x+1,y)&=\dfrac{\delta^{p}_{\mathcal{R}(\rho,q)}(x)}{	\delta^{p}_{\mathcal{R}(\rho,q)}(x+y)}\beta_{\mathcal{R}(\rho,q)}^{p}(x,y)
		\\
		&=\dfrac{\delta^{p}_{\mathcal{R}(\rho,q)}(x)}{\delta^{p}_{\mathcal{R}(\rho,q)}(y)}\left(\dfrac{\delta^{p}_{\mathcal{R}(\rho,q)}(y)}{	\delta^{p}_{\mathcal{R}(\rho,q)}(x+y)}\beta_{\mathcal{R}(\rho,q)}^{p}(x,y)\right)
		\\
		&=\dfrac{\delta^{p}_{\mathcal{R}(\rho,q)}(x)}{\delta^{p}_{\mathcal{R}(\rho,q)}(y)}\beta_{\mathcal{R}(\rho,q)}^{p}(x,y+1).
	\end{align*}
	The proof of $(v)$ follows from $(i)$ and $(ii)$ easily. Also, proof of $(vi)$ is trivial.
	For $(vii)$ we have;
	
	\begin{align*}
		&\beta_{\mathcal{R}(\rho,q)}^{p}(x,y)+\beta_{\mathcal{R}(\rho,q)}^{p}(x+y,z)+\beta_{\mathcal{R}(\rho,q)}^{p}(x+y+z,w)\\
		&=\dfrac{\Gamma_{\mathcal{R}(\rho,q)}^{p}(x)\Gamma_{\mathcal{R}(\rho,q)}^{p}(y)}{\Gamma_{\mathcal{R}(\rho,q)}^{p}(x+y)}\dfrac{\Gamma_{\mathcal{R}(\rho,q)}^{p}(x+y)\Gamma_{\mathcal{R}(\rho,q)}^{p}(z)}{\Gamma_{\mathcal{R}(\rho,q)}^{p}(x+y+z)}\\
		&\times	\dfrac{\Gamma_{\mathcal{R}(\rho,q)}^{p}(x+y+z)\Gamma_{\mathcal{R}(\rho,q)}^{p}(w)}{\Gamma_{\mathcal{R}(\rho,q)}^{p}(x+y+z+w)}\\
		&=\dfrac{\Gamma_{\mathcal{R}(\rho,q)}^{p}(x)\Gamma_{\mathcal{R}(\rho,q)}^{p}(y) \Gamma_{\mathcal{R}(\rho,q)}^{p}(x+y)\Gamma_{\mathcal{R}(\rho,q)}^{p}(z) }{\Gamma_{\mathcal{R}(\rho,q)}^{p}(x+y)\Gamma_{\mathcal{R}(\rho,q)}^{p}(x+y+z)}\\
		&\times \dfrac{\Gamma_{\mathcal{R}(\rho,q)}^{p}(x+y+z)\Gamma_{\mathcal{R}(\rho,q)}^{p}(z)\Gamma_{\mathcal{R}(\rho,q)}^{p}(w)}{\Gamma_{\mathcal{R}(\rho,q)}^{p}(x+y+z+w)}\\
		&=\dfrac{\Gamma_{\mathcal{R}(\rho,q)}^{p}(x) \Gamma_{\mathcal{R}(\rho,q)}^{p}(y)\Gamma_{\mathcal{R}(\rho,q)}^{p}(z)\Gamma_{\mathcal{R}(\rho,q)}^{p}(w) }{\Gamma_{\mathcal{R}(\rho,q)}^{p}(x+y+z+w)}
	\end{align*}
	Finally, from definition \ref{def6.9} we have;
	\begin{align*}
		\beta_{\mathcal{R}(\rho,q)}^{p}(x,1-x)&=\dfrac{\Gamma_{\mathcal{R}(\rho,q)}^{p}(x) \Gamma_{\mathcal{R}(\rho,q)}^{p}(1-x) }{\Gamma_{\mathcal{R}(\rho,q)}^{p}(x+1-x)}\\&=\dfrac{\Gamma_{\mathcal{R}(\rho,q)}^{p}(x) \Gamma_{\mathcal{R}(\rho,q)}^{p}(1-x) }{\Gamma_{\mathcal{R}(\rho,q)}^{p}(1)}\\
		&=-\Gamma_{\mathcal{R}(\rho,q)}^{p}{(x)}\Gamma_{\mathcal{R}(\rho,q)}^{p}(1-x),
	\end{align*}
	which concludes the proof using lemma \ref{4.5}.
\end{proof}

\section{Concluding remarks}\label{rem}
In this paper, we presented the Hounkonnou $\textit{et al.}$ $\mathcal{R}(p,q)-$exponential functions and  $\mathcal{R}(p,q)-$trigonometric functions in a formal way. The  $\mathcal{R}(p,q)-$ analogue of the Euler and Bernoulli polynomials and their numbers were established using these functions. In addition to defining the $\mathcal{R}(p,q)$-analogue for the Euler-zigzag numbers, we also extended the $\mathcal{R}(p,q)-$integration to the beta and gamma functions and offered immediate consequences like the integration by part, as well as different aspects of the gamma and beta functions, by applying the $\mathcal{R}(p,q)-$derivatives established.
Furthermore, we defined the Hounkounnou $\textit{et al.}$ $\mathcal{R}(p,q)-$power basis and, together with it, introduced significant concepts such as the $\mathcal{R}(p,q)-$Taylor expansion and many others.
Then, we established the definition of a $p-$adic fermion spin Lie group and its corresponding Lie algebra, and we showed that the $S_{+}$, $S_{-}$, and $S_{z}$ generate the Iwasawa Lie algebra.
The principal congruence subgroups and the $p$-adic fermion Lie algebra are related. We focused on this relation and demonstrated that the $p-$adic zeta integral of an elementary particle such as the fermionic $p-$adic spin Lie algebra ($\spin_{\BZ_{p}}(\frac{1}{2})$) is unique up to equivalence, also, we showed that every fermionic ghost polynomial; $\GO_{2l+1}$, $\GSp_{2l}$ and $\GO_{2l}$ of type $\Pi (B_{l})$, $\Pi (C_{l})$ and $\Pi( D_{l})$ has a friendly ghost.
The Volkerborn, Bernoulli, and Genocchi polynomials, as well as their corresponding $\mathcal{R}(p,q)-$analogues, are some features of the T. Kim \cite{ref62} that we also took into consideration.
Finally, the $\mathcal{R}(p,q)-$beta function and the $\mathcal{R}(p,q)-$gamma function are then extended to the $p-$adic $\mathcal{R}(\rho,q)-$beta function and $\mathcal{R}(\rho,q)-$gamma function, respectively.
\bigskip
\subsection*{Acknowledgment}
This work is supported by the NLAGA-SIMONS grant.
\\
 The ICMPA - UNESCO chair is in partnership with Daniel Lagolnitzer Foundation(DIF), France, and the Association pour la Promotion Scientifique de l'Afrique (APSA), supporting the development of Mathematical Physics in Africa. The authors are grateful to anonymous referees for careful reading of the manuscript and helpful comments.


%

\end{document}